\newcommand{\R}{{\mathbb {R}}}
\newcommand{\Z}{{\mathbb Z}}
\newtheorem{Theorem}{Theorem}
\newtheorem {lemma} [Theorem]    {Lemma}
\newtheorem {corollary}  [Theorem]    {Corollary}
\newtheorem {proposition}[Theorem]    {Proposition}
\newtheorem {theorem}[Theorem]    {Theorem}
\newtheorem{remark}[Theorem]{Remark}
\newcommand {\eps}{\varepsilon}
\title[A switching identity for cable-graph loop-soups]{A switching identity for cable-graph loop soups and Gaussian free fields}
\author{Wendelin Werner}
\address{University of Cambridge}
\begin{document}

\begin {abstract}
We derive a ``switching identity'' that can  be stated for critical Brownian loop-soups or for the Gaussian free field on a cable graph: It basically says that at the level of cluster configurations and at the more general level of the occupation time fields,  conditioning two points on the cable-graph to belong to the same cluster of Brownian loops (or equivalently to the same sign-cluster of the GFF) amounts to adding a random odd number of independent Brownian excursions between these points to an otherwise unconditioned configuration. Such explicit simple descriptions of the conditional law of the clusters when a connection occurs have various direct consequences, in particular about the large scale behaviour of these sign-clusters on infinite graphs.
\end {abstract}

\maketitle 
\section {Introduction}

\subsection* {Sketchy overview}
The main purpose of this paper is to derive a new result for a particular percolation model (critical percolation of Brownian loops on cable-graphs), which is basically a collection of random non-interacting Brownian loops on the graph (with infinitely many small loops in any portion of the graph) that is of some significance in Physics (as we shall see, the result can be also formulated in terms of the Gaussian free field). We will in particular see that if one conditions two given points $x$ and $y$ to be in the same percolation cluster, then the conditional law of this cluster can be described in terms of the cluster containing $x$ (and $y$) of the overlay of an unconditioned percolation of loops configuration with an odd number $N$ of Brownian excursions joining $x$ and $y$. The number $N$ will typically be equal to one when $x$ and $y$ are very far apart (or exactly equal to one when the previous conditioning was on $x$ and $y$ to be on the boundary of the same cluster).  So, in a nutshell: In this web of independent Brownian loops, conditioning $x$ and $y$ to be connected amounts exactly to adding an odd number of independent paths joining these two points to the unconditioned picture (see the sketch below).

\begin{figure*}[h]
  \centering
  \includegraphics[width=.9\textwidth]{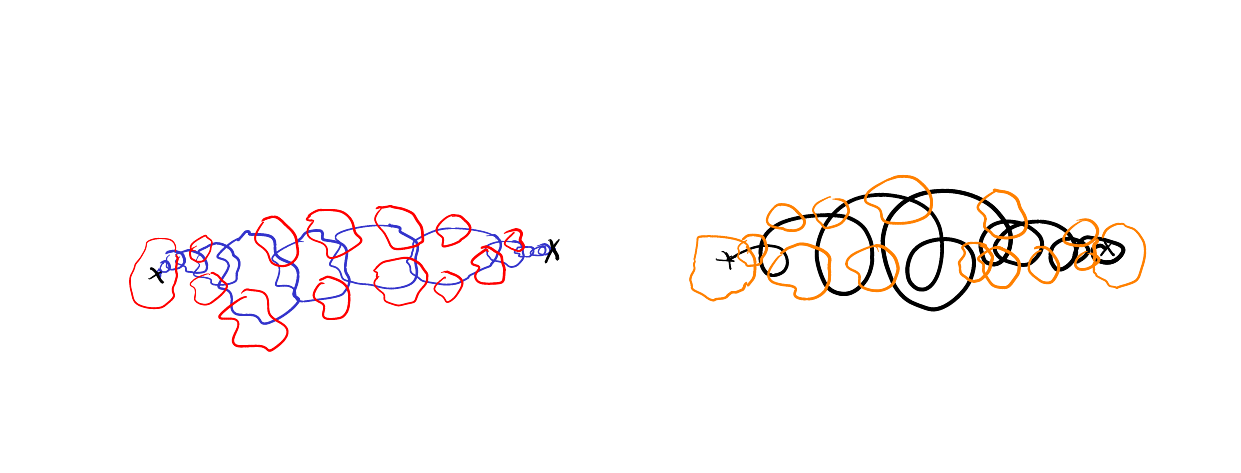}
  \caption{\label{fig} The two sketched constructions (conditioning on connecting $x$ and $y$ vs. just adding one excursion between $x$ and $y$) give the same measure on clusters.}
  \end{figure*}

  This result may at first sight seem quite surprising and counterintuitive. Indeed, connections in loop-soups are created by chains of Brownian loops so that, when a connection occurs between $x$ and $y$, one would rather think of two (or more generally an even number of) connecting paths between these two points. On the other hand, adding an odd number of paths to a loop-soup would create an odd total number of such connecting paths. Some version of this first fact (i.e., that one has an even number of paths) will actually show up in the proof of the general theorem.  Our result however says that the cluster can nevertheless equivalently be viewed as an odd number of excursions joining $x$ to $y$ with a bunch of independent Brownian loops agglomerated to it. The reader is encouraged to scribble little pictures to see that such a bijection is far from obvious and cannot be obtained by finitely many simple ``rewirings/recombinations'' between loops and excursions.

Besides of their own independent interest (this is related to the general question of how ``bosonic fields transmit forces''), our switching results have consequences in a number of directions.
One example is that when fixing $x$ and sending the point $y$ to infinity (in a graph like $\Z^d$ for any $d \ge 3$, but this holds also on most infinite graphs), one gets a direct construction and a strikingly simple description of the ``incipient infinite measure'' for this loop-percolation model (i.e., the law of the percolation ``conditioned on $x$ being in an infinite cluster'') as the overlay of one unconditioned loop percolation configuration with one Brownian excursion from $x$ to infinity.

\subsection*{Background}
Before properly stating our main results, let us spend two pages to review some basic and/or relevant facts about the Gaussian free field and the (critical) Brownian loop soups on  cable graphs, following results of Le Jan, Lupu and others (for background, we refer to \cite {WP}). The reader acquainted with those can safely jump to the {\em Switching Property} section.

Through most of this paper, we consider a connected cable-graph $G$ (with a possibly non-empty boundary) on which the Green's function is finite (this condition is in fact not really needed, but it will make the exposition simpler -- see Remark \ref {compact}). This includes cable-graphs such as the union of all the edges of $\Z^d$ (viewed as segments in $\R^d$) for $d \ge 3$ or any connected proper subset of the union of the edges of $\Z^d$ for $d \ge 1$.

Our graphs will also sometimes implicitly contain boundary points (corresponding to the fact that we take the Green's function of the Laplacian with Dirichlet boundary condition at those points -- i.e., the Brownian motion on the cable graph will be killed at those points). This will in particular be the case when we will mention loop-soups in $G \setminus \{ x \}$ (or $G \setminus \{ x,y \}$), where $\{ x \}$ (resp. $\{x,y\}$) will then be part of the boundary.

\begin {itemize}
\item (Cable-graph GFF).
The cable-graph Gaussian free field (GFF) is a random function $(\Gamma (x), x \in G)$ that can be viewed as the natural generalization of Brownian motion when one replaces the time-axis by $G$ (and one conditions it to be equal to $0$ on the points of $\partial G$, and adds some conditions at infinity if the graph is infinite). One simple way to turn this intuition into a rigorous definition is to say that $(\Gamma (x), x \in G)$ is a centered real-valued Gaussian process defined on $G$ with covariance function given by the Green's function on the cable-graph. When one restricts this cable-graph GFF to the discrete set of vertices of the graph, then one obtains an instance of the discrete GFF, which has been a basic building block of Quantum Field Theory for many decades (the discrete GFF is sometimes referred to as the bosonic free field in the Physics literature).

Looking at the cable-graph GFF allows to use Brownian-type features such as the reflection principle that are not so well-suited to discrete graphs. In this context, it is extremely natural to define
the zero-set $Z(\Gamma)= \{ x \in G, \ \Gamma (x) = 0 \}$ of $\Gamma$.
A {\em sign-cluster of $\Gamma$} is then defined to be a connected component of $G \setminus Z$.  In other words, two points $x$ and $y$ on the cable graph are in the same sign-cluster  if there exists a continuous path $\gamma : [0, 1] \to G$ with $\gamma (0) = x$, $\gamma (1) = y$ and $\Gamma (\gamma(s)) \not= 0 $ for all $s \in [0,1]$. We will denote this event by $x \leftrightarrow y$. Since sign-clusters are determined by $Z(\Gamma)$, they can also be viewed as functions of the square of $\Gamma$, or as functions of $|\Gamma|$. The aforementioned reflection principle states essentially that one can resample independently the sign of the GFF on each of the sign-clusters without changing its law. This in particular implies immediately that
$E[ \Gamma(x) \Gamma(y)] = E [ \Gamma(x) \Gamma(y) 1_{x \leftrightarrow y }]$ (because when $x \not\leftrightarrow y$, then the signs of $\Gamma(x)$ and $\Gamma(y)$ are chosen independently). This line of thought led Lupu \cite {L1} to derive a simple expression for $P [ x \leftrightarrow y \mid  \Gamma^2 (x)=a^2, \Gamma^2(y) =b^2 ]$ in terms of the densities of $(\Gamma(x), \Gamma(y))$ at $(a,b)$ and $(a, -b)$, and then a simple expression for $P[x \leftrightarrow y]$ by integrating over the law of $(\Gamma(x), \Gamma(y))$. In particular, in $\Z^d$ for $d \ge 3$, this probability decays like a constant times $1/\|x-y\|^{d-2}$ as $\|x-y\| \to \infty$. This formula has been the basis of a number of subsequent work on the sign-clusters.

\item (Brownian loop-soup and Brownian excursions).
One can define Brownian motion on the cable-graph -- which is not to be confused with the GFF (the latter is a function from $G$ to $\R$ while the former is a function from a subset of $\R$ into $G$). This is a time-parameterized process that moves on the cable-graph, just like one-dimensional Brownian motion does when it is in some edge (i.e., not at a vertex where several edges meet), and when it is at a vertex $x$ where several edges meet, then (and this definition can be easily made rigorous using basic excursion theory for Brownian motion), it chooses one of its adjacent edges uniformly at random to start each excursion away from $x$.
One can then define the natural variants of Brownian motion, namely the Brownian excursion measures and the Brownian loop measures.
For any two points $x$ and $y$, the  ``Brownian excursion measure'' $\nu_{x, y, G}$ from $x$ to $y$ in $G$ is then a measure  on the set of paths $\beta: [0, \tau_\beta] \to G$ with $\beta(0) = x$, $\beta (\tau) = y$ and $\beta (0,\tau) \subset G \setminus \{ x, y\}$. When $x=y$, this is an infinite measure (with an infinite mass on the set of small excursions), while when $x \not= y$, it is a finite measure (that can therefore be renormalized to be a probability measure). All these measures are naturally related to basic features of the cable-graph $G$ viewed as an electric network.

The Brownian loop-measure in the cable-graph is an infinite measure on loops (with infinite mass put on very small loops). This measure is then used to define our main player here, namely the Brownian loop-soup (in the terminology of \cite {LW}), which is a Poisson point process of Brownian loops in the cable-graph with intensity given by the Brownian loop measure  [throughout the present paper, all these loop-soups will be the ones with this particular intensity, i.e., the multiplicative factor in front of this intensity will be fixed  -- the loop-soups with this intensity are sometimes referred to as the {\em critical} ones]. Brownian loop-measures and loop-soups were introduced and used in the continuum setting \cite {LW,SW}, then in the discrete case \cite {LT,LJ1} and in cable graphs \cite {L1}.
\end {itemize}

The Brownian loop-soup on the cable graph turns out to be directly related to the GFF in the following way: If for each $x \in G$, one adds up the local time at $x$ of all the loops in the loop-soup, then one obtains the {\em occupation time field} $(\Lambda (x), x \in G)$ of the loop-soup. It turns out (as observed by Le~Jan \cite {LJ1,LJ2} -- see \cite {LJ3} for an extensive overview and more references) that $(\Lambda (x), x \in G)$ is distributed exactly like $(\Gamma^2(x), x \in G)$. Furthermore \cite {L1}, the sign-clusters of $\Lambda$ are then exactly the connected components of the union of all the loops in the loop-soup and the zero-set of $\Lambda$ is exactly the set of exceptional points that are visited by no loop. In this way, sign-clusters of the GFF are just ``clusters of Brownian loops'' and the aforementioned formula by Lupu for $P[ x \leftrightarrow y]$ is the two-point function for percolation of Brownian loops.

One can therefore construct a GFF out of a Brownian loop-soup by first defining its square to be equal to $\Lambda$, and by then choosing a sign at random independently for each of cluster of Brownian loops. Through most of the present paper, we will implicitly use this construction/coupling, i.e., we will always have $\Lambda  = \Gamma^2$.

Many of the striking features of the GFF, such as its spatial Markov property or Dynkin's isomorphism can be revisited and reinterpreted using the Brownian loop-soup and its own striking combinatorial properties, in particular its rewiring properties studied in \cite {W1}. It is worth noting that historically, many results had been derived along the lines of Dynkin's isomorphism (see e.g. the monograph \cite {MR} and the references therein) before mathematicians realized the direct connection to loop-soups. Let us now very briefly describe how to interpret and derive Dynkin's isomorphism using this GFF/loop-soup correspondence, since related ideas will appear in this paper: Suppose that one conditions the value of $\Lambda (x)$ to be equal to $a^2$. On the GFF side, this means that one conditions $\Gamma(x)$ to be equal to $\pm a$, and the Markov property of the GFF then says that the conditional law of $\Gamma^2$ is that of $ ( \Gamma_0 + \Phi)^2$, where $\Gamma_0$ is a GFF conditioned to be $0$ at $x$, while $\Phi$ is the harmonic function in $G \setminus \{x\}$ with value $a$ at $x$ (and appropriate behaviour at infinity). On the loop-soup side, this conditioning only affects the loops that go through $x$. The union of those loops then become a Poisson point process ${\mathcal E}$ of excursions away from $x$ with intensity $a^2 \nu_{x,x}$ while the remaining collection of loops is still a loop-soup in $G \setminus \{ x \}$ (with occupation times distributed like $\Gamma_0^2$). Dynkin's isomorphism is the statement that these two descriptions of the conditional law of $\Lambda=\Gamma^2$ are the same, i.e. that the law of $ ( \Gamma_0 + \Phi)^2$ is the same as the law of the sum of $\Gamma_0^2$ with the occupation time field of ${\mathcal E}$.

Throughout the paper (and we have already done so a couple of times), conditioning on probability zero events (such as that the value at $x$ of $\Gamma$ is equal to some value, or that $x$ is on the boundary of a cluster) is easily made sense of because of the relation of the cable-graph GFF with one-dimension Brownian motion (or using the Gaussian framework).

The paper \cite {W2} outlined some results and conjectures about the geometry of large sign clusters in $\Z^d$, and how these features depend on the spatial dimension $d$.
In this direction, it is worth mentioning that intuitively speaking, when the dimension $d$ is large, the longer loops in a Brownian loop-soup become much less frequent, so that one expects the picture to resemble more that of ordinary critical percolation in high dimensions, whereas when $d \le 5$, the large loops will play an important role in the appearance of large clusters, so that one is in a very different universality class than critical Bernoulli percolation. Indeed, when $d=2$, it has been shown that the loop-soup clusters (and sign-clusters) can be described via CLE$_4$ loop ensembles (and their variants) in the scaling limit \cite {L2,ALS}.
There has been a fairly intense recent activity in the study of the large-scale properties (and exponents) of the sign clusters when $d \ge 3$ (see for instance \cite {LW3,DPR1,P,DW} -- and in 2024 alone, it has been the main focus of \cite {CD1,CD2,CD3,GN,DPR1,DPR2,GJ}).

\subsection*{The switching properties}

We are now ready to state our main results.
We start with a consequence of our more general theorem that can be formulated in terms of loop-soup clusters only (i.e., without reference to occupation time fields):
Let us consider a loop-soup in $G$ and choose two given different points $x$ and $y$ in $G$.
Note that the Poissonian nature of the loop-soup shows that conditioning two points $x$ and $y$ not to be visited by any loop just amounts to erasing all the loops that go through $x$ or $y$ (i.e., to consider a loop-soup ${\mathcal L}$ in $G \setminus \{ x,y\}$). It is then  also not difficult to make sense of the law of this loop-soup that is further conditioned on these points $x$ and $y$ being on the boundary of the same loop-soup cluster. This is the event that there exists a bi-infinite chain of loops $(\gamma_{i(k)})_{k \in \Z}$ in the loop-soup ${\mathcal L}$ with $\gamma_{i ( k-1)} \cap \gamma_{i(k)} \not= \emptyset$ for all $k$ and such that  $d( \gamma_{i(k)}, x) \to 0$ and $d(\gamma_{i(k)}, y) \to 0$ as $k \to -\infty$ and $k \to + \infty$ respectively [the way to make sense of this conditioning by this event of zero probability is similar to the conditioning of Brownian motion started from $0$ to be strictly positive on the time-interval $(0,1]$, giving rise to variants of the three-dimensional Bessel processes].
Then:

\begin {theorem}[The special case of cluster boundary points]
\label {theoremloopsperco}
\label {thm1}
Let ${\mathcal L}$ denote a Brownian loop-soup conditioned on $x$ and $y$ not being visited by any loop (i.e., ${\mathcal L}$ is a standard loop-soup in $G \setminus \{ x,y\}$). The law of the following two collections of clusters are identical:
\begin {itemize}
 \item The clusters of
 ${\mathcal L}$ when this loop-soup is conditioned on $x$ and $y$ being on the boundary of the same cluster.
 \item The clusters of the union of ${\mathcal L}$ with one independent Brownian excursion from $x$ to $y$ (with its endpoints $x$ and $y$ removed).
 \end {itemize}
This identity also holds when one considers the occupation times on top of the clusters.
\end {theorem}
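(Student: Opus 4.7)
The plan is to prove Theorem~\ref{thm1} by degenerating the Dynkin-type conditioning on the GFF values at $x$ and $y$ into the boundary-of-cluster event. After subdividing an edge if necessary, I would split off two short length-$\eta$ spurs $e_x=[x,x_\eta]$ and $e_y=[y,y_\eta]$ in the cable graph, and condition the GFF on $\Gamma(x_\eta)=\alpha,\,\Gamma(y_\eta)=\beta$, with small $\alpha,\beta>0$. Since the restrictions of the GFF to $e_x,e_y$ are Brownian bridges pinned at $0$ at $x,y$, the standard Brownian-bridge/Bessel$(3)$ limit $\eta\to0$ with $\alpha/\sqrt{\eta},\beta/\sqrt{\eta}$ fixed converts the conditioning on $\Gamma$ being nonzero on $(x,x_\eta]\cup(y,y_\eta]$ into precisely the three-dimensional-Bessel-type conditioning that the paper identifies just before the statement as the right interpretation of ``$x,y$ lie on the boundary of a common cluster.''

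For the positive-probability conditioning $\Gamma(x_\eta)=\alpha,\,\Gamma(y_\eta)=\beta$, a two-point iteration of the Dynkin/Le~Jan isomorphism recalled in the introduction decomposes the loop-soup as an unconditioned loop-soup ${\mathcal L}_0$ in $G\setminus\{x_\eta,y_\eta\}$ together with two independent Poisson excursion processes ${\mathcal E}_x$ at $x_\eta$ (intensity $\alpha^2\,\nu_{x_\eta,x_\eta,G\setminus\{y_\eta\}}$) and ${\mathcal E}_y$ at $y_\eta$ (intensity $\beta^2\,\nu_{y_\eta,y_\eta,G\setminus\{x_\eta\}}$). I would further condition on $\{x_\eta\leftrightarrow y_\eta\}$ and then let $\alpha,\beta\to0$; by Lupu's formula the connection event has probability of order $\alpha\beta$, and the dominant contribution comes from configurations containing a \emph{single} ${\mathcal E}_x$-excursion and a \emph{single} ${\mathcal E}_y$-excursion that concatenate---possibly through some ${\mathcal L}_0$-loops---into one continuous path from $x_\eta$ to $y_\eta$; configurations with two or more excursions at either point carry extra factors of $\alpha^2$ or $\beta^2$ and drop out in the limit.

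The key identification is that the overlay of ${\mathcal L}_0$ with this surviving concatenated chain is distributionally equal, at the joint cluster-and-occupation-time level, to an unconditioned loop-soup in $G\setminus\{x_\eta,y_\eta\}$ together with one independent Brownian excursion from $x_\eta$ to $y_\eta$ (sampled from the probability measure associated with $\nu_{x_\eta,y_\eta,G\setminus\{x_\eta,y_\eta\}}$). This is the rewiring step: at each intersection of the chain with an ${\mathcal L}_0$-loop, cutting and re-gluing exchanges ``Poissonian chain of excursions and loops'' for ``single excursion plus free loop'' and preserves the joint cluster/occupation-time law, which is essentially the content of the loop-soup recombination identities of \cite{W1}. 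Matching the $O(\alpha\beta)$ asymptotics of Lupu's formula against the total mass of $\nu_{x_\eta,y_\eta,G\setminus\{x_\eta,y_\eta\}}$ pins down the normalization, and finally letting $\eta\to 0$ collapses the two spurs onto $\{x\}$ and $\{y\}$ and yields the statement of Theorem~\ref{thm1}. The occupation-time clause requires no extra work, since the entire argument is run at the occupation-time level.

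The main obstacle, as is typical for switching identities, is this rewiring identification itself: one must verify uniformly as $\alpha,\beta\to 0$ and then $\eta\to 0$ that ``a Poissonian chain of excursions and loops producing a single connection'' agrees in joint occupation-time/cluster distribution with ``one independent Brownian excursion on top of an unconditioned loop-soup.'' The combinatorial/measure-theoretic bookkeeping needed to rule out the competing multi-excursion chain structures and to match normalizations is the technical heart of the argument and is where the bulk of the work will lie.
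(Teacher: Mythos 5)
Your overall strategy (degenerate a Dynkin-type conditioning at two points and let the boundary values tend to zero) is in the right family, but two load-bearing steps are wrong, and they are wrong in a way that conceals exactly the content of the theorem. First, the two-point Dynkin decomposition you write down is incomplete: conditioning on $\Gamma(x_\eta)=\alpha$, $\Gamma(y_\eta)=\beta$ produces, besides the loop-soup in $G\setminus\{x_\eta,y_\eta\}$ and the two returning excursion processes, a third independent Poisson point process of excursions \emph{joining} $x_\eta$ and $y_\eta$ with intensity $\alpha\beta\,\nu_{x_\eta,y_\eta,G}$ (this is the cross term $(3)$ in Section \ref{SD}, coming from the $\Phi_1\Phi_2$ term in the expansion of $(\Gamma_0+\Phi_1+\Phi_2)^2$). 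Second, your asymptotic claim that multi-excursion chain configurations ``drop out'' is false, and so is the claim that the surviving configurations are single-excursion-at-each-end chains: the event that the connection occurs through a chain with \emph{no} direct $x$--$y$ excursion has probability of order $\alpha\beta$, the \emph{same} order as the event of having one direct excursion, and in the limit each contributes exactly one half of the connection probability. You can check this on the single edge $[0,1]$: a Brownian bridge from $a$ to $b$ avoids $0$ with probability $1-e^{-2ab}\sim 2ab$, while the direct-excursion Poisson variable is nonzero with probability $\sim ab$ only. Your heuristic ``each macroscopic endpoint excursion costs $\alpha^2$'' fails because the loop-soup clusters accumulate at $x$ and the returning excursions connect to them at every scale; summed over scales this yields an order-$\alpha\beta$ contribution, not $\alpha^2\beta^2$.

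Consequently the statement you actually need is that the law of the configuration conditioned on a chain connection with no direct excursion \emph{equals} the law of an unconditioned loop-soup plus one direct excursion, and this cannot be obtained by the cut-and-reglue rewiring you invoke. Any finite sequence of recombinations of loops and excursions at intersection points merely regroups the same path pieces, so it preserves the multiset of edge traversals and hence the parity of the number of crossings of any edge-cut separating $x$ from $y$; a chain built from closed excursions at $x$, closed excursions at $y$ and loops crosses such a cut an even number of times, whereas a single $x$-to-$y$ excursion crosses it an odd number of times. This is precisely the parity obstruction the paper insists on (``such a bijection \ldots cannot be obtained by finitely many simple rewirings/recombinations''), and the rewiring property of \cite{W1} does not bridge it --- it can only ever produce an even number of direct excursions. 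The paper therefore proves Theorem \ref{mainthm} first, via the parity lemma (unconditionally, the number of direct $x$--$y$ excursions is a Poisson variable conditioned to be \emph{even}) and a global measure comparison showing $P[E\cap\{x\leftrightarrow y\}]=P[O]$, and then deduces Theorem \ref{thm1} by letting $a,b\to 0$, under which the odd-conditioned Poisson number of $x$--$y$ excursions converges to exactly one and the endpoint excursion processes vanish. Your spur construction and the extra $\eta\to 0$ limit are harmless but unnecessary; the genuine gap is that you have no argument for the even-to-odd switching itself.
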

Note that in both descriptions, the collection of loops that are not part of the cluster $C$ that $x$ and $y$ are on the boundary of is clearly just a loop-soup in $G \setminus C$ (if one conditions on $C$), so that the core of the theorem is the fact that the law of this cluster $C$ is the same in both cases (as sketched in Figure \ref {fig}). Removing the endpoints of the excursion in the second description is just to ensure that in both descriptions the points $x$ and $y$ are on the boundary of a cluster but not in the cluster itself.

\medbreak

This theorem can be viewed as a limiting case of the following  general version of the switching property. This is now a description of the conditional law of $\Gamma^2$ (i.e., of the loop-soup occupation times) given that  $x \leftrightarrow y$, $\Lambda(x)= \Gamma^2 (x)=a^2>0$ and $\Lambda(y) = \Gamma^2 (y)=b^2>0$. It now involves a Poisson number of independent excursions joining $x$ to $y$  {\em conditioned  to be odd} (note that if it  is odd, then there is at least one excursion):
\begin {theorem}[The general switching property]
\label {mainthm}
\label {thm2}
The conditional distribution of the critical Brownian loop-soup occupation time $\Lambda$ (i.e., of $\Gamma^2$) conditionally on $x \leftrightarrow y$, $ \Lambda (x)=a^2 > 0$ and $\Lambda (y)= b^2 > 0$, is the law of the sum of the occupation times of the following {\bf independent} inputs:
\begin {itemize}
 \item A critical Brownian loop-soup in  $G \setminus \{x,y\}$.
 \item A Poisson point process of excursions away from $x$ in $G \setminus \{y\}$ with intensity $a^2 \nu_{x , x, G \setminus \{y\}}$.
 \item A Poisson point process of excursions away from $y$ in $G \setminus \{x\}$ with intensity $b^2 \nu_{y , y, G \setminus \{ x \}}$.
 \item A Poisson point process of excursions joining $x$ and $y$ in $G$ with intensity $ab \nu_{x , y, G}$
 {\bf where the number of these excursions is conditioned to be odd.}
\end {itemize}
\end {theorem}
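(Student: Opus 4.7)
The strategy is to establish a two-point analogue of Dynkin's isomorphism that describes the conditional law of $\Lambda$ given only the two occupation-time values, and then to identify conditioning on $\{x \leftrightarrow y\}$ as equivalent to imposing an odd-parity constraint on a Poisson number of cross excursions.

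Step 1 (Two-point isomorphism). Conditioning on the signed values $(\Gamma(x),\Gamma(y)) = (\alpha,\beta)$ and using the Markov property of the cable-graph GFF at $\{x,y\}$, one writes $\Gamma = \Gamma_0 + h$, with $\Gamma_0$ an independent GFF on $G\setminus\{x,y\}$ and $h$ the harmonic function in $G \setminus \{x,y\}$ with boundary values $\alpha$ and $\beta$ at $x$ and $y$. Expanding $\Gamma^2 = \Gamma_0^2 + 2\Gamma_0 h + h^2$ and computing the Laplace functional, one identifies the ``additional'' piece $2\Gamma_0 h + h^2$, in the spirit of the one-point isomorphism recalled in the introduction, as the occupation field of three independent PPPs of excursions: at $x$ of intensity $\alpha^2 \nu_{x,x,G\setminus\{y\}}$, at $y$ of intensity $\beta^2 \nu_{y,y,G\setminus\{x\}}$, and joining $x$ and $y$ of intensity $|\alpha\beta|\,\nu_{x,y,G}$, but with the total number $N$ of the last kind conditioned to be \emph{odd} when $\alpha\beta>0$ and \emph{even} when $\alpha\beta<0$. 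The origin of the parity constraint is the sign of the cross term $2\alpha\beta\,\phi_x\phi_y$ appearing in the expansion of $h^2$: at the level of generating functions this matches exactly the distinction between $\sinh$ (odd Poisson) and $\cosh$ (even Poisson).

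Step 2 (Effect of the connection event). In the Lupu coupling $\Lambda = \Gamma^2$, the signs of $\Gamma$ are assigned uniformly and independently on each sign-cluster. Hence, conditionally on $|\Gamma(x)| = a$ and $|\Gamma(y)| = b$, the event $\{x \leftrightarrow y\}$ coincides with the event that $\Gamma(x)$ and $\Gamma(y)$ end up with the same sign through sharing a sign-cluster, which on the Step~1 side is exactly the case $\alpha\beta > 0$, i.e.\ the odd-parity version of the decomposition. The two descriptions can be cross-checked via Lupu's formula, which gives $P[x\leftrightarrow y \mid \Gamma^2(x) = a^2, \Gamma^2(y) = b^2]$ in a form that, after a short computation, matches the probability that an independent Poisson variable of mean $ab\,G(x,y)$ is odd---consistent with identifying the cross-excursion intensity as $ab\,\nu_{x,y,G}$ (of total mass $ab\,G(x,y)$). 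Combining Steps~1 and~2 and integrating over the signs of $(\Gamma(x),\Gamma(y))$ yields the claimed description of the conditional law of $\Lambda$ given $\{x\leftrightarrow y,\,\Lambda(x) = a^2,\,\Lambda(y) = b^2\}$.

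The core technical difficulty is Step~1: showing that the loops of the soup through $\{x,y\}$, after conditioning on $(\Gamma(x),\Gamma(y))$, re-organise into exactly three independent PPPs with the stated intensities and a single global parity constraint on the cross-excursion count---rather than some more intricate distribution reflecting the internal sequence of $x$/$y$ visits of each visiting loop. The cleanest route I see is a two-stage use of the one-point Dynkin isomorphism: first apply it at $x$ (producing a loop-soup in $G\setminus\{x\}$ and a PPP of $x$-excursions), then apply it at $y$ inside that residual picture, tracking how an $x$-excursion that happens to hit $y$ combines, via the time-reversal symmetry of the Brownian loop measure, with a subsequent $y$-excursion into a single cross excursion. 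The residual parity constraint then emerges as the combinatorial trace of the loop-closure condition one had to dismantle during the two-stage unfolding.
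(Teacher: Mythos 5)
Your Step 1 is not correct, and the error is precisely at the point where the whole difficulty of the theorem lives. Conditioning on the \emph{signed} values $(\Gamma(x),\Gamma(y))=(\alpha,\beta)$ with $\alpha\beta>0$ and applying the Markov property plus Dynkin's isomorphism yields a Poisson point process of cross-excursions whose number $N$ is an \emph{unconditioned} Poisson variable of mean $m=|\alpha\beta|\,\|\nu_{x,y,G}\|$ --- no parity constraint at all. Indeed, the Laplace functional of the cross term is $\exp(\pm(m^{(k)}-m))$ depending on the sign of $\alpha\beta$; for $\alpha\beta>0$ this is the Laplace transform of an unconditioned PPP, and for $\alpha\beta<0$ it is the reciprocal of one and has no PPP interpretation. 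The $\sinh/\cosh$ (odd/even) structure you invoke only appears after one \emph{combines} the two sign cases, and that combination requires an extra probabilistic input (the independent resampling of signs on sign-clusters). A quick sanity check shows your Step 1 cannot hold: if $N$ were conditioned to be odd whenever $\Gamma(x)$ and $\Gamma(y)$ have the same sign, then $x\leftrightarrow y$ would hold almost surely on that event, whereas two far-apart points routinely carry the same sign while lying in different sign-clusters. Step 2 compounds the problem by identifying $\{x\leftrightarrow y\}$ with $\{\alpha\beta>0\}$; the former is a strict subset of the latter (signs can agree via the independent coin tosses on two distinct clusters). Quantitatively, the conditional connection probability given $\Lambda(x)=a^2,\Lambda(y)=b^2$ is $\tanh(m)$, not the odd-Poisson probability $\tfrac12(1-e^{-2m})$, so your proposed cross-check also fails.

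What is missing is the comparison that the paper's argument is built on: one needs \emph{two} descriptions of the law of $\Lambda$ --- (i) given the signed values, where $N$ is unconditioned Poisson (Dynkin), and (ii) given only the squares $\Lambda(x),\Lambda(y)$, where $N$ is conditioned to be \emph{even} (the parity lemma, coming from the fact that each Brownian loop crosses between $x$ and $y$ an even number of times). These two laws differ exactly by a factor $1/2$ on the disconnection event (because in (ii) one must still toss a coin for the relative sign of the two clusters), and playing this relation off against the Poisson identities $P[E]-P[O]=e^{-2m}$ yields $P[E\cap\{x\leftrightarrow y\}]=P[O]$ and the equality of the conditional laws of the field on these two events --- which is the theorem. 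Your closing ``two-stage Dynkin'' suggestion, if carried through, would produce the \emph{even}-parity constraint of (ii) (opposite to what your Step 1 asserts), which is a correct and useful intermediate step but still leaves you one comparison argument short of the switching identity.
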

Let us stress the two parts of the theorem written in bold, i.e., that these four inputs are {\em independent} and that the number of excursions joining $x$ to $y$ is {\em conditioned to be odd} (which implies that a connection from $x$ to $y$ indeed holds, regardless of the other three inputs). Recall that the conditional probability that $x \leftrightarrow y$ given $\Lambda (x)$ and $\Lambda (y)$ is known (and also expressed in terms of the total mass of $\nu_{x , y, G}$) so that this theorem yields also description of the conditional law of $\Lambda 1_{x \leftrightarrow y}$ given $\Lambda(x)$ and $\Lambda(y)$, and in turn (since the law of the Gaussian vector $(\Gamma(x), \Gamma(y))$ is of course also known) a description of the unconditional law of $\Lambda 1_{x \leftrightarrow y}$. In this integrated version, the description is still simple and useful, but the four items loose their full independence (as the last three ones will be related via the now random values of $\Lambda(x)$ and $\Lambda (y)$).

\medbreak
To our knowledge, this is essentially the only case (except for tree-like graphs) where a percolation measure conditioned on the existence of a connection has such a simple description.
It is a rather powerful and useful tool: It allows to revisit/simplify the proofs of some of the existing results on cable-graph GFFs and their geometry, and gives access to further new results. Some of its immediate consequences that are given in Section \ref {S2} deal with the case where one of the points is sent to infinity. Let us state just one of these in this introduction namely  the existence and a simple description of the incipient infinite cluster (IIC) for this percolation of loops model and leave further comments and results in Sections \ref {S2} and \ref {S6}.    We consider here $G$ to be $\Z^d$ for $d \ge 3$ but as we will discuss in Section \ref {SLiouville}, this result is in fact valid for a wide class of infinite transient graphs.   We will use here the probability measure on ``excursions from the origin to infinity'' (which can be obtained by taking an unconditioned Brownian motion starting from $x$, and keeping only its part after its last visit of the origin):

\begin {theorem}[The IIC measure for  $\Z^d$ for all $d \ge 3$]
\label {IIC}
\label {thmIIC}
 The limit when $x \to \infty$ of the law of the square of the cable-graph GFF conditioned on $0 \leftrightarrow x$ exists.
 The law of this incipient infinite cluster measure, is that of the occupation time of the overlay of a Brownian loop-soup in $\Z^d$ with distribution reweighed by the square root of its total local time at the origin, with one independent Brownian excursion from $0$ to infinity.
\end {theorem}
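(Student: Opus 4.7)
The plan is to derive Theorem \ref{IIC} directly from the general switching identity in Theorem \ref{mainthm}, applied to the two points $0$ and $x$ in $G = \Z^d$, and then to take $x \to \infty$. I fix a bounded window $R \subset \Z^d$ containing $0$ and describe the restriction to $R$ of the law of $\Gamma^2$ conditionally on $0 \leftrightarrow x$, $\Lambda(0) = a^2$, and $\Lambda(x) = b^2$. By Theorem \ref{mainthm}, this conditional law is the sum of the occupation times of four independent contributions; the plan is to take the limit of each of them as $x \to \infty$ and then to integrate out the conditioning on $(\Lambda(0), \Lambda(x))$.

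As $x \to \infty$: (i) the background loop-soup in $\Z^d \setminus \{0, x\}$ converges on $R$ to a loop-soup in $\Z^d \setminus \{0\}$, since loops that remain in $R$ do not see the far-away point $x$; (ii) the Poisson excursions from $0$ with intensity $a^2 \nu_{0, 0, \Z^d \setminus \{x\}}$ converge to those with intensity $a^2 \nu_{0, 0, \Z^d}$; (iii) the Poisson excursions from $x$ with intensity $b^2 \nu_{x, x, \Z^d \setminus \{0\}}$ deposit a vanishing amount of mass in $R$, because the $\nu_{x, x, \Z^d}$-mass of excursions that reach $R$ decays to zero by transience of Brownian motion in $d \geq 3$; and (iv) the odd-conditioned Poisson process of excursions from $0$ to $x$ with intensity $ab \nu_{0, x, \Z^d}$ concentrates on the event \emph{exactly one excursion}, because $\|\nu_{0, x, \Z^d}\| \asymp G(0, x) \to 0$, so that the odd-conditioned probability of three or more excursions is $O(\|\nu_{0, x, \Z^d}\|^{2})$. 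The normalized law of a single excursion from $0$ to $x$ converges to that of a Brownian excursion from $0$ to infinity, e.g.\ via its Doob $h$-transform description with $h(\cdot) = G(\cdot, x) / G(0, x)$ converging to the constant function $1$ uniformly on compacts of $\Z^d \setminus \{0\}$.

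To remove the conditioning on $(\Lambda(0), \Lambda(x))$, I integrate against the joint law of $(\Gamma(0), \Gamma(x))$. Asymptotic independence of this Gaussian pair (since $G(0, x) \to 0$), together with the asymptotic $P[0 \leftrightarrow x \mid \Gamma(0) = a, \Gamma(x) = b] \sim |ab|\,\|\nu_{0, x, \Z^d}\|\,\mathbf{1}_{\mathrm{sign}(a) = \mathrm{sign}(b)}$ (which is just the probability that a Poisson variable with small parameter $|ab|\,\|\nu_{0, x, \Z^d}\|$ is odd), implies that after dividing by $P[0 \leftrightarrow x]$, the limiting marginal density of $\Lambda(0)$ is proportional to $\sqrt{\Lambda(0)}\, f_{\Lambda(0)}(\Lambda(0))$, i.e.\ the unconditional density of $\Lambda(0)$ tilted by $\sqrt{\Lambda(0)}$; the $b$-marginal integrates out without leaving any trace on $R$. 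Combining everything, in the limit, conditional on $\Lambda(0) = a^2$ (with $a^2$ drawn from the $\sqrt{\Lambda(0)}$-tilted law), the occupation time on $R$ is the independent sum of a loop-soup in $\Z^d \setminus \{0\}$, a Poisson excursion process from $0$ with intensity $a^2 \nu_{0, 0, \Z^d}$, and one Brownian excursion from $0$ to infinity. But the first two summands, averaged over $a^2$ against the tilted law, are exactly the occupation time of an unconditional Brownian loop-soup in $\Z^d$ reweighted by $\sqrt{\Lambda(0)}$ --- this is precisely Dynkin's decomposition recalled in the introduction, read in reverse. Together with the added independent $0$-to-$\infty$ excursion, this gives the IIC description claimed. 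The main technical obstacle lies in step (iv): establishing simultaneously that the odd-conditioned Poisson law concentrates on $\{1\}$ and that the normalized single excursion from $0$ to $x$ converges to the $0$-to-$\infty$ excursion in a topology fine enough to give convergence of its occupation time on $R$; the rest of the argument reduces to standard Green's function asymptotics in $\Z^d$, $d \geq 3$, and algebraic manipulations of the switching identity.
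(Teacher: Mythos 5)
Your proposal is correct and follows essentially the same route as the paper: apply Theorem \ref{mainthm} to the pair $(0,x)$, let $x\to\infty$ so that the excursions away from $x$ vanish from any fixed window, the odd-conditioned Poisson number of $0$-to-$x$ excursions concentrates on one, the normalized excursion law converges to the excursion from $0$ to infinity, and the integration over $(\Lambda(0),\Lambda(x))$ with $P[0\leftrightarrow x\mid \Lambda(0),\Lambda(x)]\propto\sqrt{\Lambda(0)\Lambda(x)}\,\|\nu_{0,x}\|$ produces the $\sqrt{\Lambda(0)}$-reweighting, with the first two items recombining into a conditioned loop-soup in $\Z^d$. The paper states this in two sentences in Section \ref{S2.1}; your write-up simply supplies the details it leaves implicit.
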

Here, the convergence means that for any $K$, the law of the occupation times viewed as random continuous functions defined on the intersection of the cable-graph with the metric ball of radius $K$ centered at the origin do converge weakly. The reweighing here means that one has a Radon-Nikodym derivative which is equal to a constant (chosen so that one indeed gets a probability measure) times the square root of the field at the origin.  Note that the existence part of this theorem (i.e., without the very simple description of the limit in terms of the overlay of a loop-soup with the excursion) in the cases $\Z^d$ for $d \ge 3$ (except for $d=6$) was one recent main outcome of \cite {CD3} (as the title of that paper indicates!) which came on the shoulder of substantial works on one-arm estimates and used delicate and highly non-trivial exploration/renormalization arguments (see more about this and related questions in Section \ref {S2}).

\medbreak
{
We will also discuss switching results along closed loops, that are more reminiscent of the switching identities for the random current representation of the Ising model as initiated by Aizenman \cite {Ai}. Here we come back to the definition of the cable-graph based on a discrete graph (with sites $S$ and edges $E$) and view $G$ as the union of the segments corresponding to the edges of $E$.
We can note that each edge/segment will be crossed only finitely many times by loops in the Brownian loop-soup ${\mathcal L}$. The loops are not oriented, but we can make sense of the parity $P(e) \in \Z / (2 \Z)$ of the total number of crossings of an edge by ${\mathcal L}$ (i.e., the sum over all loops of ${\mathcal L}$ of their number of crossings of the edge); as opposed to the actual number of crossings of the edge, this parity remains unchanged if one glues two Brownian loops into one, or splits one Brownian loop into two loops. Let $E_\Lambda$ denote the set of edges along which $\Lambda$ remains strictly positive, and let $E(P)$ denote the set of edges for which $P$ is equal to $1$ (so this means that the number of crossings is odd). Clearly, $E(P) \subset E_\Lambda$ (because if $\Lambda$ hits $0$ on the edge, there can not be any crossing), and $(S,E(P))$ is an even subgraph of $(S,E_\Lambda)$ (i.e., for each site, the number of edges of $E(P)$ adjacent is even -- this is just because the crossings are made by loops).

\begin {proposition}
\label {Pnew}
Consider a Brownian loop-soup on the cable graph $G$ and let $\Lambda$ be its occupation time field. Conditionally on $\Lambda$, the graph $(S,E(P))$ is a uniform random even subgraph of $(S,E_\Lambda)$.
\end {proposition}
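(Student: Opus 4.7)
My plan would be to first dispense with the two structural constraints on $E(P)$ noted in the paper and then to establish uniformity via a local symmetry of the loop-soup conditional law. The inclusion $E(P)\subseteq E_{\Lambda}$ is automatic: any point $z$ on an edge $e$ at which $\Lambda(z)=0$ is visited by no loop, so every loop-strand that enters $e$ from one endpoint must exit through the same endpoint and contributes zero to the crossing count of $e$. The evenness of $(S,E(P))$ at each site $v$ is immediate because every individual Brownian loop has even degree at $v$ --- each visit to $v$ enters along one edge and leaves along one edge.

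The core of the uniformity argument, in my approach, would proceed by enriching the conditioning: for each site $v$, let $\mathcal{E}_{v}$ denote the unordered multiset of excursions of the loop soup away from $v$ (i.e.\ cable-graph paths from $v$ to $v$ obtained by cutting the loops at each visit to $v$). The collection $(\mathcal{E}_{v})_{v\in S}$ determines $\Lambda$ and the parities $P(e)$, but \emph{not} the loop soup itself: different pairings at each $v$ between outgoing and incoming sub-excursions concatenate the same $\mathcal{E}_{v}$'s into different loop configurations with identical $\Lambda$ and identical $P$. Working at this level, I would show that for every simple cycle $C=e_{1}\cdots e_{k}$ in $E_{\Lambda}$ there is a measure-preserving involution on the space of excursion-multiset configurations with fixed $\Lambda$ that flips exactly $P(e_{1}),\ldots,P(e_{k})$: on each edge $e_{i}$, one performs a local surgery interchanging one crossing excursion of $e_{i}$ with a pair consisting of one excursion from each endpoint of $e_{i}$, with the surgeries combined along $C$ so that the composition is an involution. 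Since any even subgraph of $E_{\Lambda}$ is a symmetric difference of simple cycles, invariance under all such involutions forces the conditional law of $P$ given $\Lambda$ to be uniform on even subgraphs of $E_{\Lambda}$.

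The principal obstacle lies in implementing the cycle-flip rigorously in the continuum: each $\mathcal{E}_{v}$ is a countably infinite multiset with an infinite accumulation of arbitrarily short excursions, so the surgeries must be defined measurably on an infinite configuration space. The cleanest route I see is a truncation argument: restrict attention to excursions of diameter at least $\varepsilon$ (finitely many on any macroscopic portion of the graph), perform the cycle flip there as a concrete combinatorial swap, and pass to the limit $\varepsilon\downarrow 0$ using the finiteness of the truncated excursion measures together with the fact that excursions of diameter below $\varepsilon$ cannot cross any edge of length greater than $\varepsilon$ and therefore contribute $0$ to $P$. A possible alternative, which may be simpler, is to prove the statement first for the discrete random-walk loop soup on the vertex set --- where the parity of edge crossings given the vertex local times is a classical random-current type fact --- and then transport it to the cable graph by noting that edge-crossings of the cable loop soup coincide with those of its trace on $S$.
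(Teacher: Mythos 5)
Your overall architecture---condition on the edge-local excursion data given the site local times, flip parities along simple cycles contained in $E_\Lambda$, and conclude uniformity because simple cycles generate the group of even subgraphs under symmetric difference---is essentially the paper's, and the reduction of uniformity to invariance under cycle flips is sound. The gap is in the one step that carries all the weight: the claimed measure-preserving involution on a single edge that ``interchanges one crossing excursion of $e_i$ with a pair consisting of one excursion from each endpoint'' while fixing $\Lambda$. As a pathwise surgery this cannot work: a crossing excursion of $e_i$ has strictly positive occupation density along all of $e_i$, whereas each one-sided excursion from an endpoint dies strictly inside the edge, so exchanging one for the other changes the occupation-time profile on $e_i$ and hence changes $\Lambda$. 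What is actually true---that conditionally on the endpoint local times and on $\Lambda$ staying positive throughout the edge, the number of crossings is even or odd with probability $1/2$ each \emph{and} the conditional law of the field on the edge is the same in the two cases---is an identity in distribution, not a combinatorial swap of individual excursions; it is precisely the single-edge switching identity (Corollary \ref{Cpy}, equivalently the Pitman--Yor decomposition of Bessel bridges), which is the nontrivial input the paper isolates and proves by other means. The paper stresses (introduction, item (5), and Section \ref{Speeling}) that no bijection realized by finitely many reconnections of excursions can achieve this, for parity reasons; a genuine pathwise coupling requires the reverse-VRJP machinery deferred to \cite{LuW}. So your argument asserts, in the guise of an ``obvious local surgery,'' exactly the thing that needs to be proved.

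Your fallback---prove the statement for the discrete random-walk loop soup, where edge-crossing parity given vertex local times is ``a classical random-current fact,'' and transport it to the cable graph---does not repair this. For the discrete loop soup the conditional distribution of the number of crossings of an edge given the local times at its endpoints is explicit (formula (\ref{edgeprobab}) in the paper) and is \emph{not} symmetric between even and odd; the $1/2$--$1/2$ symmetry appears only after the further conditioning on $e\in E_\Lambda$, i.e.\ on the occupation field remaining positive inside the edge, which has no discrete counterpart. This is exactly the sense in which the proposition is intrinsically a cable-graph statement. If you instead take the single-edge switching identity as an input, the rest of your argument---Proposition 7 of \cite{W1} for the conditional independence of the edge pieces given the site local times (modulo the parity constraint at each site), then the $1/2$--$1/2$ parity symmetry per edge and the equality of the two conditional laws of $\Lambda_e$---closes the proof and coincides with the paper's.
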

[When a graph is infinite but has only finite connected components, we say that a uniform even random subgraph is obtained by choosing independently a uniform random even subgraph for each component.]
This proposition is reminiscent of ideas from \cite {LW1}, but one important feature to stress here is that this result is valid conditionally on all of  $\Lambda$ (not just on $E_\Lambda$). Since the conditional law depends on $E_\Lambda$ only, this means that conditionally on $E_{\Lambda}$, the field $\Lambda$ and $P$ are independent. So, one can first observe the random set $E_\Lambda$, then choose independently the random even subgraph, and independently of the random even subgraph, the values of $\Lambda$ at the sites and then all the conditionally independent values of $\Lambda$ on each edge.
As we shall see, this corresponds to the fact that if one conditions on $\Lambda$, there is a  ``measure-preserving switch'' of the parity of all the edges along any given cycle of edges contained in $E_\Lambda$ (and this switch does not change $\Lambda$).

Let us describe one interesting concrete consequence of this proposition here: Consider the case of a cable graph that is contained in $\Z^d$ (and that we view as embedded in $\R^d$), and consider the $(d-2)$-dimensional space $\Delta := \{ 1/2\} \times \{ 1/2 \} \times \R^{\{ d-2 \}}$ (so when $d=2$, this is the point $(1/2, 1/2)$, when $d=3$, it is a line and so on). Then, each oriented loop in the cable graph has an index around $\Delta$ (which is the index around $(1/2, 1/2, 0,\ldots, 0)$ of its projection on the plane $\Z^2 \times \{0\}^{d-2}$). Whether this index is even or odd does not change when one changes the orientation of the loop. So, one can define unambiguously the total parity index $P_\Delta ( C, {\mathcal L})$ of the collection of loops in ${\mathcal L}$  that are part of a cluster $C$ of loops as the sum (in $\Z / 2\Z$) of the parity of the indexes of each of the Brownian loops in the cluster (which is finite as there are only finitely many loops that visit more than one site in $C$). Note that this quantity is again invariant under rewiring of the unoriented Brownian loops (i.e., it does not change when one concatenates or splits finitely many Brownian loops). Note finally that if $C$ does not contain any self-avoiding cycle of odd index around $\Delta$, then $P_\Delta (C, {\mathcal L})$ is necessarily equal to $0$.

\begin {corollary}
\label {Cnew}
Conditionally on $\Lambda$ (which contains the information defining all the loop-clusters), the collection $(P_\Delta (C, {\mathcal L}))$ for the loop soup clusters that do contain cycles with an odd index around $\Delta$ form an i.i.d. family of Bernoulli-(1/2) random variables (i.e., one tosses one fair coin for each of these clusters to choose its loop-soup index parity).
\end {corollary}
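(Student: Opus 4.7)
The plan is to deduce Corollary \ref{Cnew} from Proposition \ref{Pnew} by rewriting $P_\Delta(C,\mathcal{L})$ as a $\Z/2\Z$-linear functional of the edge-parity configuration $P$ restricted to the cluster $C$, and then exploiting the uniformity of $P$ on the cycle space of $(S,E_\Lambda)$.

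First I would identify index-parities with edge-sums against a fixed ``dual ray''. Pick a ray $R \subset \R^2$ starting at $(1/2,1/2)$ that avoids the lattice points (for instance $R = \{(t,1/2) : t \le 1/2\}$), and let $F \subset E$ be the set of edges whose projection onto the first two coordinates is a unit segment crossing $R$ at an interior point; for the ray above, $F$ consists of the edges joining $(k,0,x_3,\dots,x_d)$ to $(k,1,x_3,\dots,x_d)$ with $k \le 0$ and $x_3,\dots,x_d \in \Z$. For any closed curve $\gamma$ in $G$ avoiding $\Delta$, its winding number around $\Delta$ equals the signed crossing number of its $\R^2$-projection with $R$, so modulo $2$ this parity equals the total number of midpoint-crossings of edges of $F$ by $\gamma$. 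A one-edge case analysis — each bounce of $\gamma$ inside an edge $e \in F$ contributes $0$ or $2$ midpoint crossings depending on whether it reaches past the midpoint, and each full traversal contributes exactly one — shows that the midpoint-crossing parity of $e$ coincides with $P(e)$. Summing over the loops of $\mathcal{L}$ belonging to a cluster $C$ then yields
\[
P_\Delta(C,\mathcal{L}) \;=\; \sum_{e \in F,\, e \subset C} P(e) \pmod 2.
\]

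Next, by Proposition \ref{Pnew} the conditional distribution of $P$ given $\Lambda$ is the uniform law on the cycle space $\mathcal{Z}$ of $(S,E_\Lambda)$. This space splits as the $\Z/2\Z$-direct sum of the cycle spaces $\mathcal{Z}(C)$ over the connected components of $(S,E_\Lambda)$ (which are in bijection with the loop-clusters $C$ of $\mathcal{L}$ that contain at least one vertex; clusters without vertices have trivial cycle space and vanishing $P_\Delta$, and are automatically excluded from the statement). Hence, conditionally on $\Lambda$, the restrictions $(P|_C)_C$ are mutually independent, each uniform on $\mathcal{Z}(C)$. For each cluster $C$, the map $\pi_C : \eta \mapsto \sum_{e \in F,\, e \subset C} \eta(e) \pmod 2$ is a $\Z/2\Z$-linear form on $\mathcal{Z}(C)$. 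Applying the displayed identity to the self-avoiding cycles of $C$ (each of which lies in $\mathcal{Z}(C)$), we see that $\pi_C$ vanishes identically precisely when every self-avoiding cycle of $C$ has even index around $\Delta$; otherwise $\pi_C$ is surjective onto $\Z/2\Z$, and the pushforward of the uniform measure on $\mathcal{Z}(C)$ is uniform on $\Z/2\Z$, i.e.\ Bernoulli-$(1/2)$. Combined with the conditional independence of $(P|_C)_C$, this yields the claimed i.i.d.\ family for the clusters containing an odd-index self-avoiding cycle.

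The main obstacle I anticipate is the cable-graph subtlety in the first step: Brownian loops bounce infinitely often near each vertex they visit, and they may also stray into edges that are only partially contained in $C$, so one must justify that only the global midpoint-crossing parity of each edge of $F\cap C$ enters the winding-parity computation. This reduces to the homotopy invariance of the winding number in $\R^2 \setminus \{(1/2,1/2)\}$: any bounce-excursion inside a single edge (including edges not contained in $C$) is null-homotopic in the punctured plane and thus contributes evenly to the signed crossings of $R$, while full traversals of edges of $F$ — which can only happen for edges of $F \cap E_\Lambda \cap C$ — contribute exactly one crossing each. This step is elementary but worth spelling out with care, especially to treat the bounces that occur in edges $e \in F$ that are not fully contained in $C$.
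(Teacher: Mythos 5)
Your proof is correct and follows essentially the same route as the paper: both rest on Proposition \ref{Pnew} together with the observation that adding an odd-index cycle of $E_\Lambda$ to the even subgraph flips $P_\Delta(C,{\mathcal L})$, applied independently on each component of $(S,E_\Lambda)$. Your formulation via the dual ray and the $\Z/2\Z$-linear functional $\pi_C$ simply makes explicit the ``elementary considerations'' that the paper invokes when asserting that switching along such a cycle changes the parity, and your homotopy argument reducing Brownian windings to edge-crossing parities is the right way to fill in that step.
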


This result will work also if one replaces $\Delta$ by a simple compact $(d-2)$-dimensional manifold that does intersect the cable-graph.
One somewhat surprising consequence is that regardless of its shape, a loop-soup cluster that wraps around $\Delta$ (i.e., contains a cycle with non-zero index around $\Delta$) will have a conditional probability at least $1/2$ to actually contain a Brownian loop that wraps around $\Delta$ (indeed, if $P_\Delta ( C, {\mathcal L})=1$, then at least one of the Brownian loops in $C$ must has a non-zero index).
This will allow to prove Lupu's ``intensity doubling conjecture'' formulated in \cite {L3} (i.e., to prove that it holds) about loop-soups in high dimensions.
See Section \ref {Snew} (and \cite {LuW}) for more on all these aspects.}

\medbreak
Let us make the following first comments related to these statements:
\begin {enumerate}
\item
Theorem \ref {mainthm} easily implies Theorem \ref {thm1}: If one considers the former in the limit when $a\to 0$ and $b \to 0$, ones ends up conditioning $x$  and $y$ to be  boundary points of the same sign-cluster of $\Gamma$. In this case, the Poisson point process of excursions with intensity $a^2$ away from $x$ and the Poisson point process of excursions with intensity $b^2$ away from $y$ disappear in the limit, and the Poisson point process of excursions joining $x$ and $y$ with intensity $ab \nu_{x ,y, G}$ conditioned to be odd will in  the limit consist of one single excursion joining $x$ and $y$. We therefore readily obtain Theorem \ref {thm1} as a consequence of Theorem \ref {thm2}. One could also (we will do something like this in the context of interlacements in Section \ref {S2}) let only $a$ tend to $0$ while letting $b$ fixed, and one then also has only one excursion joining $x$ and $y$ in the limit (and no excursion away from $x$ -- one ends up with a description of the conditional law given that $x$ is on the boundary of the cluster containing $y$).
\item
Our statements can be easily shown to hold for any Brownian motion with killing rate instead of ordinary Brownian motion (the killing rate $k(x)$  can be dependent on the points $x$ on the cable-graph -- one can even pass to the limit where the killing rate becomes infinite at some points, which corresponds to transform those points into absorbing boundary points, i.e., introducing Dirichlet boundary conditions at this point) -- meaning that one modifies the measures on loops and the excursions accordingly. To see this, one option is to include this in the setup as in \cite {LJ2,LJ3} and see that it all goes through. Alternatively, one can deduce it from the case without killing (sometimes referred to as massless) and to use the observation that (for finite graphs and finite killing rates) all our switching identities are identities in distribution for the occupation time field. Passing from the massless to the massive case (i.e., when $k$ is not identically $0$) amounts just to a reweighing of the probability measure by a factor proportional to $U(\Lambda):= \exp (- \int \Lambda(x) k(x) dx)$ that depends on $\Lambda$ only. Hence, the switching identity is preserved under such reweighing. It also shows that it is not necessary for $k$ to be non-negative, as one just needs $U(\Lambda)$ to be integrable. We will comment on what the switching property says for long connections in the massive case in Section \ref {Smassive}.
\item
Introducing random walk representation of fields, lattice models and exploiting some of their combinatorial features is an idea that can be traced back (at least) to the seminal works of Symanzik or  Brydges, Fr\"ohlich and Spencer \cite {Sy,BFS}, see \cite {LJ3} for a recent relevant account. The present switching property can be reminiscent of the switching property for the random current representation of the Ising model from Griffith, Hurst and Sherman \cite {GHS} later developed and used very fruitfully by Aizenman and others, starting with \cite {Ai} (see the  ``the random current revolution'' section in the review \cite {DC}) where it is explained how  Ising correlations (and more) could therefore be expressed in terms of probabilities
involving multiple independent currents. We stress however that -- as is apparent from our GFF cable-graph statements here, where the events that one conditions on are {\em connection events on the cable graph} --  our results are very much ``cable-graph ones'' and do not seem to have an as simple counterpart for discrete loop-soups. On the other hand (see Section \ref {Snew}), it will be possible to relate aspects of our switching identities to those appearing in the random current representations of the Ising model (as the results of \cite {LW1} also suggest).
\item
We will provide several proofs. Let us say a few words about one rather ``self-contained'' proof to explain where the switching is coming from:
One first main step  will be to see that when conditioning on $\Lambda (x)$ and $\Lambda (y)$ only (and not on the fact that $x \leftrightarrow y$), then the decomposition of the Brownian loops that hit both $x$ and $y$ in the loop-soup into excursions away from $\{x,y\}$ combined with the rewiring property will yield a similar  decomposition as in Theorem \ref {thm2}, but where the final item is replaced by a Poisson point process of excursions with intensity $ab \nu_{x , y, G}$ that is {\em conditioned to be even} (instead of odd). Note that with this description, if one then additionally conditions on $x \leftrightarrow y$, the four inputs become very much correlated in the case where there is no excursion joining $x$ and $y$ (and when $x$ and $y$ are far away from each other, this will be the dominating event) since the event $x \leftrightarrow y$ will typically be created by an excursion away from $x$ that touches either an excursion away from $y$ or a chain of loops that then touches an excursion away from $y$. This description with even number of excursions does therefore not provide that much insight about connectivity events.

However, one can also look at the same conditional law for $\Lambda$ when one adds an extra ghost edge between $x$ and $y$ along which the GFF does not hit $0$. The Markov property of the GFF shows that one obtains the same conditional distribution for $\Lambda$ but with an extra weighting of $2$ for the configurations for which $x$ is not connected to $y$ in $G$ (as one does not need to toss a coin so that they have the same sign). So, this reweighted law can then be
described by the same four independent items but with an {\em unconditioned} (i.e., it can be even or odd) Poisson number of excursions joining $x$ and $y$ in $G$. Comparing this with the previous description then allows to conclude that
the probability of having an odd number of excursions is in fact be identical to the probability of having an even number of excursions and a connection between $x$ and $y$ (if one adds the first three items), and lead to the theorem.

\item
The theorem shows the existence of
a (measure-preserving) bijection that preserves the occupation times between the configurations with an odd number of excursions and the configurations with an even number of excursions that do create a connection between $x$ and $y$. This bijection is the  ``switching'' in the name that we gave to this property.
However, neither the statement nor the idea of proof that we just gave do provide an insight into what such a bijection might look like. Some thought does actually lead to the idea that such an explicit bijection cannot be obtained by finitely many reconnections/rewiring of loops/excursions,  due to  parity issues.  The arguments and results of \cite {QW,LQW} also provide food for thought in this direction. They indeed suggest that if the switching corresponds to some sort of rewiring on the loops and excursions, then some exceptional points that were not part of a chain of loops in the first instance will become part of an excursion (note that if these points are sufficiently sparse, then this won't affect the total occupation time).

In Section \ref {Snew}, we will discuss two aspects of these bijections: One at the level of edges (leaving a black box about what happens within each edge) that is closely related to the usual switching for random currents, and one at the level of the individual Brownian loops that hopefully enlightens what is going on here (even if the coupling is by no means trivial). This will build on the limit of reverse-vertex reinforced jump process that has been studied by Lupu, Tarr\`es and Sabot \cite {LST2}. This approach and its consequences will be detailed in subsequent work \cite {W4,LuW}.

\item
Both the GFF and its square (via the loop-soup representation) do satisfy a spatial Markov property. The switching property in some sense allows to reconcile the apparent parity-type contradiction that the combination of these two Markov properties do seem to create.  Of course, one can also view it the other way around (and this is basically the way the proof outlined above goes), namely that the switching property is a consequence of the combination of these two Markov properties. In some sense, all three proofs can be viewed as variations around this theme.

\end {enumerate}

  The paper is structured as follows: In Section \ref {S3}, we will describe a proof Theorem \ref {mainthm} that is fairly direct and self-contained, and builds mostly on loop-soup considerations and the rewiring property in particular. In Section \ref {SD}, we describe another approach/interpretation to this proof in the spirit of Dynkin's isomorphism theorem, so this involves more some Gaussian random variables. Section~\ref {Snew} provides yet another approach that highlights the link with the random current representation of the Ising model (and leads to Proposition \ref {Pnew} and to the proof of Lupu's intensity doubling conjecture). We also informally discuss in that section how to get an explicit bijections at the level of the Brownian loops.
   In Section \ref {S2}, we then describe the aforementioned direct consequences of the switching property (the constructions and descriptions of various versions of the infinite incipient clusters and statements about the combination of loop-soup with interlacements). In Section \ref {Se6}, we briefly outline some forthcoming work and results (respectively on the explicit bijection at the level of Brownian loops,  and on some further consequences about the asymptotic behaviour of large clusters, for instance on how to extract information about multiple-point functions).

\medbreak

Let us conclude this introduction with the following important remark: The proof (or the proofs, since we will provide several ones) of these switching identities are not technically difficult, and the underlying ideas bear many similarities with many of the papers in the area. In fact, once one realizes that the statements do hold, it is actually easy to work out proofs, building on various items in the existing literature -- and each of those acquainted with them will surely quickly be able to provide their own prefered approach (this includes Yves Le Jan or Lorca Heeney who independently sent me some compact versions after the posting of the first version of this paper, based on the results of \cite {LW1,LJ3}).
One can in fact detect aspects of this general switching property in a number of recent works on the GFF, especially those with the loup-soup perspective. This includes in the continuous and conformally invariant two-dimensional setting the papers \cite {QW,LQW} that we will further comment on, as well as \cite {JLQ} [Remark 1.12 in that paper can be viewed as the scaling limit i.e., as the continuum version of Theorem \ref {thm1} in this two-dimensional setting] or Lupu's paper on the twisted GFF \cite {L2}. In the cable-graph setting, as we shall discuss in Section \ref {S63}, an identity in law about bridges of three-dimensional Bessel processes by Pitman and Yor from the 1982 paper \cite {PY} can actually be interpreted as the switching property for the cable graph consisting of just one edge. Another relevant example is the paper \cite {Aid1} (see in particular Theorem 1.4 there that one could then use by adding an artificial additional edge bewtween two points to a cable graph) that also builds on the considerations \cite {W1,LW1} (as part of the present paper does) that describes the number of excursions along one edge in the cable graph, where the conditioned Poisson random variables appear. So, one possible short proof is to build on \cite {W1,LW1} to quickly deduce the switching for one edge, and then the arguments in Section \ref {Snew} that are more ``discrete switching'' arguments in the spirit of \cite {Ai2} and random current representations, to deduce the result in general cable graphs.

One can therefore wonder why such a striking general result that has many nice consequences has not been uncovered before. The main reason is arguably simply that the explicit bijection between configurations with even and with odd number of excursions at the level of the Brownian paths is not trivial (which provides an excuse to the community -- or at least to me -- to why it was  not so easy to guess the result), especially when one looks at it from the lens of Dynkin's isomorphism theorem without its loop-soup interpretation. Also, as we have already mentioned, it is not an observation that comes from discrete models -- ideas from the continuum (at least on an interval) are needed to get the ball rolling.
We note that it is not the first time that work on the (conformally invariant) two-dimensional continuous settings did lead mathematicians on the trail of  general basic results about the GFF and loop-soups on cable graphs: The loop-soup itself was first introduced in the continuum setting in \cite {LW} in relation to the scaling limit of loop-erased random walk and the conformal restriction properties developed in \cite {LSW}, and its relation to the continuum two-dimensional GFF (via the combination of \cite {SchSh,MS} and \cite {SW}) was derived before Le~Jan \cite {LJ1,LJ2} pointed out the direct relation between the square of the discrete GFF and the discrete loop-soups in general transient graphs or the direct basic relation with the loops erased in Wilson's algorithm was worked out (see e.g. \cite {L,WP}). This also holds in the present case: In \cite {LQW}, we were zooming in on some decompositions of two-dimensional loop-soup clusters  given part of their boundaries (following earlier considerations in \cite {QW,Q,QW2}), where some quizzing parity issues naturally popped up. A similar-looking switching identity appeared in \cite {LQW} [in the latter section of that paper, we derive an identity in the (more involved) continuous two-dimensional case and  for rectangular domains with some very special boundary conditions, so the points $x$ and $y$ in some sense correspond to two opposite vertical sides of the rectangle and the excursions from $x$ to $y$ do correspond to horizontal crossings]. This in turn led to the trail of the type of bijection that we will describe in Section \ref {Speeling} and also to the realisation that this type of  switching result was actually working already at the simpler level of cable-graph GFFs and not only for some specific values of the values $a$ and $b$. So, again a somewhat convoluted route -- with a special role played by the considerations, arguments and questions raised in \cite {QW,LQW} coming from the continuum two-dimensional world.

\section {A first proof of the switching property}
\label {S3}

We now turn to a first proof of Theorem \ref {mainthm}, which is the one that we outlined in the introduction.
Let us first write some words about the way in which we normalize the various objects involved: The GFF is defined via the Green's function of the cable-graph Brownian motion, which is the expected local time at $y$ cumulated by a Brownian motion started at $x$ until the possibly infinite time at which it exits the graph. The normalization of the Brownian loop measure is then the one such that the occupation time of a Poisson point process with that intensity is exactly distributed as the square of the GFF (this corresponds to $c=1$ in the normalization/notation of \cite {LW} inspired by the notion of central charge for two dimensional models, or $\alpha = 1/2$ in the papers by Le Jan and Lupu).
The excursion measure $\nu_{x,x}= \nu_{x,x, G}$ away from a point $x$ in $G$ is chosen in such a way that $ \nu_x ( \ell_e(z) ) = \phi_x (z)^2$ where $\ell_e$ is the local time of the excursion $e$ at $z$ and $\phi_x(z)$ is the harmonic function in $G \setminus \{ x\}$ with boundary values $1$ at $x$ and $0$ at all other boundary points (and at infinity). The excursion measure $\nu_{x , y}$ between two points can then be constructed as the measure on paths joining $x$ to $y$, obtained by first restricting the measure $\nu_{x,x}$ to those paths that hit $y$, and then keeping only the part of this path up until the first time it hits $y$.

It is also worth recalling that (when viewed in relation to the GFF and to the rewiring ideas of \cite {W1} that will play an important role here) the measures on loops and on excursions are in fact more naturally defined on ``non-oriented'' paths, meaning that the paths are defined up to time-reversal (the path joining $x$ and $y$ can be viewed alternatively as from $x$ to $y$ or as from $y$ to $x$). Recall also that Brownian loop-measures are similarly  most naturally viewed as measures on unrooted loops -- see for instance \cite {LW,W1,WP}.

We will try to keep the narrative in this proof as intuitive as possible, in order to emphasize how it can be reduced to loop-soup decompositions in the discrete setting. The reader favouring compact analytic proofs may view this entire section as a warm-up to the second proof that we present in Section \ref {SD}.

\medbreak

Let us first briefly recall two different ways to approach the conditional distribution of the GFF in $G$ given
 $\{ \Gamma (x) = a, \Gamma (y)= b \}$ for $a, b > 0$. One will be to condition directly on $\Gamma$. The second one will be to first discover $\Lambda = \Gamma^2$ via the loop soup, and to then choose the signs independently.
The idea to combine and compare these two ways has been used several times when deriving properties of loop-soup clusters (see e.g. \cite {MSW,MW} in the $\kappa=4$ cases, or \cite {L2,QW,LQW}).

\begin {enumerate}
\item The first option  is to view
$\{ x, y\}$ as part of the (same wired) boundary (which is possible, as $a$ and $b$ have the same sign), and to use Dynkin's isomorphism. The conditional distribution of $\Gamma$ is then the sum of the GFF in $G \setminus \{ x, y \}$ with the harmonic function in that set with boundary values $a$ and $b$ at $x$ and $y$ respectively.
By Dynkin's theorem, the conditional distribution of the square of the GFF will be given by the sum of the following independent inputs:

- The square of the GFF with zero boundary conditions in $G \setminus \{x,y\}$.

- The occupation time of an (independent) Poisson point process of excursions away from the set $\{ x, y \}$ with well chosen intensities.
This Poisson point process of excursions will contain:
\begin {itemize}
 \item A Poisson point process of excursions from $x$ to $x$ in $G \setminus \{ y\}$ (with infinitely many small ones) with intensity $a^2 \nu_{x, x, G \setminus \{y\}}$ and a Poisson point process of excursions from $y$ to $y$ in $G \setminus \{x \}$ with intensity $b^2 \nu_{y,y, G \setminus \{x\}}$.
 \item A Poisson point process of excursions joining $x$ and $y$ with  $ab \nu_{x , y, G}$. The finite number $N=N_{x,y}$ of such  excursions will therefore be a Poisson random variable with mean given by the total mass of this measure.
 \end {itemize}
Note that in this picture, the excursions from $x$ to $x$ are not allowed to visit $y$ (and the ones from $y$ to $y$ are not allowed to visit $x$). Mind also that in this description, there is no parity constraint on $N$. Note finally that if $N \ge 1$, then $x$ and $y$ are necessarily in the same sign-cluster. On the other hand, when $N=0$, both options $x \leftrightarrow y$ and $x \not\leftrightarrow y$ are possible, depending on whether the event that at least one sign-cluster of the zero-boundary GFF does intersect both an excursion away from $x$ and an excursion away from $y$ occurs or not. In particular, when one conditions on $\{ N = 0 \} \cap \{ x \not\leftrightarrow y \}$, then the various excursions and loops are not independent.

\item
Alternatively, one can start with the loop-soup in $G$, and decompose it into the loops that do hit $\{x, y\}$ and the ones that don't. The occupation time  of the latter part will give rise to the same square of GFF in $G \setminus \{ x,y\}$ with zero boundary conditions as in (1). The loops that hit $x$ but not $y$ will give rise to (many) excursions away from $x$ that do not hit $y$, the loops that hit $y$ but not $x$ will give excursions away from $y$ that do not hit $x$.  The loops that hit both $x$ and $y$ will give rise to excursions away from $\{ x, y \}$ of three types: Excursions away from $x$ that do not hit $y$, excursions away from $y$ that do not hit $x$ and   a necessarily even number of excursions joining $x$ and $y$. The resampling and rewiring ideas suggest that when conditioned on the number of excursions joining $x$ and $y$, they will be distributed like independently chosen Brownian excursions. Similarly, when conditioned on the total local times at $x$ and $y$ to be $a^2$ and $b^2$ respectively, then the excursions away from $x$ and away from $y$ should end up been chosen according to a Poisson point process of excursions as in Description (1).

Once one has constructed the loop-soup, one can then define the GFF by choosing a sign independently for each loop-soup cluster. Hence, the conditional probability that $\{\Gamma (x)=a,  \Gamma (y)=b\}$ will be $1/2$ or $1/4$ depending on whether $x \leftrightarrow y$ or not.
\end {enumerate}

The first step of the proof is the following fact that gives the complete picture for Description (2):
\begin {lemma}[The parity lemma]
\label {parity1}
In Description (2), conditionally on $\Lambda (x)= a^2$ and $\Lambda (y) = b^2$, the decomposition of the loops intersecting $\{ x, y\}$ into excursions away from $\{ x ,y\}$ leads to the very same decomposition as  Description (1), except that the number of excursions  $N_{x \leftrightarrow y}$ joining $x$ to $y$ is now {\em conditioned to be even}.
\end {lemma}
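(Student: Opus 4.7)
The plan is to decompose the loop-soup ${\mathcal L}$ according to which loops touch $\{x,y\}$, to cut each touching loop at its successive visits to $\{x,y\}$, and then to identify the resulting collection of excursions with the inputs of Description (1) up to the extra parity constraint. The even-parity restriction will appear as a forced consequence of cycle-closure inside each individual loop; everything else should follow from Poisson thinning and the loop-soup rewiring property.

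First I would apply Poisson thinning of ${\mathcal L}$: the sub-soup ${\mathcal L}_0$ of loops avoiding $\{x,y\}$ is an independent critical loop-soup in $G\setminus\{x,y\}$, and its occupation time delivers the first input of Description (1). Since all contributions to $\Lambda(x)$ and $\Lambda(y)$ come from the complementary sub-collection ${\mathcal L}_{xy}$ of loops that touch $\{x,y\}$, the conditioning $\{\Lambda(x)=a^2,\Lambda(y)=b^2\}$ is a condition on ${\mathcal L}_{xy}$ only, and the analysis can be localized there.

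Cutting each $\ell\in{\mathcal L}_{xy}$ at its successive visits to $\{x,y\}$ turns $\ell$ into a cyclic sequence of excursions of three types: $x$-to-$x$ in $G\setminus\{y\}$, $y$-to-$y$ in $G\setminus\{x\}$, and excursions joining $x$ and $y$. Because this sequence is cyclic, the number of times the ``current base'' along $\ell$ flips from $x$ to $y$ equals the number of flips from $y$ to $x$, so $\ell$ contributes an even number of $x$-$y$ excursions. Summing over the almost surely finitely many loops in ${\mathcal L}_{xy}$ that visit both $x$ and $y$, the total number $N_{x\leftrightarrow y}$ of $x$-$y$ excursions is almost surely even; this accounts for the parity constraint in the statement.

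The main step, and the main obstacle, is the identification of the resulting excursion multiset as three independent Poisson point processes with the prescribed intensities. I would invoke the loop-soup rewiring property of \cite{W1} together with the It\^o-type excursion decomposition across the boundary set $\{x,y\}$ (in the spirit of \cite{LJ3}): once one forgets the cyclic order of excursions inside each loop, the push-forward of the loop measure on ${\mathcal L}_{xy}$ under the cutting map is a Poisson point process whose intensity is the disjoint sum of $\nu_{x,x,G\setminus\{y\}}$, $\nu_{y,y,G\setminus\{x\}}$ and $\nu_{x,y,G}$, subject only to the global parity constraint derived above. A direct Ray--Knight-type reweighing then conditions on $\{\Lambda(x)=a^2,\Lambda(y)=b^2\}$ and transforms the three intensities into $a^2\nu_{x,x,G\setminus\{y\}}$, $b^2\nu_{y,y,G\setminus\{x\}}$ and $ab\,\nu_{x,y,G}$ respectively (the $ab$ factor appearing because an $x$-$y$ excursion contributes local time at both $x$ and $y$). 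This matches Description (1) exactly, with the announced even-parity constraint as the only remaining discrepancy. The analytically delicate piece is precisely this rewiring/excursion-decomposition step: it is what allows the three families of excursions to be treated as independent PPPs conditionally on the two local times, despite originating from a single non-Poissonian partition into closed loops.
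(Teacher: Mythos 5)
Your setup matches the paper's ``Description (2)'' exactly: the Poisson thinning of the loops avoiding $\{x,y\}$, the cutting of the remaining loops at their successive visits to $\{x,y\}$, and the observation that cyclic closure forces each loop (hence the total) to contribute an even number of $x$--$y$ excursions are all correct, and the last point is indeed where the parity constraint comes from. The difficulty is that the entire quantitative content of the lemma sits in the step you defer to ``rewiring plus It\^o-type excursion decomposition plus Ray--Knight reweighing'', and that step is asserted rather than proved. Two concrete problems. First, your intermediate claim is false as stated: before conditioning on the local times, the push-forward of the cutting map is \emph{not} a Poisson point process with intensity $\nu_{x,x,G\setminus\{y\}}+\nu_{y,y,G\setminus\{x\}}+\nu_{x,y,G}$ subject only to a parity constraint. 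With the normalization used here, a Poisson point process of excursions with intensity $c\,\nu_{x,x}$ has total local time exactly $c$ at $x$ (this is how the Dynkin decomposition works), whereas $\Lambda(x)$ for the loop soup is a randomly distributed squared Gaussian; so the Poisson structure with fixed intensities can only hold \emph{conditionally} on $(\Lambda(x),\Lambda(y))=(a^2,b^2)$ -- which is precisely the statement to be proved, and cannot be recovered by ``reweighing'' an unconditional PPP that does not exist. Second, the heart of the lemma is that, given the two local times, the number of $x$--$y$ excursions is Poisson with mean exactly $ab\,\|\nu_{x,y,G}\|$ conditioned to be even, and that the three excursion families are conditionally independent; your parenthetical ``the $ab$ factor appears because an $x$--$y$ excursion contributes local time at both endpoints'' is a plausibility heuristic, not a derivation of the multiplicative $ab$ structure or of the conditional independence.

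The paper fills exactly this gap (it notes that the statement is essentially Proposition 7 of \cite{W1}, whose proof was ``left as a simple exercise'' there, and then supplies the missing details): it projects onto the trace of the loop soup on $\{x,y\}$, which is the loop soup of a two-state continuous-time Markov chain; discretizes that chain with jump probabilities of order $1/K$; invokes the explicit pairing/enumeration formula for the law of the number of crossings of an edge by a discrete loop soup given the numbers of visits $A$ and $B$ of the two endpoints (Proposition 2.46 of \cite{WP}); and lets $K\to\infty$ with $A\sim a^2K$ and $B\sim b^2K$. That limit is where the $(ab)^{2t}/(2t)!$ weights -- i.e.\ the Poisson conditioned-to-be-even law with mean proportional to $ab$ -- actually come from, and the coupling with the cable-graph Brownian motion then upgrades the statement about jump counts to the statement about the full excursion point processes. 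Some computation of this kind (or a precise citation of a result that already contains it) is needed before your outline becomes a proof.
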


Actually, a more general statement with $n$ points $x_1, \ldots, x_n$ holds: The conditioning would then be that if $N_{i,j} = N_{j,i}$ is the number of excursions joining $x_i$ to $x_j$ in $G \setminus \{ x_1, \dots, x_n\}$, then $(N_{i,j})_{1 \le i < j \le n}$ has the law of independent Poisson random variables with respective means $a_ia_j$ times the mass of the corresponding excursion measure, where this collection is conditioned by the event that for all $i$, $\sum_{j \not=i} N_{i,j}$ is even. In fact, this general result (and therefore also this parity lemma) is almost exactly Proposition 7 in \cite {W1}. The one slight difference in the formulation there is also that it describes only the conditional law of the excursions from $x$ to $y$ (or from any $x_i$ to any $x_j$ for $i \not= j$), and does not say anything about the excursions from one of the points to itself. It is however not difficult (for instance building on the discrete setting) to see that these point processes of excursions will be independent (and of course the same for the loops that do not intersect the marked points, since they are anyway independent from $(\Lambda(x_i))_{i \le n}$).
Since the details of the proof there were ``left as a simple exercise'' (i.e., the part that derives it from its analog for discrete time loop-soups),
we will provide some more details about this proof.

But before that, let us first  show how to use it in order to deduce the switching result. The basic idea is to notice that, when one conditions on $x \leftrightarrow y$, then the law of the configuration described in (1) i.e., via an unconditioned Poisson random variable $N$, is the same as the law of the configuration obtained via (2), i.e., when the Poisson random variable is conditioned to be even.
As a consequence, the same is true when one conditions on the remaining event $\{x \leftrightarrow y\} \setminus \{ N \hbox { is even}\}$, i.e., when $N$ is conditioned to be odd.

\begin {proof}[Proof of Theorem \ref {mainthm} using Lemma \ref {parity1}]
We fix $a$ and $b$ as in the theorem, and consider the setup of the first description above (where the number of excursions joining $x$ and $y$ is an unconditioned Poisson random variable). The probability measure $P$ will correspond to this setup.
We let $m$ be the mean value of the Poisson random variable $N=N_{x \leftrightarrow y}$ (which is the $ab$ times the total mass of $\nu_{x ,y, G}$) in the first description. We denote by:
\begin {itemize}
 \item $E$ and $O$ the events that $N$ is even or odd respectively.
 \item  $Q_-$ the event that $x$ and $y$ are not in the same sign-cluster.
 \item $E_+$ the event that $N$ is even and $x$ and $y$ are in the same sign-cluster.
\end {itemize}

Obviously, since $N$ is a Poisson random variable with mean $m$, we have that
$$P [ E] = \frac 1 2  ( 1 + e^{-2m}), \ P[O] = \frac 1 2 ( 1- e^{-2m})
\hbox { and }
P [ E ] - P [O] = e^{-2m}.$$

In  Description (2), when one conditions the total local time at $x$ and $y$ to be $a^2$ and $b^2$, one does not yet know whether the signs of $\Gamma (x)$ and $\Gamma (y)$ are the same. So, in order to get the same law as in Description (1), one has to reweigh the event $Q_-$ by a factor $1/2$ because on $Q_-$, the conditional probability that $\Gamma(x)$ and $\Gamma(y)$ have the same sign is $1/2$. In other words, if $P_E$ denotes the law of the configuration (with excursions and loops) from Description (2) where $N$ is conditioned to be even, we get that for any event $A$ depending on the occupation field,
\begin {equation}
 \label {eq1}
 P[ A] = \frac 1 {1 - (P_E [Q_-]/2)} \times
\left( P_E [ A \setminus Q_-] + \frac {P_E [A \cap Q_-]}{2}\right).
\end {equation}
In particular, for $A = Q_-$, we get that
$$ P [ Q_-] = \frac { P_E [Q_-]}{ 2 - P_E [Q_-]}.$$
But $P_E [ Q_-] = P [ Q_-] / P [E]$, so that
$$P[Q_-] = 2P[E]- 1 =  e^{-2m} = \exp ( -2m) = P [E] - P [ O].$$
Note that
$$P [E \setminus Q_- ] = P[E] - P[Q_-] = 1- P [E] = P [O] .$$
So, we can conclude that if one constructs a conditioned GFF via Description (1),
$$ P [ E \mid x \leftrightarrow y , \Gamma^2(x) = a^2, \Gamma^2 (y) = b^2 ] = P [ O \mid x \leftrightarrow y , \Gamma^2(x) = a^2, \Gamma^2 (y) = b^2 ] = \frac {1}{2}.$$

We now rewrite (\ref {eq1}) this time for $A \subset \{ x \leftrightarrow y\}$ such that $P[A] \not= 0 $:
$$
\frac {P_E [ A]}{P [A]}  =   1 - \frac {P_E [Q_-]}{2} .$$
We note that this quantity is independent of $A$. In particular, it holds also for the set $x \leftrightarrow y$ itself, so that
$$
\frac {P_E[A]}{P_E [ x \leftrightarrow y]  } =
\frac {P[A]}{P [ x \leftrightarrow y]}.$$
This last identity of course holds also when $P[A]=0$. We can therefore conclude that the conditional distribution of the (conditional) field (we always work conditionally on $\Gamma (x)=a$ and $\Gamma (y) = b$) given that $N$ is even and $x \leftrightarrow y$ is the same as its conditional distribution given $x \leftrightarrow y$ only. It follows that it is also the same as its conditional distribution given that $x \leftrightarrow y$ and $N$ is odd (which is just the conditional distribution given that $N$ is odd), which completes the proof.
\end {proof}

\begin {remark}
We note that as in \cite {LQW}, the proof does not only provide the identity of the conditional distributions, but also the fact that in the first description, the contributions of $P[E \cap \{ x \leftrightarrow y \}]$ and $P[O]$ to $P[x \leftrightarrow y]$ are equal, so that there exists a measure-preserving bijection (under $P$) between $E \cap \{ x \leftrightarrow y\}$ and $O$ that preserves also the occupation times.
\end {remark}

We now turn our attention to the proof of Lemma \ref {parity1}: We will detail the proof outlined in \cite {W1}. A first step is to view this as
 a   result for loop-soups of a simple continuous-time Markov chain $X$ with two states $x$ and $y$ (and a cemetery boundary state $\partial$) defined as follows: When at $x$, the chain jumps to $y$ with rate $\alpha>0$ and it dies with rate $u_x$. When at $y$, the chain jumps to $x$ with the same rate $\alpha$ and it dies with rate $u_y$. Indeed, when following the trace of the cable-graph Brownian motion on the set $\{ x , y \}$, which is all one is interested when counting its excursions between $x$ and $y$, one has exactly a Markov chain of this type (the time of the latter is the local time accumulated by the former at $x$ or $y$).

We consider the (critical) loop-soup ${\mathcal L}$ corresponding to this  continuous-time chain on $\{ x, y\}$, as in the settings introduced by Le Jan (see for instance \cite {WP} for a description of such continuous-time discrete loop-soups). So, the loops that visit both $x$ and $y$ have exponential waiting times at each visit of $x$ and $y$, while the loops that visit only one point have time-lengths distributed according to an (infinite) measure (i.e., there will be infinitely many such small stationary loops). The occupation time measure of this loop-soup is then distributed as the square of the GFF associated to the Green's function of this Markov chain.
The relation between the cable-graph Brownian motion and this continuous-time discrete loop-soup shows that one can view this loop-soup as the trace on $\{x,y\}$ of the loop soup on the cable graph $G$. So, part of the parity lemma will boil down to the following fact:

\begin {lemma}
\label {parity2}
Conditionally on $\Lambda_{\mathcal L} (x) = a^2$ and $\Lambda_{\mathcal L}(y) = b^2$, the number of jumps between the two points $y$ and $x$ in ${\mathcal L}$ is distributed as a Poisson random variable with intensity $\alpha ab$ {\em conditioned to be even}.
\end {lemma}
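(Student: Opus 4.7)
The plan is to compute the joint moment generating function $E\bigl[e^{-\mu_x \Lambda(x) - \mu_y \Lambda(y)} s^{N}\bigr]$ directly, and then to invert it against the explicit bivariate density of $(\Lambda(x),\Lambda(y))$ in order to read off the conditional law of $N$.

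The first step is to encode the jump weight $s^{N}$ by modifying the two-state generator. Write $\lambda_x = \alpha + u_x$, $\lambda_y = \alpha + u_y$, and let
$$L = \begin{pmatrix} \lambda_x & -\alpha \\ -\alpha & \lambda_y \end{pmatrix}, \qquad \tilde{L}(s) = \begin{pmatrix} \lambda_x & -s\alpha \\ -s\alpha & \lambda_y \end{pmatrix}.$$
Multiplying each off-diagonal entry by $s$ weights every jump by $s$ in the loop measure while leaving holding-time distributions (i.e.\ the diagonal of $L$) untouched. Using Le Jan's determinant identity for the intensity-$\tfrac{1}{2}$ loop-soup, or by a direct computation from the rooted-loop formula $\int_0^\infty \tfrac{1}{T} P_{x_0}[\,\cdot\,]\,dT$, the trivial-loop contributions at each vertex yield $\tfrac{1}{2}\log(\lambda_{x_0}/(\lambda_{x_0}+\mu_{x_0}))$ by Frullani, and the non-trivial loops, which are forced to be alternating sequences of the form $(xy)^k$ of even length $2k$, sum to $-\tfrac{1}{2}\log\bigl[1 - s^2\alpha^2/((\lambda_x+\mu_x)(\lambda_y+\mu_y))\bigr]$. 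Exponentiating gives the compact expression
$$E\bigl[e^{-\mu\cdot\Lambda}\,s^{N}\bigr] = \sqrt{\frac{\det L}{\det(\tilde{L}(s)+M)}} = \sqrt{\frac{\lambda_x\lambda_y - \alpha^2}{(\lambda_x+\mu_x)(\lambda_y+\mu_y) - s^2\alpha^2}},$$
with $M = \mathrm{diag}(\mu_x,\mu_y)$.

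The second step is to match this with Gaussian densities. The joint density of $(\Gamma(x)^2,\Gamma(y)^2) = (\Lambda(x),\Lambda(y))$ is obtained by summing the four sign branches $\Gamma(x) = \pm a$, $\Gamma(y) = \pm b$ of the bivariate Gaussian with precision $L$, giving
$$f_\Lambda(a^2,b^2) = \frac{\sqrt{\det L}}{2\pi\,ab}\,\exp\bigl(-\tfrac{1}{2}\lambda_x a^2 - \tfrac{1}{2}\lambda_y b^2\bigr)\,\cosh(\alpha ab),$$
and likewise $f_{\Lambda_s}$ for the squared Gaussian with precision $\tilde{L}(s)$, obtained by replacing $\alpha$ by $s\alpha$. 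Since the MGF factors as $\sqrt{\det L/\det\tilde{L}(s)}\cdot E_s[e^{-\mu\cdot\Lambda_s}]$, inverting Laplace in $(\mu_x,\mu_y)$ and letting the $\sqrt{\det}$ prefactors cancel yields
$$E\bigl[s^{N}\,\bigm|\,\Lambda(x)=a^2,\Lambda(y)=b^2\bigr] = \frac{\cosh(s\alpha ab)}{\cosh(\alpha ab)}.$$
Expanding the right-hand side as $\sum_{k\ge 0} s^{2k}(\alpha ab)^{2k}/[(2k)!\cosh(\alpha ab)]$ identifies the conditional distribution as Poisson with mean $\alpha ab$ conditioned to be even, which is exactly the claim.

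The main obstacle is the careful justification of the MGF in the first step, in particular the rooted/unrooted loop combinatorics and the infinite mass of trivial loops. For a general graph this is the content of Le Jan's identity, but for the two-state chain the cycle structure is so transparent (loops visiting both states alternate and thus have even length $2k$, with exactly two rootings $x$ and $y$ contributing $\tfrac{1}{2k}\cdot 2 \cdot (s\alpha)^{2k}/((\lambda_x+\mu_x)(\lambda_y+\mu_y))^k$) that the entire computation reduces to a geometric series together with a single Frullani integral for the divergent trivial-loop piece.
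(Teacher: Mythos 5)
Your proof is correct in its overall architecture but follows a genuinely different route from the paper's. The paper proves this lemma by discretization: it introduces a discrete-time chain $X^K$ with transition probabilities $\alpha/K$, invokes the explicit combinatorial formula for the joint law of the numbers of visits and crossings in a discrete loop-soup in terms of pairings (Proposition 2.46 of \cite{WP}), and then passes to the limit $K\to\infty$ with $A\sim a^2K$, $B\sim b^2K$, handling the conditioning on the occupation times via a monotone coupling with Poisson variables. Your route --- weighting each jump by $s$ in the generator, computing the joint transform $E[e^{-\mu\cdot\Lambda}s^{N}]$ by Le Jan's determinant identity, and dividing by the explicit $\cosh$-density of the squared bivariate Gaussian --- avoids the discretization and the pairing combinatorics entirely and produces the conditional generating function $\cosh(s\alpha ab)/\cosh(\alpha ab)$ in closed form. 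It is in fact much closer in spirit to the paper's second derivation in Section~\ref{SD}, where the same ratio $\cosh(m^{(k)})/\cosh(m)$ appears; the difference is that there the even/odd bookkeeping is done by averaging the two sign branches $\Phi_1\pm\Phi_2$ of the conditioned GFF, whereas you insert the counting variable $s$ directly into the off-diagonal entries. Your version has the merit of isolating exactly the statement of the lemma (the law of $N$) without any discussion of the excursion decomposition.

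One point you must nail down is the normalization, because it enters the Poisson mean. The identity $E[e^{-\mu\cdot\Lambda}s^{N}]=\sqrt{\det L/\det(\tilde L(s)+M)}$ is the transform of the occupation field of the intensity-$\tfrac12$ loop soup in Le Jan's time normalization, for which the occupation field is distributed as $\tfrac12\Gamma^2$ (check $s=1$: $\det(I+GM)^{-1/2}=E[e^{-\frac12\langle\mu,\Gamma^2\rangle}]$). Your density $f_\Lambda$, on the other hand, is that of $\Gamma^2$ itself, which is the paper's convention. As written, the two halves of your argument therefore use local times differing by a factor of $2$, and the Laplace inversion at $s=1$ does not literally close ($\det(L+M)$ versus $\det(L+2M)$); carried out consistently in a single convention, the answer is Poisson$(\alpha ab)$ in the paper's normalization and Poisson$(2\alpha ab)$ in Le Jan's, so the factor does affect the constant in the statement. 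The structure of the argument is unaffected: take $E[e^{-\mu\cdot\Lambda}s^{N}]=\sqrt{\det L/\det(\tilde L(s)+2M)}$ for $\Lambda=\Gamma^2$ and everything cancels exactly as you describe, yielding the stated $\cosh(s\alpha ab)/\cosh(\alpha ab)$ and hence the Poisson$(\alpha ab)$ variable conditioned to be even.
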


\begin {proof}[Proof of Lemma \ref {parity2}]
Our proof will be based on the ideas of  Proposition 2.46 from \cite {WP} or \cite {W1} (see also \cite {L} for analogous statements), which gives the explicit expression for the number of jumps on edges for discrete loop-soups (with discrete time).
We each (large) constant  $K$, we define the discrete-time Markov chain $X^K$ on the same set $\{ x, y ,\partial\}$ with  jump probabilities  $p(x,y) = p ( y,x) = \alpha/K$, $p (x,x) = 1-  (u_x + \alpha)/ K$, $p(x, \partial)
= u_x / K$, $p(y, \partial) = u_y / K$, $p(y, y) = 1 - (u_y + \alpha)/K$ (the point $\partial$ is an absorbing state). In the limit when $K \to \infty$, the number of steps of this discrete chain $X^K$ divided by $K$ then corresponds to the time for the continuous-time Markov chain $X$.

The first step is to notice, just as in Proposition 2.46 of \cite {WP}, that for a discrete loop-soup on this graph, the probability for having $2t$ jumps joining $x$ and $y$ (this number has to be even since each loops jumps an even number of times on this edge) with $A$ and $B$ total visits of $x$ and $y$ is a constant multiple of
$$ p(x,x)^{A-t} p(y,y)^{B-t} p(x,y)^t p(y,x)^t
\frac{ {\mathcal P}(2A) {\mathcal P}(2B)}{t! (A-t)! (B-t)!}$$
where ${\mathcal P} (2n)=(2n-1)(2n-3) \ldots 3$ denotes the number of possible pairings of a set with $2n$ points.
In particular, if one conditions on $A$ and $B$, the conditional
probability of having $t$ jumps between $x$ and $y$ is a constant (depending on $A$ and $B$) times
\begin {equation}
 \label {edgeprobab}
p(y,y)^{-t} p(x,x)^{-t} p(x,y)^{2t} \frac {1}{t! (A-t)! (B-t)!}.
\end {equation}
One simple way to see this is to replace each edge into a large number $M$ of ``parallel'' edges, so that with high probability, no edge is used more than once by the discrete loop-soup. One can then enumerate the number of such possibilities that do not use any edge more than once, and then letting $M \to \infty$ gives (\ref {edgeprobab}).

We are now going to apply this for $X^K$ and let $K \to \infty$.
It is easy to see one the one hand that in this limit, the discrete loop-measure will converge to the continuous-time loop measure (with the renormalized number of steps converging to continuous time). One can indeed simply couple everything with the cable-graph loop-soup in $G$. One can for instance for $\eps = \eps (K)$, discretize the cable-graph Brownian motion $B$ started from $x$ as follows: One chooses $\tau_0= 0$, and then $\tau_1$ to be the first time at which either the local time at $x$ reaches $\eps$ or $B$ reaches $y$ or $\partial$. Then, $X_1 = B_{\tau_1}$. Then, similarly, for each $n \ge 1$, one lets $\tau_{n+1}$ be the first time after $\tau_n$ at which either the local time at $X_{n}$ has increased by $\eps$, or $B$ has reached $\{ x, y , \partial \} \setminus \{ X_n \}$. One then defines $X_{n+1} = B_{\tau_{n+1}}$.   In this way (when $\eps(K)$ is well-chosen) one indeed has exactly the  jump probabilities of $X^K$. The definition of local times shows also that renormalizing the number of steps of $X^K$ converges almost surely to the local times (at $x$ and $y$) of $B$ when $K \to \infty$.

On the other hand, letting formally $K \to \infty$ in Formula (\ref {edgeprobab}) with $A=A(K) \sim a^2 K$ and $B=B(K) \sim b^2 K$, we see that that the conditional probability to jump $2t$ times along the edge joining $x$ and $y$ tends to a constant (depending on $\alpha$, $a$ and $b$ only) multiple of $ \alpha^{2t} (a^2 b^2)^t / (2t)!$ because
$$ p(y,y)^{-t} p(x,x)^{-t} p(x,y)^{2t} \frac {1}{t! (A-t)! (B-t)!} \sim 1 \times 1 \times (\alpha/K)^{2t} \frac {(a^2K)^t (b^2K)^t}{A! B!} \sim \frac {\alpha^{2t} a^{2t} b^{2t}}{A! B!}.$$
It is then easy to deduce the statement of the lemma. To control the conditioning on the occupation times in the scaling limit, one can for instance first see that for any $0< a_1 < a_2$ and $0 < b_1 < b_2$, any subsequential limit when $K \to \infty$ of the law of the number of jumps $N$ along the edge joining $x$ and $y$ when the conditioning is on $\Lambda (x) \in (a_1^2, a_2^2)$ and $\Lambda(x) \in ( b_1^2, b_2^2)$ can be coupled with Poisson random variables $P_1$ and $P_2$ with respective means $a_1b_1 \alpha$ and $a_2b_2 \alpha$ and conditioned to be even in such a way that $P_1 \le N \le P_2$.
\end {proof}

We can now finally derive Lemma \ref {parity1}, which will also conclude the proof of Theorem \ref {thm2}.
\begin {proof}[Proof of Lemma \ref {parity1}]
One notices that
in the coupling between the cable-graph loop-soup and the continuous-time loop soup on the discrete graph, all items are in one-to-one correspondence (the loops of the former that go through $x$ and/or $y$ are exactly loops of the latter, and the times of the latter correspond to the local times of the former). Furthermore, in the aforementioned coupling between the Brownian motion and the Markov process $X$, the limit when $\eps \to 0$ of the parts of the Brownian paths corresponding to the jumps from $x$ to itself will converge to a Poisson point process of excursions away from $x$ in $G \setminus \{ y \}$ with intensity
$a^2 \nu_{x,x,G \setminus \{ y\} }$, and the corresponding result will hold for the parts corresponding to the jumps from $y$ to $y$.
\end {proof}

\begin {remark}
 \label {compact}
We can note that Theorems \ref {thm1} and \ref {thm2} both deal with loop-soups that are conditioned on their occupation time at a given point (or alternatively the GFF conditioned to have a certain finite value at a given point). For instance, Theorem \ref {thm1} can be viewed as a statement about the loop-soup in $G \setminus \{x\}$ or alternatively about the GFF in $G \setminus \{ x \}$ which is well-defined also when Green's function on $G$ is infinite (for instance, when the graph $G$ is compact with no boundary point, or the case $G = \Z^2$) because the Green's function in $G \setminus \{ x\}$ is automatically finite.  Similarly, conditioning on the value of $\Lambda (x)$ to take a given positive value $a^2$ amounts (on the loop-soup side) to adding a Poisson collection of excursions away from $x$ (and possibly to regroup them into loops if one wants to describe this as a loop-soup), or to consider the GFF in $G \setminus \{ x \}$ with boundary condition $\pm a$ at $x$.
So, it is possible to make sense of the Brownian loop-soup in $G$ conditioned on $\Lambda (x) =a^2$ also when the Green's function on $G$ is infinite, and to then see that Theorem \ref {thm2} and its proof still hold in the same way, i.e., that Theorems \ref {thm1} and \ref {thm2} are actually valid for any cable-graph.
\end {remark}

\begin {remark}
The paper \cite {Aid1} by Elie A\"\i d\'ekon contains some closely related considerations, see in particular his Theorem 1.4 where parity shows up -- it does not address the switching property in terms of conditioning on long connections, but one a posteriori detect the switching property hiding near-by -- one can for instance try to see what happens when one adds an additional edge to his setting. As we shall see, he also looked into the direction that we will describe in Section \ref {Se5}.
\end {remark}

\begin {remark}
 \label {edgetograph}
 We can note that with this approach, the proof for a general $G$ does not differ much from the case where the only connection between $x$ and $y$ goes along a single edge of the graph -- replacing the entire graph by a single edge with the same ``effective resistance'' (which corresponds to ideas developed in \cite {LW3}) works well in a bijective way, both in terms of excursions and loop-soups (and the parity of excursions between $x$ and $y$), the connection via $\Lambda$ or the GFF correlation functions.
\end {remark}

\section {The derivation in the spirit of Dynkin's isomorphism}
\label {SD}

\subsection {The proof}

Let us now explain how to approach the proof of the switching property via explicit computations reminiscent of Dynkin's isomorphism, i.e., how to interpret the various terms appearing in those computations in terms of Poisson point process of excursions with even or odd numbers (since this is what is what the outcome of the parity and switching properties are).

The (by now) classical idea is that one can compute explicitly the Laplace transforms for the occupation time fields of all quantities involved (loop-soups, conditioned loop-soups, excursions).  In that setting, the switching will appear via the appropriate interpretation of the terms in the expansion of the exponential.
While this approach seems rather compact and of course in some sense equivalent to the previous one, it is possibly a bit less transparent!
Many aspects of this description will be reminiscent of our paper \cite {LQW} with Matthis Lehmk\"uhler and Wei Qian, see Section \ref {lqw}.

Let us consider a cable-graph GFF $\Gamma$ on $G$. When ${\mathcal M}$ is a random non-negative continuous function on $G$, we can define, for each non-negative function $k$ with compact support, the quantity
$$ {\mathcal E}_k ( {\mathcal M} ) := E [ \exp ( - \int k(z) {\mathcal M} (z) dz ) ]$$
(here and in the remainder of this section, $dx$ denotes the Lebesgue measure on the cable graph and the integrals will always be over all points in $G$).
The knowledge of ${\mathcal E}_k ( {\mathcal M})$ for all such functions $k$ clearly characterizes the law of ${\mathcal M}$ uniquely. On the other hand, for the random measures that are defined as occupation times of loop-soups or excursions, the quantity ${\mathcal E}_k [ {\mathcal M}]$ can be related to the corresponding quantities for loop-soups or excursions associated with the Brownian motion with killing rate given by $k$ (i.e., the Brownian motion is killed at rate $k(x)$ when it is at $x$), leading to expressions involving the Green's function for this new process. This idea lies at the core of Le~Jan's results \cite {LJ1,LJ2} (see e.g., \cite {LJ3,WP} for surveys).

\subsubsection* {Revisiting Dynkin's isomorphism}

Let us (for notational convenience) consider our two special points to be $x_1$ and $x_2$. We will fix $a_1, a_2 > 0$
Suppose now that $\Phi$ is a harmonic function on $G \setminus \{x_1, x_2 \}$, which is continuous at $x_1$ and $x_2$.  In this paragraph, $\Gamma_0$ will denote the GFF with zero boundary conditions at $\{ x_1, x_2 \}$, so that $\Gamma_0 + \Phi$ is in fact a GFF in $G$ conditioned on its values at $x_1$ and $x_2$ to be $\Phi(x_1)$ and $\Phi(x_2)$.  We can consider the field $(\Gamma_0 + \Phi)^2$ and simply expand
$(\Gamma_0+\Phi)^2 =
 \Gamma_0^2   + 2 \Gamma_0   \Phi + \Phi^2 $, and we can rewrite ${\mathcal E}_k ((\Gamma_0 + \Phi)^2)$ as
\begin {eqnarray*}
 \lefteqn {{\mathcal E}_k ((\Gamma_0 + \Phi)^2)}\\
&&={\mathcal E}_k ((\Gamma_0 )^2) \times
\exp ( -   \int \Phi^2 (z) k(z) dz ) \times E\left[ \exp (- 2 \int \Gamma_k (x) \Phi(x) k(x) dx ) \right],
\end {eqnarray*}
where $\Gamma_k$ is the Gaussian free field in $G \setminus \{ x_1, x_2\}$ associated with the Brownian motion with killing rate $k$, i.e., the GFF $\Gamma_0$  but with law reweighed via the Radon-Nikodym derivative term $\exp ( - \int \Gamma_0^2 (x) k(x) dx )$. This new GFF has a covariance function given by the Green's function $G_k$ of this Brownian motion with killing.
By inspecting the variance of the centered Gaussian variable $\int \Gamma_k (x) k(x)
\Phi (x) dx$, we see that the final term in the product has the explicit form
$$ \exp (  \int \int G_k (x, y) k(x) k(y) \Phi(x) \Phi (y)  dx dy ),$$ so that
$$
  {{\mathcal E}_k ((\Gamma_0 + \Phi)^2)}=
  {{\mathcal E}_k ((\Gamma_0)^2)} \times
\exp ( -   \int \Phi^2 (z) k(z) dz  + \int \int G_k (x, y) k(x) k(y) \Phi(x) \Phi (y)  dx dy )
.$$

Let us now consider the  two harmonic functions $\Phi_1$ and $\Phi_2$ in $G \setminus \{ x_1, x_2\}$ with respective boundary conditions $a_1 1_{x_1}$ and $a_2 1_{x_2}$ on $\{ x_1, x_2 \}$ (and that go to $0$ at infinity if $G$ is unbounded). The previous identity in the case of $\Phi = \Phi_1$ can be reinterpreted in terms of the loop-soup as follows:
\begin {itemize}
 \item The term
 $$ (1) := {\mathcal E}_k ((\Gamma_0 )^2)$$  corresponds the (Laplace transform of the) occupation time of a loop-soup in $G \setminus \{ x_1, x_2 \}$.
 \item The term
 $$(2)_i := \exp ( -   \int \Phi_i^2 (z) k(z) dz )  \times
 \exp ( \int\int  G_k (x, y) k(x) k(y) \Phi_i(x) \Phi_i (y)  dx dy )$$ for $i=1$
 corresponds to the (Laplace transform of the) occupation time of the Poisson point process of excursions away from $x_1$ in $G \setminus \{ x_1, x_2 \}$ (with intensity $a_1^2$ times the normalized excursion measure in that set).
\end {itemize}
We can also apply the same reasoning to $\Phi_2$ and then finally to $\Phi =\Phi_1 + \Phi_2$, and then comparing the obtained expression with the ones obtained separately for $\Phi_1$ and
$\Phi_2$, we can finally interpret the cross term
$$ (3) := \exp ( -  2 \int \Phi_1  (z) \Phi_2 (z) k(z) dz )  \times
 \exp (  2 \int \int G_k (x, y) k(x) k(y) \Phi_1(x) \Phi_2 (y)  dx dy )$$ as corresponding to the occupation time of the Poisson point process of excursions joining $x_1$ and $x_2$ in $G \setminus \{ x_1, x_2 \}$ with intensity $a_1 a_2 \nu_{x_1, x_2}$. So, we recognize in the expansion of
 $$ {\mathcal E}_k( ( \Gamma_0 + \Phi_1 + \Phi_2)^2 ) = (1) \times (2)_1 \times (2)_2 \times (3)$$ the four independent contributions (loop-soup, excursions away from $x_1$, excursions away from $x_2$ and Poisson point process of excursions joining $x_1$ and $x_2$) of Description (1)  in Section \ref {S3}.

 Mind that the expansion for $\Phi=\Phi_1 - \Phi_2$ does not have such a nice interpretation because of the different sign of the cross-term. However, all three other contributions are exactly the same, while the cross-term now simply gets an extra minus sign in the exponential, i.e., one has
 $$ {\mathcal E}_k( ( \Gamma_0 + \Phi_1 - \Phi_2)^2 ) = (1) \times (2)_1 \times (2)_2 \times(1/ (3)).$$  So when adding or subtracting this to the $\Phi_1+\Phi_2$ case will lead to factorizations, and as we shall now see we will get the parity lemma by interpreting the (appropriately weighted) sum of the two cases and the switching lemma by interpreting the (appropriately weighted) difference of the two cases.

Let us finally note that if one considers a Poisson point process of excursions joining $x_1$ and $x_2$ with intensity $a_1 a_2 \nu_{x_1,x_2}$, and $T$ its occupation field, then decomposing according to the Poisson number of excursions (we will from now on denote the total mass of $a_1 a_2 \nu_{x_1, x_2}$ by $m$), we get that
$$
(3) = E [ \exp (- \int  T(x) k(x) dx  )]
=\sum_{n \ge 0} e^{- m} \frac { (  \nu_{x_1,x_2} ( \exp ( - \int T_e (x) k(x) dx)))^{n}} {n!}
= \exp (  m^{(k)} -  m), $$
where $m^{(k)}$ is now the total mass of the excursions from $x_1$ to $x_2$ for the Brownian motion with killing rate $k$ (and $T_e$ denotes the occupation time field of the one excursion $e$ defined under $\nu$).
In view of what follows, we can also notice that if we condition the Poisson point process to have an even number of excursions, we get
$$
(3)':= \frac { \cosh (  m^{(k)}  )}{ \cosh ( m)},$$
while if we condition the process to have an odd number of excursions, we get
$$(3)'' := \frac { \sinh (  m^{(k) })}{ \sinh ( m)}.$$

\subsubsection* {Proof of the parity lemma}
Let $\Gamma$ be a GFF in $G$.
Let us define $\sigma_i$ to be the sign of $\Gamma(x_i)$. By simply inspecting the joint law (and density functions) of the Gaussian vector $(\Gamma(x_1), \Gamma (x_2))$, it is easy to check that
$$ p:=  P [ \sigma_1 =  \sigma_2  \mid  \Gamma(\partial_1)^2 = a_1^2, \Gamma (\partial_n)^2 = a_2^2 ]
= \frac {e^{  m}}{e^{ m} + e^{ -  m}}
$$
[this conditional probability is  described in terms of the densities of the law of the Gaussian vector $(\Gamma (x_1), \Gamma (x_2))$ at $(a_1, a_2)$ and $(a_1, -a_1)$ (i.e., in terms of $G(x_i, x_j)$ for $1 \le i \le j \le 2$), so that one just needs to relate this in terms of the excursion measures, which we safely leave to the reader].

If we now condition only on the square of the GFF at $x_1$ and $x_2$, we can then first condition on the signs of the GFF at these points, and then add the two contributions corresponding to the options $\Phi_1+\Phi_2$ and $\Phi_1 - \Phi_2$. In other words, (and the readers acustomed to the random current ideas may recognize the cancellations of the odd terms in the expansion of the exponential here):
\begin {eqnarray*}
 \lefteqn { E[ \exp ( - \int \Gamma^2 (x) k(x) dx) \mid \Gamma^2 (x_1) = a_1^2 , \Gamma^2 (x_2) = a_2^2 ]}\\
 &=& \left( p \times (1) \times (2)_1 \times (2)_2 \times (3)\right)  + \left((1-p) \times (1) \times (2)_1 \times (2)_2 \times (1/ (3)) \right)  \\
 &=& (1)\times (2)_1 \times (2)_2  \times   \left( \frac {e^{ m}}{e^{ m} + e^{ -  m}}
 \exp ( { m^{(k)} -  m})
 + \frac {e^{- m}}{e^{ m} + e^{ -  m}}
 \exp ({ -  m^{(k)} +  m})  \right)
 \\&=&
 (1) \times (2)_1 \times (2)_2 \times  \frac {\cosh (  m^{(k)})} { \cosh ( m)} .
\end {eqnarray*}
We recognize the last term as $(3)'$, corresponding to the Poisson point process of excursions joining $x_1$ and $x_2$  conditioned to have an even number of excursions. This is exactly the parity lemma.

\subsubsection* {Proof of the switching property}

This is  where we are going to use the fact that one can first sample the entire square of the GFF and then choose the signs of the sign-clusters independently. In that setting, we know that the contributions to the occupation times coming from $\Phi_1 - \Phi_2$ will be exactly the same as the one coming from $\Phi_1 + \Phi_2$ on the event $x \not\leftrightarrow y$.

In other words, we are this time getting the difference (instead of the sum) of the two same terms as above:
\begin {eqnarray*}
 \lefteqn { E[ 1_{x_1 \leftrightarrow x_2} \exp ( - \int \Gamma^2 (x) k(x) dx) \mid \Gamma^2 (x_1) = a_1^2 , \Gamma^2 (x_2) = a_2^2 ]} \\
 &=& (1) \times (2)_1 \times (2)_2  \times   \left( \frac {e^{ m}}{e^{ m} + e^{ -  m}} \exp (  m^{(k)} -  m)
 -  \frac {e^{- m}}{e^{ m} -  e^{ -  m}}
 \exp ( -  m^{(k)} +  m) \right)\\
 &=&
 (1) \times (2)_1 \times (2)_2 \times  \frac {\sinh (  m^{(k)})} { \cosh ( m)} .
\end {eqnarray*}
We recognize in the last term a multiple of  $(3)''$ corresponding the Poisson point process of excursions joining $x_1$ and $x_2$  conditioned to have an odd number of excursions (the remaining multiplicative term $\tanh ( m)$ is exactly the conditional probability that $x_1 \leftrightarrow x_2$, corresponding to the expression when $k=0$). So, we can conclude that
$$
E[ \exp ( - \int \Gamma^2 (x) k(x) dx) \mid \Gamma^2 (x_1) = a_1^2 , \Gamma^2 (x_2) = a_2^2 , x_1 \leftrightarrow x_2 ]
= (1) \times (2)_1 \times (2)_2 \times (3)''$$
which is exactly the switching property.

\subsection {Relation with the statements in \cite {LQW}}
\label {lqw}
Let us say a few more words about Theorem~2 and Theorem~6 of \cite {LQW}, their motivations and proofs. These results deal with the following set-up in the continuum: One considers a rectangle $R$ in the plane, a critical Brownian loop-soup in $R$, and a Poisson point process of excursions with some given fixed intensity away from the union of the two vertical sides of the rectangle (so this process will contain infinitely many small excursions away from the left boundary, infinitely many excursions away from the right boundary, and a Poisson number $N$ of excursions joining the left and the right boundary). Theorem 2 then says that conditionally on the event that there is a chain of excursions+loops joining the two vertical sides of the rectangle, the probability that $N$ is even is $1/2$ (and therefore the conditional probability that $N$ is odd is also $1/2$) and that furthermore, the conditional laws of the occupation field (of the union of the excursions and loops) when one further conditions on $N$ being odd or even are the same. So, in a way, this is the analogue of the switching property when one looks at $R$ and identifies the left boundary as one boundary point $x$ and the right boundary as one boundary point $y$, and one chooses one particular value for $a=b$. The analysis in \cite {LQW} proceeds as follows: the first part (which is an additional step that is specific to the continuous setting) is to argue (using partial explorations of loop-soups and conformal invariance) that this setup can indeed be viewed as the conformal image of the remaining-to-be-explored part of a partially discovered loop-soup, which is a non-trivial matter (other related work includes \cite {Q,QW2,MW}). This provides the motivation to look at this rectangular setup with the particular choice of boundary conditions. Then, the  analogue of Lemma 11, i.e., Theorem 2 in \cite {LSW} is derived via explicit computations for the law of the occupation fields in the same spirit as the proof that we just presented in this section. However, there are several features that make these explicit computations somewhat different than in our cable-graph case. An obvious first aspect is that the occupation time measures (and the square of the GFF) in the continuum have to be defined in a ``renormalized sense'' and that arguments based on the equivalence between sign-clusters and clusters of loops have to build on the existing literature about continuum two-dimensional loop-soups, such as \cite {L2,QW} that in turn build on \cite {LW,SW}.

Note also that in the continuous two-dimensional setting, a new feature (that was actually the main motivation for \cite {LSW}) appears: There is a subtle difference between the loop-soup clusters and their closure. The latter is sufficient to determine the occupation field, but it turns out (and this is the case in the odd-even switching) that some special exceptional points that are in a loop-soup cluster before the switching are not part of a loop-soup cluster anymore after the switching. This type of feature does not hold on cable-graphs. Indeed, if $x$ belongs to some Brownian loop in the loop-soup, then it is either in the interior of the trace of this loop, or it is one of the finitely many boundary points of the trace of this loop. But in the latter case, it is then almost surely in the interior of the trace of some other loops in the loop-soup. As a result, we see that almost surely, no point that is on the boundary of a loop-soup cluster (and there are almost surely only finitely many such boundary points for each loop-soup cluster) is actually on the trace of a Brownian loop in the loop-soup. So, the actual loop-soup clusters are a deterministic function of the occupation time field in the cable-graph case.

\subsection {The cable-graphs $\R_+$ and $[0,1]$}
\label{S63}
Let us make some remarks about the case where the cable-graph is $\R_+$. In this case, we in fact discussing squares of Bessel processes of integer dimensions in connection with Ray-Knight Theorem type identities, and the parity and switching properties become reinterpretations of some of Marc Yor's beloved identities in law for excursions, meanders etc. of the type described in \cite {Y} (and it is nice to see the probabilistic worlds of Aizenman and Pitman-Yor merging in this way!).
\begin {itemize}
 \item
One might start with the special limiting case of $\R_+$ with $x=0$ and $y = \infty$. In that context, the GFF is just Brownian motion, and for the $a=b=0$ case, we are conditioning $0$ and $\infty$ to be boundaries of the same GFF cluster, i.e. the Brownian motion/GFF to be positive on $(0,\infty)$. It is well-known that this intuition can be made rigorous and that the obtained process is a three-dimensional Bessel process.
On the other hand: The occupation time process of an (unconditioned) Brownian loop-soup in $\R_+$ is the square of a Brownian motion (since the Brownian motion is exactly the GFF in that case), and the occupation time intensity of the excursion from $0$ to $\infty$ (which is a three-dimensional Bessel process -- this one indexed by time!) is the square of a two-dimensional Bessel process when parameterized by space (there are several ways to see this -- one convoluted proof in the spirit of loop-soups is to consider the time-reversal of this process to be performing Wilson's algorithm in $\R_+$, and the collection of erased loops is then a loop-soup with twice the intensity of our ``standard'' loop-soup, see \cite {WP}). The sum of these two contributions is then indeed the square of a three-dimensional Bessel process.
\item
Similarly, when $x=0$, $a=b=0$ and $y$ is finite, one obtains the description of the Brownian excursion of time-length $y$ as a bridge of a three-dimensional Bessel process (from $0$ to $0$) which is David Williams' description of the excursion measure proved in \cite {R}.
\item The statement for non-negative $a$ and $b$'s provides some further ``Pitman-Williams-Yor'' identities in law involving Bessel bridges with randomly chosen dimensions in $\{ 3,7,11 \ldots\}$. More precisely, if we now consider the cable-graph $[z, \infty)$ (for some $z<0$), $x=0$ and $y=1$, the switching lemma states that for some explicit constant $C$ (that can be shown to equal to 1):
\begin {corollary}
\label {Cpy}
The law of a Brownian bridge $(X(t))$ on $[0,1]$ from $X(0)=a>0$ to $X(1) =b>0$ conditioned to be positive is identical to
the law of the square root of the sum of: (1) A squared Bessel process of dimension $0$ started from $a^2$ conditioned to hit $0$ before time $1$ (2) A time-reversed squared Bessel process of dimension $0$ started from $b^2$ at time $1$ and conditioned to hit $0$ on $(0,1)$, and (3) The square of a $(1+2\Delta)$-dimensional Bessel bridge from $0$ to $0$, where $\Delta$ is a Poisson random variable with mean $abC$ conditioned to be odd (i.e., one first samples this random variable $2\Delta+1$, and then the Bessel bridge of that dimension).
\end {corollary}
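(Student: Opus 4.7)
The plan is to derive Corollary \ref{Cpy} by specializing Theorem \ref{mainthm} to the cable graph $G = [0,1]$ with $\{0, 1\}$ as boundary, and $x = 0$, $y = 1$. By the spatial Markov property of the cable-graph GFF, the ambient graph being $[z, \infty)$ rather than just $[0,1]$ is immaterial once we condition on $\Gamma(0) = a$ and $\Gamma(1) = b$: the restricted field on $[0,1]$ is then a Brownian bridge from $a$ to $b$, and the sign-cluster conditioning $\{0 \leftrightarrow 1\}$ is (since $a, b > 0$) the same as forcing this bridge to stay positive on $(0, 1)$, which is exactly the process whose square is described in the corollary.

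In this setting, Theorem \ref{mainthm} expresses the conditional law of $\Gamma^2$ as the sum of the occupation time fields of four independent pieces: (A) a critical loop-soup in $(0, 1)$; (B) a PPP of excursions away from $0$ in $[0, 1)$ with intensity $a^2 \nu_{0, 0, [0, 1)}$; (C) the symmetric PPP of excursions away from $1$ in $(0, 1]$ with intensity $b^2 \nu_{1, 1, (0, 1]}$; and (D) a PPP of excursions from $0$ to $1$ with intensity $ab\,\nu_{0, 1, [0, 1]}$, conditioned to have an odd number $\Delta \in \{1, 3, 5, \ldots\}$ of excursions. Items (B) and (C) match directly items (1) and (2) of the corollary, via the standard Ray--Knight-type description of the spatial occupation profile of a Poisson process of Brownian excursions at a point: this profile starts at the total local time mass ($a^2$ or $b^2$), evolves as a squared Bessel process of dimension $0$, and the restriction to excursions that do not reach the opposite endpoint translates directly into the conditioning on hitting $0$ before that endpoint, as stated in the corollary.

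To produce item (3), I would combine (A) and (D). The loop-soup in $(0, 1)$ contributes (by Le~Jan's isomorphism, equivalently because the zero-boundary GFF on $[0, 1]$ is a Brownian bridge from $0$ to $0$) a squared one-dimensional Bessel bridge from $0$ to $0$, while each individual excursion from $0$ to $1$ contributes, via a classical Ray--Knight identity for the local time profile of a Brownian excursion between two prescribed points, a squared two-dimensional Bessel bridge from $0$ to $0$. By the additivity property of squared Bessel bridges with common endpoints---dimensions add under independent sums---combining (A) with the $\Delta$ excursions from (D) yields in law a squared $(1 + 2\Delta)$-dimensional Bessel bridge from $0$ to $0$, which is exactly item (3). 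The Poisson parameter $abC$ equals $ab$ since the total mass of $\nu_{0, 1, [0, 1]}$ is $1$, as follows from the classical computation of the It\^o excursion measure of a Brownian excursion reaching a prescribed height.

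The main obstacle is the Ray--Knight identification used in the previous paragraph: specifically, that the spatial local-time profile on $[0, 1]$ of a single path sampled from $\nu_{0, 1, [0, 1]}/|\nu_{0, 1, [0, 1]}|$ is exactly a squared two-dimensional Bessel bridge from $0$ to $0$. This follows from the first Ray--Knight theorem combined with Doob's $h$-transform realization of the Brownian motion conditioned to hit $1$ before returning to $0$, but checking normalizations so that the Poisson parameter comes out exactly as $ab$ (and not some other multiple) requires some care. Once this identification and the additivity property for squared Bessel bridges are in hand, the remainder of the argument is a direct rearrangement of the output of Theorem \ref{mainthm}.
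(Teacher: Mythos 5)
Your proposal is correct and follows essentially the same route as the paper: Corollary \ref{Cpy} is obtained there by specializing Theorem \ref{mainthm} to the single-edge cable graph (the paper uses $[z,\infty)$ with $x=0$, $y=1$, exactly the ambient-graph device you invoke), and then identifying the excursion point processes away from $0$ and $1$ with the two conditioned squared Bessel processes of dimension $0$, and the loop-soup plus the odd Poisson number of $0$-to-$1$ excursions with the squared $(1+2\Delta)$-dimensional Bessel bridge via the ``each excursion adds two dimensions'' additivity. The only difference is one of emphasis: the paper states this correspondence in a single sentence and then observes that the resulting identity is Formula (1.f) of Pitman--Yor in the case $d=3$, whereas you spell out the Ray--Knight identifications (including the normalization giving $C=1$) that the paper leaves implicit.
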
 The three contributions (1-3)  correspond respectively to the point process of excursions away from $0$ (that do not hit $1$), to the point process of excursions away from $1$ that do not hit $0$, and to the point process of excursions joining $0$ and $1$ that is conditioned to be odd (each excursion adds a two dimensions to the Bessel process). The parity lemma says on the other hand that the law of the square of the unconditioned bridge of reflected Brownian motion is obtained in the same way, but where the same Poisson random variable is not conditioned to be even (and can therefore be equal to $0$) -- giving rise to squared Bessel bridges of randomly chosen dimension in $\{1,5,9,\ldots\}$ instead.

One would think that Jim Pitman and Marc Yor would have come up instantaneously with proofs along the lines that we give in this section via explicit Laplace transform computations. As it turns out, in what probably is one of their famous joint summer papers \cite {PY}, they point out exactly such a decomposition of Bessel Bridges (of general dimension), by recognizing the different terms in the Laplace transform! In particular, Formula (1.f) in the case $d=1$ gives the parity lemma for the cable graph $[0,1]$. When one conditions a Brownian bridge not to hit the origin, one gets exactly a bridge of a three-dimensional Bessel process, so that Formula (1.f) in the case $d=3$ for the decomposition of such a Bessel bridge gives exactly the above corollary; for both cases, one just has to remember the formula for the Gamma function evaluated at half-integer points to read in the Formula (1.f) of \cite {PY} that the number of excursions joining $0$ and $1$ is a Poisson random variable respectively conditioned to be even or odd.
It is interesting and nice to see how Marc Yor's favourite techniques can be also reinterpreted via loop-soup decompositions. This is the type of ``quest for mysteriously hidden pathwise explanations'' of identities in law between functionals of stochastic processes that he was fond of.

As we will now see in Section \ref {Snew}, one avenue to proving the general switching property cwill be to use this case of the graph with one edge and ideas from \cite {LW3} to use ``equivalence between effective resitance'' (this is also related to the arguments in \cite {Aid1}); all these different routes are of course interrelated (since the Pitman-Yor computations bear similarities with the ones presented in the previous section).
\end {itemize}

\section {Switching along given circuits or paths}
\label {NewSection}
\label {Snew}

Let us now describe a third line of ideas, that allow to derive further statements such as Proposition \ref {Pnew} and its consequences.
One can view this either as a reworking/reformulation of parts of the previous proofs, or as a  third way to think about the switching property and derive it.  This approach,  which is arguably the one that gets actually closer to the actual ``switching'' name,  has also been independently worked out by Lorca Heeney before the posting of this version of this preprint. Here, the starting point is to note that when one first conditions $\Lambda$ on its values at all the sites, then one can relate the GFF to an Ising model and its FK-Ising model as explained in \cite {LW1} -- random even subgraphs also show up there --, which allows to use the tricks of the random current representation of the Ising model part.

In this section, we will use (and not reprove) the switching identity on single edges (i.e., Corollary \ref {Cpy}), which (as we have already mentioned) can be derived directly in several different ways (reminiscent of the two ways in which we derived the general switching identity so far, via Laplace transform/Dynkin isomorphism identities as in \cite {PY} or via loop-soups as in \cite {LW1,Aid1}). With this in hand, one will be able to build on features of the discrete world with its random current identities. As however noted in Remark \ref {edgetograph}, the way to go from one edge to the entire graph can be also understood via the effective resistance equivalence ideas from \cite {LW3}. The set of possible proofs of the switching identitites feels itself a little bit like the set of a paths between two points in a graph...

\subsection {Random even subgraphs and switching along loops inside a loop-cluster}

The main starting point is the observation, that comes directly from the rewiring property, that when one conditions on the value of $\Lambda$ at all the sites of the graph, then the traces of the parts of the loop-soup inside each of the edges is made of inputs (excursions away from the endpoints, and loops that stay in the edges) where what happens on different edges is independent of each other, up to the conditioning by a parity condition (at each site, the number of excursions that join this site to a neighbouring site has to be even) that affects only the excursions that cross edges.

Let us be more precise: One can chop the loop-soup ${\mathcal L}$ into its pieces/excursions ${\mathcal L}_e$ on each of the edges $e$. More precisely, for each $e$ that joins two sites $u$ and $v$ (we will keep this notation for the extremities of $e$ in the next few paragraphs),  ${\mathcal L}_e$ will consist of the collection $B(e)$ of excursions from $u$ to $u$ or from $v$ to $v$ in $e$ (i.e., the union of all excursions in $e$ away from the boundary of the edge that do not cross it), the collection $C(e)$ of crossing excursions that join $u$ and $v$ in $e$ (there will be only finitely many of those, we call their number $N(e)$), and there will also be a collection $L(e)$ of loops that are entirely contained in $e$. Note that (simply because this is created by a collection of loops), for each site $u$, the sum over all edges adjacent to $u$ of $N(e)$ (we count it twice if the edge is from $e$ to $e$) is necessarily even.

Let us first describe some considerations that where already present in \cite {LW1}:
We can start with the multiple-point version of Lemma \ref {parity1} when one chooses these points $x_1, \ldots, x_n, \ldots$ to be all the sites of the graph, i.e., the description of the conditional distribution of $({\mathcal L}_e)_{e \in E} := (B(e), C(e), L(e))_{e \in E}$ given the values of $\Lambda$ at all the sites of the graph. Unsuprisingly, $B$ and $L$ will be independent Poisson point processes of excursions and loops, while the collection of crossings will be a Poisson collection of crossings with a parity condition at each site.
More precisely, let us consider a collection of independent triples $(\tilde {\mathcal L}_e)_{e \in E} = (\tilde B(e), \tilde C(e), \tilde L(e))_{e \in E}$ where $\tilde B(e)$ is a Poisson point process with intensity
$a_u^2 \nu_{u,u,e} + a_v^2 \nu_{v,v, e}$, $\tilde C(e)$ is a Poisson point process of intensity $a_u a_v \nu_{u,v;e}$ and $\tilde L(e)$ is a loop-soup in $e$. Then:
\begin {lemma}[Proposition 7, \cite {W1}]
The conditional law of the collection $({\mathcal L}_e)_{e \in E}$ given $(\Lambda(x))_{x \in S} = (a_x^2)_{x \in S}$ is that of
$(\tilde {\mathcal L}_e)_{e \in E}$ conditioned on the parity event ${\mathcal P}$ that for all $x \in S$, the sum of $\tilde N(e)$ over all edges $e$ adjacent to $x$ is even.
\end {lemma}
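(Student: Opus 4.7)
The plan is to view this as the many-site generalisation of Lemma~\ref{parity1}, where the marked points are the whole set $S$, and then to reorganise the resulting excursion processes by edge. Once set up this way, the statement essentially writes itself: the cable-graph loop-soup naturally splits into the loops that avoid $S$ (these will supply $\tilde L(e)$ on each edge) and the loops that touch $S$, and conditioning on the local times $(a_x^2)_{x \in S}$ reduces the latter, by the rewiring argument already used for Lemma~\ref{parity1}, to two independent Poisson families of excursions subject only to a site-wise parity constraint.

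First I would apply Lemma~\ref{parity1} with $\{x_1,\ldots,x_n\} = S$ (and, when $S$ is countably infinite, pass to a limit along increasing finite exhaustions, using that both the conditioning and the parity constraint are local at each site). This gives, conditional on $(\Lambda(x))_{x \in S} = (a_x^2)_{x \in S}$, the joint law of the trace of the loops of ${\mathcal L}$ on $S$ as the superposition of (a) an independent family of Poisson point processes of self-excursions at each site $u$, with intensity $a_u^2 \nu_{u,u,G\setminus S}$, and (b) a family of Poisson point processes of between-site excursions, with intensity $a_u a_v \nu_{u,v,G\setminus S}$ for each unordered pair $\{u,v\}$, the whole between-site collection being conditioned on the event that the number of between-site excursions incident to each site is even. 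This follows exactly as in the text after Lemma~\ref{parity1}, namely by taking the $K\to\infty$ limit of the explicit discrete loop-soup formula behind \eqref{edgeprobab}, but applied simultaneously to every site and every edge of the graph.

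Next I would edge-localise these excursion processes. Any path contained in $G \setminus S$ lies inside a single open edge, so $\nu_{u,u,G\setminus S}$ decomposes as $\sum_{e \ni u} \nu_{u,u,e}$ and $\nu_{u,v,G\setminus S}$ is supported on paths in the unique edge $e=(u,v)$ joining $u$ and $v$ (if any). The restriction and superposition properties of Poisson point processes then turn (a) into an independent family of Poisson processes $\tilde B(e)$ of intensity $a_u^2 \nu_{u,u,e} + a_v^2 \nu_{v,v,e}$, one per edge, and turn (b) into an independent family (before conditioning) of Poisson processes $\tilde C(e)$ of intensity $a_u a_v \nu_{u,v,e}$. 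The parity constraint transcribes exactly into the event ${\mathcal P}$ of the statement. Finally, the loops of ${\mathcal L}$ that never visit $S$ are entirely contained in single open edges, and a standard thinning argument identifies them, edge by edge, as independent critical loop-soups $\tilde L(e)$ in $e$ (with $u$ and $v$ as absorbing boundary), independent of everything above because they interact neither with $S$ nor with the conditioning.

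The main obstacle is the combinatorial bookkeeping in the first step: the discrete formula \eqref{edgeprobab} already factorises across edges conditionally on the per-site visit numbers, but the passage from visit numbers to the local times $a_x^2$ involves the site-by-site combinatorial coefficients that encode how visits at each site are glued into loops, and one must verify that those coefficients generate exactly the global parity conditioning ${\mathcal P}$ and no other coupling between edges. Once this is done carefully (essentially the content of Proposition~7 of \cite{W1}), the remainder is just Poisson-process restriction and independence.
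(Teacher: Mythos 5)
Your proposal is correct and follows essentially the same route as the paper, which derives this lemma as the multi-point version of Lemma \ref{parity1} (with all sites of $S$ as marked points), proved by the same $K\to\infty$ limit of the discrete crossing-number formula behind \eqref{edgeprobab}, and then reads off the per-edge description from the fact that excursions and loops in $G\setminus S$ are confined to single edges. The edge-localisation via restriction and superposition of Poisson point processes that you spell out is exactly what the paper leaves implicit when it writes down the intensities of $\tilde B(e)$, $\tilde C(e)$ and $\tilde L(e)$.
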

As we have already pointed out after stating Lemma \ref {parity1}, the proof of Proposition 7 in \cite {W1} was left as a ``simple exercise''. The more detailed proof goes along exactly the same lines as that of Lemma \ref {parity1} in Section \ref {S3}, based on the formula for the law of the unconditioned number of crossings (Proposition 2.46 in \cite {WP}).
This result was then used in \cite {LW1} to relate this model to the Ising model and its random current representation, and to random even subgraphs.
For each edge $e$, we define $E_{\tilde \Lambda}$ to be the set of edges for which  $\tilde \Lambda_e > 0$ (where $\tilde \Lambda_e = (\tilde \Lambda_e (x), x \in e)$ is the sum of the local times of the three contributions parts of  $\tilde {\mathcal L}_e$).

One can now note that for each edge,
$$P [ \tilde N(e) \hbox{ is even} \ | \ e \in E_{\tilde \Lambda} ] = P [ \tilde N(e) \hbox { is odd}\ | \ e \in E_{\tilde \Lambda} ] = 1/2.$$
(this can be derived directly from the explicit value of the mean of the Poisson random variable $\tilde N(e)$ and the value of the probability that $\tilde \Lambda > 0$ on $e$ -- as we did in proof in Section \ref {S3}, see also \cite {LW1}; since we will use the switching property on each edge anyway, we can also view this also as a consequence of the switching along edges).
This immediately shows that the conditional  probability of the parity event ${\mathcal P}$ for ${\tilde L}$ given $E_{\tilde \Lambda}$ is just the probability that a uniformly chosen random subgraph of $(S, E_{\tilde \Lambda})$ is an even subgraph, which is a function of $E_{\tilde \Lambda}$ only (i.e., it does not depend on the chosen values for $\tilde \Lambda$ at the sites). Summing up (again, this is so far all very close to ideas present in \cite {LW1}):
\begin {lemma}
\label {lemmacond}
The conditional law of ${\mathcal L}$ given $E_{\Lambda}$ and $(\Lambda(x))_{x \in S} = (a_x^2)_{x \in S}$ can be described as follows: For each connected component of $(S,E_\Lambda)$ choose independently a uniform random even subgraph -- this gives a subset $E(P)$ of $E_\Lambda$. Then, independently for each edge $e$, choose ${\mathcal L}_e$ according to (a) the law of $\tilde {\mathcal L}_e$ conditioned on $\tilde \Lambda$ hitting $0$ if $e \notin E_{\Lambda}$, (b) the law of $\tilde {\mathcal L}_e$ conditioned on having an odd number of crossings if  $e \in E(P)$, and (c) the law of $\tilde {\mathcal L}_e$ conditioned on having an even number of crossings and $\tilde \Lambda_e >0$ in the remaining case where $e \in E_{\Lambda} \setminus E(P)$.
\end {lemma}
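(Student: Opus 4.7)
The approach is to combine three items that are already established in the excerpt: the previous lemma (Proposition 7 of [W1]) describing the conditional law of $(\mathcal L_e)_{e\in E}$ given the values of $\Lambda$ at the sites, the edgewise independence of the reference family $(\tilde{\mathcal L}_e)_{e\in E}$, and the ``fair coin'' identity $P[\tilde N(e)\text{ even}\mid e\in E_{\tilde\Lambda}] = 1/2$ displayed just above. The plan is to start from the product law of $(\tilde{\mathcal L}_e)$ and condition successively on $(\Lambda(x))_{x\in S}$, on $E_\Lambda$, and finally on the parity event $\mathcal P$, verifying at each step that independence across edges is preserved and that the parity constraint decouples cleanly from the rest of the edge data.

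First I would invoke Proposition 7 to replace the conditional law of $(\mathcal L_e)$ given $(\Lambda(x))_{x\in S}$ by that of $(\tilde{\mathcal L}_e)$ under the product measure conditioned on $\mathcal P$. Then I would further condition on $E_\Lambda = E_{\tilde\Lambda}$; because $E_{\tilde\Lambda}$ is an edgewise functional of $(\tilde{\mathcal L}_e)$, independence across edges survives this step. Moreover, for $e\notin E_{\tilde\Lambda}$ one automatically has $\tilde N(e)=0$, so the event $\mathcal P$ depends on $(\tilde{\mathcal L}_e)$ only through the parity vector $\pi = (\tilde N(e)\bmod 2)_{e\in E_{\tilde\Lambda}}$.

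Next I would separate this parity information from the rest of each $\tilde{\mathcal L}_e$. Given $E_{\tilde\Lambda}$, the bits $\pi(e)$ are independent fair Bernoullis by the displayed identity, so the random subgraph $E(P) = \{e:\pi(e)=1\}$ is uniform among all subgraphs of $(S, E_{\tilde\Lambda})$. The event $\mathcal P$ is precisely the event that $E(P)$ is an even subgraph of $(S, E_{\tilde\Lambda})$; conditioning a uniform random subgraph on being even produces the uniform distribution on even subgraphs, which, because evenness is a local constraint at each vertex, automatically factorises into independent uniform even subgraphs over the connected components of $(S, E_{\tilde\Lambda})$. This is the first step in the statement. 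Finally, given $(E_\Lambda, E(P))$ — equivalently, knowing the class (a), (b) or (c) of each edge — the $\tilde{\mathcal L}_e$ remain conditionally independent across edges, each with its original law further conditioned on the pair ``is $\tilde\Lambda_e>0$?'' and ``is $\tilde N(e)$ odd?'', which is exactly the law described in items (a)--(c). The only point that genuinely requires care — and really the only potential obstacle — is verifying that after conditioning on $\mathcal P$ the conditional law of each $\tilde{\mathcal L}_e$ given $(E_\Lambda, E(P))$ depends only on that edge's own class and not on distant parities; this reduces to writing $\mathcal P$ as a measurable event in $\pi$ alone and using the Bernoulli $1/2$ property edge by edge to check that each $\pi(e)$ is jointly independent of the remaining, ``non-parity'' information carried by $\tilde{\mathcal L}_e$.
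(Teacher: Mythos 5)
Your argument is correct and follows essentially the same route as the paper's (which states this step very compactly): Proposition 7 of \cite{W1}, edgewise independence of the reference family, the fair-coin identity for the crossing parity given $e\in E_{\tilde\Lambda}$, and the observation that conditioning the resulting i.i.d.\ Bernoulli$(1/2)$ parity vector on $\mathcal P$ yields a uniform even subgraph that factorises over connected components. The final ``obstacle'' you flag in fact dissolves immediately: once $E(P)=H$ is fixed, the conditioning event is $\bigcap_e\{\pi(e)=1_{e\in H}\}$, a product event over edges, so no independence of $\pi(e)$ from the remaining within-edge information of $\tilde{\mathcal L}_e$ is needed for items (a)--(c) -- that finer independence only becomes relevant in the next step (Proposition \ref{Pnew}), where it is supplied by the single-edge switching identity.
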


Now, this is where we can use the full version of the switching identity on each edge. Indeed, the choice of the random even subgraph of $E_{\Lambda}$ does not affect the conditional law of $\Lambda$ itself (since options (b) and (c) will provide the same conditional distribution for $\Lambda_e$). In other words, conditionally in $E_{\Lambda}$, the random even subgraph of $E_{\Lambda}$ that describes the parity of the number of jumps and the field $\Lambda$ are independent.
This is exactly Proposition \ref {Pnew}.

\medbreak
A more concrete reformulation of this result is the following:
\begin {proposition}
\label {Palternative}
Consider any finite self-avoiding cycle $c$ of edges in $G$. Then, there exists a measure-preserving involution ${\mathcal L} \leftrightarrow {\mathcal L}^*$ on the set of loop-soup realizations for which $c \subset E_{\Lambda}$ that keeps $\Lambda$ (and therefore $E_{\Lambda}$) unchanged and changes the parity of the number of jumps on the edges of $c$ (and on these edges only).
\end {proposition}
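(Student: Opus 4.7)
The plan is to deduce this from the joint content of Proposition \ref{Pnew}/Lemma \ref{lemmacond} (which describes the conditional law of $\mathcal L$ given $\Lambda$ in two independent stages: a uniform random even subgraph of $E_\Lambda$ for the parities, and independent edge-pieces whose conditional distribution depends only on $\Lambda_e$ and the parity $P(e)$) together with the single-edge switching identity of Section \ref{S63} (Corollary \ref{Cpy}). The single-edge switching tells us exactly what is needed at the level of one edge: for each edge $e$ on which $\Lambda_e$ is everywhere positive, the two conditional laws of the edge-piece $\mathcal L_e$ given $\Lambda_e$, one restricted to ``$N(e)$ even'' and the other to ``$N(e)$ odd'', give the same marginal law for $\Lambda_e$. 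Since each of those conditional laws lives on a standard Borel space, this marginal equality promotes to the existence of a measure-preserving bijection $\phi_e$ between them, which can be symmetrized into a measure-preserving involution $T_e$ (set $T_e := \phi_e$ on the even side and $T_e := \phi_e^{-1}$ on the odd side).

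The global involution is then assembled in three steps, performed conditionally on $\Lambda$. First, note that the cycle $c$ is itself an even subgraph of $(S,E_\Lambda)$: since $c \subset E_\Lambda$ by assumption and $c$ is a self-avoiding cycle, each vertex of $S$ has degree $0$ or $2$ in $c$. Hence the symmetric-difference map $P \mapsto P \oplus c$ is an involution on the set of even subgraphs of $E_\Lambda$ and preserves the uniform measure on that set, which by Proposition \ref{Pnew} is precisely the conditional law of the parity vector given $\Lambda$. Second, on each edge $e \in c$ I apply the involution $T_e$ to the edge-piece $\mathcal L_e$; on edges $e \notin c$ I do nothing. This flips the parities exactly on the edges of $c$, preserves $\Lambda$ pointwise, and preserves the conditional measure on edge-pieces by construction. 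Third, the pieces are reassembled into loops using the gluing datum at each vertex (a pairing of boundary excursions and crossings meeting at that vertex); the fact that XOR with $c$ does not change the number of crossings modulo $2$ at each vertex, together with the rewiring property invoked in the proof of Lemma \ref{lemmacond}, guarantees that the vertex-gluing data can be transported canonically and that its conditional law is unchanged.

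Composed across all edges, these three steps give a map $\mathcal L \mapsto \mathcal L^*$ which is measure-preserving (since each layer is), an involution (since each $T_e$ is and since $(P \oplus c) \oplus c = P$), leaves $\Lambda$ invariant, and changes the parity $P(e)$ precisely for $e \in c$. This is exactly the statement of Proposition \ref{Palternative}.

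The main obstacle I would anticipate is the bookkeeping of the vertex-gluing data in the last step: one has to check that the rewiring involved in reconstructing loops from edge-pieces does not secretly depend on the parity pattern in a way that breaks the involutory nature of the map. This is the one place where the rewiring property of \cite{W1} enters in a non-trivial way, rather than just the per-edge switching of Corollary \ref{Cpy}. Once one has checked that, conditionally on $\Lambda$, the vertex-gluing data is uniform among admissible pairings and independent of the choice of $T_e$, the construction goes through cleanly and the proposition is immediate.
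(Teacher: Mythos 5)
Your argument is essentially the paper's own: Proposition \ref{Palternative} is presented there as a direct reformulation of Proposition \ref{Pnew} and Lemma \ref{lemmacond}, obtained from exactly the ingredients you use — the single-edge switching identity giving a measure-preserving, $\Lambda_e$-preserving even/odd involution on each edge-piece, combined with the fact that XOR with the cycle $c$ preserves the uniform measure on even subgraphs of $E_\Lambda$. One small repair to your third step: the vertex-gluing data cannot be ``transported canonically'' fibrewise over the edge-piece collection, since changing the crossing numbers on the two $c$-edges at a vertex changes the number of admissible pairings there; but the existence claim is unaffected, because the conditional law of the full configuration given $\Lambda$ and the parity pattern is atomless with the same total mass for $P$ and $P\oplus c$, so the abstract standard-Borel isomorphism you already invoke for the $\phi_e$'s applies one level up (the finite pairing randomness is absorbed into the continuum of the pieces).
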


Since this switching does not change $\Lambda$, it is easy to see that it is possible to first choose $c$ to be a deterministic function of $E_\Lambda$, and then do the switching. One can also do the switching successively along $n$ cycles that are functions of $E_\Lambda$.

It is probably worth recalling some very basic topological features about even subgraphs of a given graph and how they relate to elementary topological features: If one has a finite connected graph, then one can choose a maximal subtree of this graph, and then consider the number $h$ of edges that are missing (i.e., the edges $E$ in the graph that are not in the tree $T$). It is easy to see that on the one hand, this number is the number of generators of the fundamental group of the graph, and on the other hand that every function from $E \setminus T \to \{ 0, 1\}$ can be extended in a unique way into a function of $E \to \{ 0, 1 \}$, so that $\{ e, \ \xi (e)  = 1 \}$ forms an even subgraph of $E$ (always with the same set of sites).
In particular, we see that the cardinality of the set of even sugraphs of $E$ is $2^h$ (of course, the number $h$ can also be described in many other ways, but we stick to this down-to-earth approach here). Furthermore, if $c_j$  is the (unique) simple cycle in the graph that is obtained by adding the $j$-th missing edge to $T$, then one has a one-to-one correspondance between even subgraphs and $\{ 0, 1 \}^h$ by choosing edge to be in the subgraph if it belongs to an odd number of cycles for which $\eps_j =1$ (when $(\eps_j) \in \{ 0,1 \}^h$). In other words,  one starts with the empty graph and then successively switches the parity along each $c_j$ for which $\eps_j =1$.
A uniform even subgraph can therefore be obtained by tossing $h$ independent coins to choose $(\eps_j)$.

\subsection {Switching with more insertions}

The same ideas can be used to obtain yet another simple proof for Theorem \ref {mainthm}. In fact, one can obtain in the same way the following generalization of Theorem \ref {mainthm}:
\begin {proposition}
For any $2n$ points $x_1, \ldots, x_{2n}$ and any positive numbers $a_1, \ldots, a_{2n}$, the conditional law of $\Lambda$ given $\Lambda(x_1)=a_1^2, \ldots , \Lambda(x_{2n}) = a_{2n}^2$ and the event  $A(x_1, \ldots, x_{2n})$ that for each cluster of loops $C$, $\# ( C \cap \{ x_1, \ldots, x_{2n} \}) $ is even,
can equivalently be obtained in the following manner: Consider the total occupation time of the overlay of the following three inputs: (1) A Brownian loop-soup in $G \setminus \{ x_1, \ldots, x_{2n} \}$, (2) Independent Poisson point processes of Brownian excursions with intensity $a_i^2 \nu_{x_i, x_i; G \setminus \{ x_1, \ldots, x_{2n}\}}$ (for each $i$), (3) Independent Poisson point processes of Brownian excursions with intensity $a_i a_j \nu_{x_i, x_j : G \setminus \{ x_1, \ldots, x_{2n} \}}$ for $i < j$, but that are then conditioned on having a total number of excursions away from $x_i$ to be odd, for each $i$ (so if ${\mathcal N}_{i,j}$ is the number of excursions joining $x_i$ and $x_j$, then $\sum_{j  \in \{ 1, \ldots 2n \} \setminus \{i \}  } {\mathcal N}_{i,j}$ has to be odd for each $i$).
\end {proposition}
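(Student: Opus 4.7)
The plan is to generalize the Dynkin-isomorphism computation of Section~\ref{SD} from two to $2n$ marked points. Note that the case $n=1$ recovers Theorem~\ref{mainthm}: on $\{\Lambda(x_1),\Lambda(x_2)>0\}$ we have $A(x_1,x_2)=\{x_1\leftrightarrow x_2\}$, and the ``odd at each $x_i$'' condition reduces to ``$\mathcal{N}_{1,2}$ odd''.

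I would start by setting up the Dynkin decomposition. Let $\Phi_i$ be the harmonic function in $G\setminus\{x_1,\ldots,x_{2n}\}$ with $\Phi_i(x_j)=\delta_{ij}$. For any $\sigma\in\{\pm 1\}^{2n}$, the field $\Gamma_0+\sum_i \sigma_i a_i \Phi_i$ (with $\Gamma_0$ the zero-boundary GFF on $G\setminus\{x_1,\ldots,x_{2n}\}$) realizes $\Gamma$ conditioned on $\Gamma(x_i)=\sigma_i a_i$. Squaring and computing the Laplace transform exactly as in Section~\ref{SD}, the cross-terms $\sigma_i\sigma_j a_i a_j \Phi_i\Phi_j$ in the expansion of $\bigl(\sum_i \sigma_i a_i \Phi_i\bigr)^2$ factorize the result as $(1)\cdot\prod_i(2)_i\cdot\prod_{i<j}(3)_{ij}^{\sigma_i\sigma_j}$, where $(1)$, $(2)_i$ and $(3)_{ij}$ are the multi-index analogues of the two-point factors (loop soup in the punctured graph, self-excursions at $x_i$, and pair excursions between $x_i$ and $x_j$). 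Writing $(3)_{ij}=\exp\theta_{ij}$ and expanding $(3)_{ij}^{\sigma_i\sigma_j}=\cosh\theta_{ij}+\sigma_i\sigma_j\sinh\theta_{ij}$ yields
\[
\prod_{i<j}(3)_{ij}^{\sigma_i\sigma_j}=\sum_{S\subset\binom{[2n]}{2}}\Big(\prod_{(i,j)\in S}\sinh\theta_{ij}\Big)\Big(\prod_{(i,j)\notin S}\cosh\theta_{ij}\Big)\prod_i \sigma_i^{d_S(i)},
\]
where $d_S(i)$ denotes the degree of $i$ in $S$ (viewed as a graph on $\{1,\ldots,2n\}$).

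Next I would condition on $(\Lambda(x_i))_i$ and on the event $A$ by averaging over $\sigma$. The Gaussian density of $(\Gamma(x_i))_i$ at $(\sigma_i a_i)_i$ provides weights $\rho(\sigma)$, and the sign-resampling Markov property (given $\Lambda$, each loop cluster carries an independent fair coin) restricts to sign vectors that are constant on each loop cluster; on $A$ this leaves exactly $2^m$ compatible patterns, where $m$ is the number of loop clusters meeting $\{x_1,\ldots,x_{2n}\}$. Summing the display above against these weights, the factor $\prod_i \sigma_i^{d_S(i)}$ selects exactly those subsets $S$ in which every marked point has odd degree. The result then matches, up to Poisson normalization, the Laplace transform of the alternative description of the proposition, which is $\sum_{S:\, d_S(i)\text{ odd }\forall i}\prod_{(i,j)\in S}\sinh\theta_{ij}\prod_{(i,j)\notin S}\cosh\theta_{ij}$ (again up to the same normalization).

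The main obstacle is the sign-averaging step in the previous paragraph: one must show that the Gaussian weights $\rho(\sigma)$, restricted to the $A$-compatible sign patterns, cancel exactly against $\prod_i \sigma_i^{d_S(i)}$ so as to keep only the all-odd-degree subsets. For $n=1$ this is the elementary $e^m/(e^m+e^{-m})$ identity of Section~\ref{SD}; for general $n$ a cleaner route is through the random-even-subgraph framework of Section~\ref{Snew}. One would first establish the multi-point parity lemma (conditioning on $(\Lambda(x_i))_i$ alone forces $\sum_{j\neq i}\mathcal{N}_{i,j}$ to be even at each $i$) by the discretization of Lemma~\ref{parity2} applied to the Markov chain on $\{x_1,\ldots,x_{2n},\partial\}$; then invoke the single-edge switching Corollary~\ref{Cpy} together with the cycle-switching of Proposition~\ref{Palternative}, applied to cycles in the complete multigraph of pair excursions on $\{x_1,\ldots,x_{2n}\}$, to move between any two parity vectors $(p_i)$ of the same global parity. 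Since on $A$ the ``all-odd'' parity vector is reachable from the ``all-even'' vector produced by the parity lemma by such a sequence of cycle switches, and each switch is measure-preserving on $\Lambda$, the claimed identity in law follows.
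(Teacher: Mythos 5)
There is a genuine gap, and it sits exactly where you located ``the main obstacle''. In your second (preferred) route you propose to pass from the all-even configuration produced by the multi-point parity lemma to the all-odd configuration of the statement by switching along \emph{cycles} in the complete multigraph on $\{x_1,\ldots,x_{2n}\}$ whose edges are the pair-excursion counts ${\mathcal N}_{i,j}$. This cannot work: a simple cycle meets each of its vertices in exactly two edges, so flipping the parities of ${\mathcal N}_{i,j}$ along a cycle changes every degree $\sum_{j\neq i}{\mathcal N}_{i,j}$ by an even amount and hence preserves \emph{every} individual degree parity $p_i$. The all-even and all-odd degree vectors therefore lie in different orbits of the cycle-switching action (odd subgraphs form a coset of the cycle space, not an element of it), and no sequence of such switches connects them. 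What is needed is a switch along $n$ disjoint \emph{paths} pairing up the $2n$ marked points inside $E_\Lambda$ (such a pairing exists within each cluster precisely because $A$ holds); this is why the paper's own proof first adds $2n$ ghost edges joining the $x_i$'s to one extra vertex and then applies Proposition \ref{Palternative} in the enlarged graph along $n$ cycles, each of which uses two ghost edges and a path in $E_\Lambda$ — the detour through the ghost vertex is precisely what turns a degree-parity-preserving cycle switch into a degree-parity-flipping path switch at the marked points.

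Your first route (the $2n$-point Laplace transform computation in the spirit of Section \ref{SD}) is sound in outline, but the selection mechanism is misstated and the step you flag is left open rather than closed. Summing $\prod_i\sigma_i^{d_S(i)}$ over $\sigma\in\{\pm1\}^{2n}$ uniformly kills all $S$ except those with \emph{all degrees even} — the opposite of what you want. The missing ingredient is the combinatorial identity $1_A=\sum_{\sigma}\bigl(\prod_i\sigma_i\bigr)\,P[\sigma\mid\Lambda]$, which holds because $P[\sigma\mid\Lambda]=2^{-m}\,1_{\{\sigma\ \mathrm{constant\ on\ clusters}\}}$ and $\sum_{\pm}(\pm1)^{|B|}$ vanishes unless the block $B$ has even cardinality; the extra weight $\prod_i\sigma_i$ it supplies is exactly what converts the $\sigma$-sum into a selector of the all-odd-degree subgraphs $S$, after the Gaussian weights $\rho(\sigma)\propto\exp(\sum_{i<j}\sigma_i\sigma_j a_ia_j|\nu_{x_i,x_j}|)$ have been absorbed into the factors $(3)_{ij}^{\sigma_i\sigma_j}$ to replace the massless excursion masses by the massive ones. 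With that identity in hand your first route does give a complete proof (and one that is genuinely different from the paper's ghost-edge argument); as written, however, neither route closes the argument, and the route you actually rely on fails for a structural parity reason.
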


So, the situation (and this is of course no surprise in the light of \cite {LW1}) is a bit similar to what the moment identities for the Ising model tell about the connectivities in the underlying FK model. When $2n=2$, this describes exactly the law conditioned on $x_1 \leftrightarrow x_2$, but when $2n \ge 4$, it only describes the conditional law given part of the connectivity features between the $2n$ points.

\medbreak

One simple way to derive these statements with ``sinks'' from the Proposition \ref {Pnew} or Proposition~\ref {Palternative} (where no sinks where presents) is to use the classical trick of ``adding extra edges'' to the graph and to use the spatial Markov property of the model. Let us briefly outline how this works in the case of two points (i.e., how to deduce Theorem \ref {mainthm} from Proposition \ref {Pnew}): We simply add an edge $e$ joining the two points $x$ and $y$ (the edge can be of any given length) to the cable graph $G$, which now gives a new cable graph $G'$.
We can now apply Proposition \ref {Pnew} in $G'$. On the set of loop-soup configurations where $x$ is connected to $y$ within $e$ as well as in $G$ (which corresponds to the existence of a self-avoiding cycle of edges in $E_{\Lambda'}$ that goes through the edge $e$ for $G'$), we therefore have a measure-preserving  bijection that also preserves $\Lambda'$ between the configurations where the number of crossings of $e$ is even (which when restricted to $G$ corresponds exactly to the loop-soup configurations in $G$ conditioned on $x \leftrightarrow y$) and the configurations
where the number of jumps along $e$ is odd (which when restricted to $G$ corresponds to the configurations with the odd number of excursions between $x$ and $y$ as in the proposition).

For the general case of $2n$ points, one can similarly add  $2n$ edges connecting each of these $2n$ sites to one additional point, and then (for the configurations in the large cable graph $G'$) switch from the event where $A(x_1, \ldots, x_{2n})$ holds and all these additional edges have an even number of crossing (that corresponds to the usual loop-soup in $G$) to the event where $A(x_1, \ldots, x_{2n})$ holds and all these additional edges have an odd number of crossings (that corresponds to the description with the odd condition in the Proposition) by switching along $n$ loops in $G'$ (that are well-chosen depending on the connections between the $2n$ points in $G$) that go through all these $2n$ additional edges (i.e., one loop for each edge).

\subsection {Windings of clusters, windings of loops, Lupu's intensity doubling conjecture}

We now turn to considerations related to Corollary \ref {Cnew}. Recall the definition of the loop-soup parity around $\Delta$ of a loop-soup cluster from the introduction.
If we condition on $\Lambda$, then for each cluster that contains a cycle $\gamma$ with odd index around $\Delta$, we can use the switching that swaps the
parity of the number of crossings of each of the edges of $\gamma$. Elementary considerations then imply that this switching changes the parity of $P_\Delta(C, {\mathcal L})$, so that the corollary immediately follows.

\begin {remark}
Corollary \ref {Cnew} can be generalized in several ways:
\begin {itemize}
 \item If a cluster only contains cycles of index in $n\Z$ for some $n \ge 2$ around $\Delta$ (and does indeed contain such cycles), then one can define the ``mod-$n$ parity'' $P_{n, \Delta}$ of the loop-cluster to be $0$ or $1$ depending on whether the total index of the loop-soup is in $2n \Z$ or not. Then, the very same arguments (switching along one cycle of index $n$ in the cluster) shows that $P_{n, \Delta}$ is equal to $0$ or $1$ with probability $1/2$.
 \item We can use windings around other  $(d-2)$-dimensional hypersurfaces than the $(d-2)$-dimensional hyperplanes. For instance, in $\Z^3$, one can look at the winding number around any closed loop that does not intersect the cable-graph of $\Z^3$ -- the analog in any dimension works. The same results will clearly hold (just using the switching along any cycle that wraps around the loop or the hypersurface).
 \item For each cluster $C$, one can also look at parities around different $\Delta$'s at the same time (up to $h=h(C)$ of them). Indeed, one can choose $h$ ''independent'' cycles in $C$, and because of Proposition \ref {Pnew}, we see that the joint conditional law of the parities of indexes of the loop-soup around these $\Delta$'s will be that of independent Bernoulli variables.
 \end {itemize}
\end {remark}

When the loop-soup parity (or the mod-$n$ parity) of a cluster (around some $\Delta$) is $1$, then it implies that for at least one loop in the loop-soup, the index is not zero (of course, the converse is not true). In particular, this shows that for any cluster that contains a cycle with non-zero index around $\Delta$, the conditional probability that it does actually contain a Brownian loop with non-zero index is at least $1/2$ (even if one conditions on $\Lambda$), which is not such an intuitive fact.

\medbreak

As we will explain in more detail in \cite {LuW}, this allows to quickly prove the intensity doubling conjecture of Lupu \cite {L3} for the cable-graphs $\Z^d$ in dimensions $d \ge 7$. The basic ideas are the following:
\begin {itemize}
 \item
When $d \ge 7$, it is easy so see that if we look at the loop-soup in $[-N,N]^d$, then for any $a>0$,  the probability that there exist two different Brownian loops of diameter greater than $aN$ that are in the same cluster goes to $0$ as $N \to \infty$ (simply because the expected number of pairs of sites $(x,y)$ in the graph such that $x$ is in a big loops, $y$ is another big loop, and $x$ is connected to $y$ via the union of all the other Brownian loops goes to $0$ as $N \to \infty$ (the two-point function implies immediately gives an upper bound $O(1/N^{d-6})$).
\item
Suppose that we observe a loop-soup cluster that contains a large cycle that wraps around some (large) $N\Delta$, and that the whole cluster remains at distance greater that $a N$ from $N \Delta$. Then, the parity-index (or $n$-parity-index) of the loop-soup in this cluster will be $0$ or $1$ with probability $1/2$. But the previous step then implies that with probability is $1/2 + o(1)$ (when $N \to \infty$) the cluster does in fact contains no large Brownian loop wrapping around $\Delta$ and the index-parity is $0$, and with probability $1/2 + o(1)$ it does contain exactly one Brownian loop that wraps around $\Delta$.
\end {itemize}
So, provided one can show that any cluster that contains a large Brownian loop will actually be of latter type (for some $\Delta$ and $a$) with high probability, we conclude that the collection of clusters in $\Lambda_N$ that do contain large cycles have the following property in the $N \to \infty$ limit: If one selects each of them with probability $1/2$ independently, one ends up with the collection of clusters that do contain the Brownian loops in the loop-soup. This in particular also shows that the law of the discarded ones (that are also selected with probability $1/2$ each) is the same. So, the collection of large cycles that do contain no big Brownian loop will give rise to a second independent loop-soup in the scaling limit! This can be compared (and was one motivation for) the results presented in \cite {CW} about usual critical Bernoulli percolation in high-dimensions.

We also refer to \cite {LuW} for other considerations related to the twisted GFF, and the relation with the results of \cite {L3} and \cite {KL}.

\subsection {Outlook: An explicit bijection via ``peeling''}
 \label {Speeling}
 \label {Se5}

 In the previous sections, we discussed an explicit way to perform the switching on a graph, once one knows how to proceed at the level of the individual edges. This does however not yet provide a concrete recipy explaining how to do the switching from even to odd number of crossings inside each of the edges where one wants to do this.

 We will now outline ideas that lead to such explicit bijections at the actual level of Brownian loops. This section is describing at a heuristic level ideas that will be developed in the forthcoming paper \cite {LuW}. Since there is no real difference in explaining this for general cable graphs compared to the intervals, we do present this in the former case (but the reader can also assume that our cable graph is just a segment). Again, just as in the previous section, a number of the underlying ideas were briefly mentioned in \cite {LW1}.

 One first remark is that the fact that in Dynkin's isomorphism, the conditional law of the remaining squared GFF once one has removed the contributions of the excursions away from $x$ is just that of a squared GFF in $G \setminus \{ x\}$ indicates that the union of all these excursions (away from $x$) is in fact a local set of the GFF that one starts with (in the Schramm-Sheffield sense \cite {SchSh, WP}). It appears quite natural to start from $\Lambda$ and to try to find some ``exploration-type'' algorithm to construct a coupling between the conditioned GFF that one starts with and the set of loops that go through $x$ (so we are here first more in the setup of Dynkin's isomorphism).

 This question is reminiscent of papers by Warren, Yor, Aldous and others \cite {Wa,WY,A} that were studying the conditional distribution of one Brownian motion (or excursion) up to some given (stopping) time $T$ given its local time profile at $T$ -- one idea being to try to progressively draw part of the Brownian motion, that then eats up progressively the allotted local time, and to describe the joint dynamics of the pair (Brownian motion, remaining allotted local time). Here, the question is how to do something similar for collections of Brownian loops on graphs. This goes (initially) very much along the same lines as \cite {AHS}, see also \cite {Aid1}.

 Again, it is useful to make a few remarks in the discrete-time setting. Suppose that one has sampled a loop-soup and knows the number of jumps $n(e)$ along each edge $e$ (and therefore the number of visits $n(y)$ of each site $y$) -- one counts here jumps along unoriented edges, so that there will be $2n(y)$ available edge-jumps at each site. One way to think of it is to draw $n(e)$ different parallel edges instead of each edge $e$. Then, the law of the decomposition into loops given this number of jumps along all edges is obtained by pairing uniformly at random  the $2n(y)$ incoming possible edges  independently for each site $y$ -- this is the rewiring property explained in \cite {W1}.

 Let us now fix the site $x$ and try to explore all the loops that go through $x$.  One starts at $x$ and chooses one of the $2n(x)$ adjacent edges uniformly at random and follows it. One therefore moves along an edge $e$ with a probability $n(e) / (2n (x))$. Then, at the next site $x_1$, the edge that one arrived with will be paired with one of the $2n(x_1)-1$ remaining ones at this site. One then continues like this until one eventually exhausts all the possible edges away from $x$. We see that in this way, one simply follows an excursion away from $x$ by one of the loops. When this walker returns to $x$, one may discover that one it paired with the initial first edge (in which case one has closed one of the random walk loops and then proceeds by choosing another one), or one is paired with another edge away from $x$ and one continues.
It is elementary to see that the number of visits of $x$ by the first discovered loop will be uniform on  $\{1, \ldots, n(x)\}$.

 In the ``fine-mesh limit'' (i.e. for instance when one subdivides each edge into $K$ edges) where the number of visits to a site tends to infinity, one can note that the first loop that one discovers will be a ``macroscopic one''.
 Indeed, in this limit, the local time at $x$ of the first encountered loop will be $X_1 a^2$ (where $X_1$ is a uniform random variable in $[0,1]$), the local time at $x$ of the second loop will be $X_2 (1-X_1) a^2$ and so on -- this stick-breaking description is of course classically related to cycles in random permutations.

 The obtained trajectory is then the ``reversed vertex reinforced jump-process'' as introduced by Sabot and Tarr\`es in \cite {ST} and further studied in \cite {LST}. In the limit where one lets $K \to \infty$, it is explained by Lupu, Sabot and Tarr\`es \cite {LST2} that the process converges to a particular continuous self-interacting process. As opposed to the more strongly ``locally self-repelling'' processes such as the one defined and studied in \cite {TW}, the sample trajectories of this process will look very much like Brownian ones (indeed, the ``annealed version'' will just give rise to the Brownian excursions away from $x$ by the Brownian loops, so that they always will have a finite quadratic variation), but it can be viewed as a Brownian motion with a rather complicated drift that is determined by its ``remaining local time function'' (the initial local time profile minus the local time of the past of the process). This updated remain profile  will decrease when the process goes along.
 This process is well-defined and the convergence to this continuous process holds true up to the possible first time at which the local time at the current position of the process hits $0$. Note that after that time, the process can not come back to this point anymore (as it would otherwise create a positive local time at this point, which is not possible), so that it has ``to decide'' in which direction to continue. Since this decision is ``locally made'' based on the local time profile, it is quite natural to  guess that it will choose one of the two directions with equal probability.

 Another direct way to describe this process is of course just to start with a Brownian loop-soup on the cable graph, to consider all the Brownian loops that go through $x$, to order them weighted by their respective local time at $x$ and to concatenate them.
 A first remark is that for each edge adjacent to $x$, it will then necessarily happen that at some time during this exploration, the process will be on that edge and the remaining local time at the current position of this process hits $0$ (and this will happens before having discovered all the excursions away from $x$).  Indeed, if this was not the case, then since the process would exhaust all the excursions away from $x$, it will at least have to reveal some of the zeroes of the squared GFF with $0$ boundary condition at $x$ obtained by removing all the excursions away from $x$.  In particular, this shows that there will almost surely be a first time $T$ at which the local time profile at $T$ disconnects $x$ from $y$ (since $T$ can not be larger than the last time at which the process has created a zero on each of the edges adjacent to $x$).
 At that time, the process (as defined above) will not be able to choose one of the two available options. Indeed, given that it eventually has to come back to $x$, it has to choose to go ``towards $x$''.
 On the other hand, if we were to force it to go ``towards'' $y$, we would end up adding one single excursion from $x$ to $y$ instead.

 More precisely, suppose that at this disconnection time $T$, the process has done $k$ excursions between $x$ and $y$ (so for instance, if $k=3$, it has visited $y$, then $x$, then $y$ but not been back to $x$ yet). Then, we see that depending on whether the process goes back to $x$ or to $y$, one ends up with a total of $k$ or $k+1$ excursions. Furthermore,  when the process goes back to $x$, then the total number of excursions is even, and the process is indeed the discovery of the loops, while if the process is forced to go towards $y$, it discovers an odd number of excursions. So, the final forcing towards $y$ at the disconnection time changes the parity of the number of excursions, and it is indeed one natural candidate for the explicit bijection.

 In particular, this will show that the parity switching of the loop-soup within each single edge (that is then used along loops in Proposition \ref {Pnew} and along paths for Theorem \ref {mainthm}) can be realized in such a way that the actual number of crossing of the edge changes by $\pm 1$ (and this still does not affect the total occupation time).

Again, we refer to the forthcoming work \cite {LuW} for more details on this approach and its consequences.

\section {Immediate consequences for incipient infinite clusters, interlacements and related questions}
\label {S2}

We now discuss some of the  direct consequences/reformulations of the switching properties that heuristically correspond to their version when one of the two points $x$ or $y$ is sent to infinity. So, this will be about the infinite infinite cluster measure (as already stated in Theorem \ref {thmIIC}) and the combination of loop-soups with random interlacements.
We will address  some related points also in Section \ref {S6}.

In all of this section, we will again consider excursions between different points ``with their end-points removed'' (just to ensure the exact identity between the different descriptions of clusters in the case where one of the marked points is conditioned to be a boundary points of a cluster).
We will first discuss results in $\Z^d$ and then discuss the case of more general graphs. Finally, we will make some comments on the massive case.

\subsection {Incipient infinite cluster in $\Z^d$}
\label {S2.1}

Let us first complement Theorem \ref {thmIIC} with the corresponding statements for
conditioned versions of the IIC  in $\Z^d$ for $d \ge 3$:
\begin {proposition}[Conditioned versions of the IIC measure]
\label {propIIC}
\begin {enumerate}
 \item The limit when $y \to \infty$ of the law of the square of the cable-graph GFF conditioned on $x$ being a boundary point of the cluster containing $y$ exists. The law of this ``incipient infinite cluster measure conditioned on $x$ being a boundary point of the incipient cluster'' is simply obtained via the occupation time of the overlay of a Brownian loop-soup in $\Z^d \setminus \{x \}$ with one independent Brownian excursion from $x$ to infinity.
 \item The limit when $y \to \infty$ of the law of the square of the cable-graph GFF conditioned on $\Lambda (x) = a^2$ and $x \leftrightarrow y$ exists. The law of this incipient infinite cluster measure conditioned on having an occupation time $a^2$ at $x$, is described by the occupation time of the overlay of a Brownian loop-soup in $\Z^d$ conditioned to have occupation time $a^2$ at $x$  with one independent Brownian excursion from $x$ to infinity.
\end {enumerate}
\end {proposition}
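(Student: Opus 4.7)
The plan is to parallel the derivation of Theorem \ref{thmIIC}: apply the switching identity of Theorem \ref{mainthm} at fixed $y$, use Dynkin's isomorphism to repackage the decomposition so that the ``excursion-joining-$x$-and-$y$'' input is a single Brownian excursion, and then let $y\to\infty$. In both parts the limit is understood as weak convergence of the restriction of the occupation time field to any metric ball of radius $K$ around the origin (exactly as in Theorem \ref{thmIIC}).

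For part~(1), I use the variant of Theorem \ref{mainthm} described in Comment (1) of the introduction: letting $a\to 0$ with $b>0$ fixed gives the conditional law of $\Lambda$ given that $x$ is a boundary point of the cluster containing $y$ and that $\Lambda(y)=b^2$, as the sum of (a) a loop-soup in $G\setminus\{x,y\}$, (b) an independent Poisson point process of excursions away from $y$ in $G\setminus\{x\}$ with intensity $b^2\nu_{y,y;G\setminus\{x\}}$, and (c) exactly one independent Brownian excursion from $x$ to $y$. Dynkin's isomorphism at $y$, applied within the cable graph $G\setminus\{x\}$, recombines (a)+(b) into a loop-soup in $G\setminus\{x\}$ conditioned on $\Lambda(y)=b^2$. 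Integrating over $b^2$ against its conditional distribution on the event $\{x \text{ is a boundary point of the cluster containing } y\}$ then leaves a loop-soup in $G\setminus\{x\}$ (with its unconstrained distribution at $y$) together with one Brownian excursion from $x$ to $y$ whose law is reweighted by the factor arising from this integration.

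For part~(2), I apply Theorem \ref{mainthm} directly with $\Lambda(x)=a^2$ fixed and $\Lambda(y)=b^2$, and then use Dynkin's isomorphism at both $x$ and $y$ to combine the first three items (loop-soup in $G\setminus\{x,y\}$ plus the two Poisson point processes of excursions away from $x$ and away from $y$) into a loop-soup in $G$ conditioned on $\Lambda(x)=a^2$ and $\Lambda(y)=b^2$. Integrating over $b^2$ with its conditional law given $\Lambda(x)=a^2$ and $x\leftrightarrow y$, and observing that the fourth item's Poisson number (conditioned odd, with total intensity $ab|\nu_{x,y;G}|\to 0$ as $y\to\infty$) collapses to exactly one excursion with probability tending to one, yields a loop-soup in $G$ conditioned on $\Lambda(x)=a^2$ plus one Brownian excursion from $x$ to $y$.

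The last step is $y\to\infty$ in $\Z^d$ for $d\ge 3$. On any ball of radius $K$, once $\|y\|>K$ the prohibition on visiting $y$ for the loops is inactive, so the loop-soup in $\Z^d\setminus\{x,y\}$ (respectively in $\Z^d$ conditioned on $\Lambda(x)=a^2$) restricted to the ball agrees with the loop-soup in $\Z^d\setminus\{x\}$ (respectively in $\Z^d$ conditioned on $\Lambda(x)=a^2$). The probability measure on the Brownian excursion from $x$ to $y$, interpreted via the Doob $h$-transform of Brownian motion by $G(\cdot,y)$, converges weakly on compacts to the probability measure on the Brownian excursion from $x$ to infinity (after last visit of $x$) by the standard Green's-function asymptotics $G(z,y)/G(x,y)\to 1$ as $y\to\infty$ uniformly for $z$ in compacts. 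The main obstacle is the cancellation between the reweighting factor produced by the $b^2$-integration and the renormalization of the excursion measure from $x$ to $y$ as $y\to\infty$; this is what ensures that the limit is the clean overlay claimed in the proposition, with no surviving reweighting of the loop-soup (since, in contrast to Theorem \ref{thmIIC}, we fix $\Lambda(x)$ rather than integrate it, so no $\sqrt{\Lambda(x)}$ Radon-Nikodym factor appears). Given Theorem \ref{thmIIC}, all the tightness and local weak convergence steps here are entirely analogous, and in fact slightly simpler.
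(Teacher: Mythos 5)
Your proposal follows the same route as the paper's own (essentially one-line) proof: apply the switching identity at finite $y$ and invoke the classical fact that the normalized excursion measure from $x$ to $y$ converges, on compacts, to the excursion measure from $x$ to infinity. It is correct in substance, but two of your intermediate identities are misstated --- harmlessly, since both discrepancies vanish as $y\to\infty$, but they should be repaired or deferred to the limit. (i) In part~(2), the first three items of Theorem \ref{mainthm} do \emph{not} recombine into the loop-soup in $G$ conditioned on $\Lambda(x)=a^2$ and $\Lambda(y)=b^2$: by the parity lemma (Lemma \ref{parity1}), that conditioned loop-soup equals the first three items \emph{plus} an independent Poisson point process of $x$--$y$ excursions with intensity $ab\,\nu_{x,y,G}$ conditioned to be even, so your identity is off by an even-conditioned component whose intensity only tends to $0$ as $y\to\infty$. (ii) In part~(1), the factor produced by the $b^2$-integration is a reweighting of the loop-soup in $G\setminus\{x\}$ by $\sqrt{\Lambda(y)}$ (the conditional probability of the boundary-point event given $\Lambda(y)=b^2$ is asymptotically proportional to $b$), not a reweighting of the excursion, whose normalized law no longer depends on $b$ once $a\to 0$; this factor disappears in the limit because $\Lambda(y)$ decorrelates from the field restricted to the ball of radius $K$ around $x$ (plus uniform integrability of $\sqrt{\Lambda(y)}$), not through a cancellation against the excursion normalization. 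Finally, note that the Poisson point process of excursions away from $y$ does contribute to the field near $x$ at finite $y$; its contribution vanishes because the expected occupation density it creates at a point $z$ near $x$ is of order $b^2\bigl(G(z,y)/G(y,y)\bigr)^2\to 0$, which is a small extra step not covered by the observation that the prohibition on visiting $y$ becomes inactive on compacts.
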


Note that in all these results, one can obtain the corresponding laws of the conditioned GFF itself, by simply choosing a sign independently for each of the obtained sign clusters.

\medbreak

 Constructing incipient infinite clusters for critical percolation models has a long history, starting (on the mathematical side) with Kesten's \cite {K}. It is striking that for this Brownian loop-percolation model,  one can get this construction in such an effortless way, as well as a nice simple description of the IIC (somewhat reminiscent of the IIC in a tree, with a collection of critical trees attached to a backbone).

\medbreak
Proposition \ref {propIIC} and Theorem \ref {thmIIC}
 follow directly from the switching property and the fact that the law of the excursion from $x$ to $y$ converges as $y \to \infty$  to the law of the excursion from $x$ to infinity (in the sense that the law of the former up to its last exit time of the ball of radius $K$ around $x$  converges to the law of the latter up to its last exit time of this ball), which is a classical fact in $\Z^d$. Note that the reweighing in Theorem \ref {thmIIC} is easy to define (the unweighted law  of the local time at the origin being a squared Gaussian random variable). It is also easy to check that the limiting measures in Proposition \ref {propIIC} can be viewed as conditioned versions of the one described in Theorem~\ref {IIC}.

\begin {remark}
Let us comment on recent closely related works: As we briefly mentioned in the introduction, the existence of the incipient infinite cluster measure for loop-soup occupation times in $\Z^d$ (for  all $d \ge 3$ except for $d=6$) has in fact been very recently obtained by Cai and Ding in \cite {CD3} by a rather less direct route (with tricky and astute dimension-dependent estimates and renormalization arguments that caused their proofs not to cover the case $d=6$). So, the switching property provides an alternative direct proof of the existence of the measure that works also for $d=6$ (and as we shall see, for a large class of transient graphs) and it gives also a new explicit very simple and workable description of the limiting measure.

It should  be stressed that \cite {CD3} contains also other characterizations of this IIC measure that do not follow so directly from the switching property.  It is for instance also shown in \cite {CD3} that the limiting procedure ``conditioning on $0 \leftrightarrow x$ and letting $x \to \infty$'' (which is the one that of Theorem~\ref {IIC} and is the most naturally related to the switching property) has the same limit as ``conditioning the cluster containing $0$ not to be contained in the ball of radius $n$ and letting $n \to \infty$''. See also aspects of the discussion in  Section \ref {Se6}.
\end {remark}

\begin {remark}
With this explicit description of the incipient infinite cluster in hand, it is  possible to revisit aspects of the proofs from \cite {GN} about the Alexander-Orbach conjecture in high dimensions (i.e., how does ``Brownian motion on the incipient infinite cluster behave?''). Note that the statements in \cite {GN} are about any ``subsequential IIC measure'' since at the time, the incipient infinite cluster measure was not known to be unique). We refer to the upcoming paper \cite {CW} for further details on such aspects in high dimensions.
\end {remark}

\subsection {Interlacements and loop-soups}
\label {S2.2}

The cable-graph interlacement in $\Z^d$ for $d \ge 3$ can be viewed as a Poissonian collection of Brownian excursions away from the ``point at infinity'' -- see \cite {Sz2}. The study of the geometry of the interlacements (and their complement in $\Z^d$) initiated by Sznitman has been a very active research topic in recent years.

One way to define the normalization of the intensity of the interlacement measure ${\mathcal I}$ (which is an infinite measure on bi-infinite Brownian paths that come from and go to infinity) is to say that the for Poisson point process with intensity  $\alpha^2 {\mathcal I}$ (we will call this an $\alpha$-interlacement), the mean occupation time of every point $x$ on edges of the cable graph is $\alpha^2$. Another choice of the normalization would then only change the values of the constant $C_0$ appearing in the proposition.

Consider such an $\alpha$-interlacement and an independent (critical) Brownian loop-soup on the cable-graph of $\Z^d$ and define $I=I(\alpha)$ to be the union of the two. Then, the analogue of Dynkin's isomorphism, known as Sznitman's isomorphism \cite {Sz1} states that the occupation times $\Lambda_{I(\alpha)}$ is distributed as $(\alpha + \Gamma)^2$ (where $\Gamma$ is a GFF in $\Z^d$). In other words, $\alpha$ plays the role of the ``value at infinity of the GFF''.

The following result should therefore  be hardly surprising, as it is a carbon copy of our main switching theorem, where one replaces the second point $y$ by infinity (the excursions away from $y$ become an interlacement, the excursions joining $x$ and $y$ become excursions from $x$ to infinity). The constant $C_0$ appearing in the statement can be easily determined explicitly from the Green's functions in $\Z^d$.

\begin {proposition}
\label {interlacementresult}
Conditionally on $x \leftrightarrow \infty$ in $I(\alpha)$ and on $\Lambda_{I(\alpha)}(x) = a^2  >0 $, the field $\Lambda_{I(\alpha)}$ is distributed as the occupation times of the overlay of:
\begin {itemize}
 \item A loop-soup in $\Z^d \setminus \{ x \}$.
 \item A Poisson point process of excursions away from $x$ with intensity $a^2\nu_{x,x, \Z^d}$.
 \item An $\alpha$-interlacement  conditioned not to hit $x$.
 \item A Poisson point process of excursions from
 $x$ to infinity with intensity $a \alpha C_0$ times the excursion probability measure from $0$ to $\infty$, which is {\sl conditioned to have an odd number of such excursions}.
\end {itemize}
\end {proposition}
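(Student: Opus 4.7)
The plan is to derive Proposition~\ref{interlacementresult} as a direct $y \to \infty$ limit of Theorem~\ref{mainthm} in $\Z^d$. Sznitman's isomorphism identifies the occupation time of $I(\alpha)$ with $(\alpha+\Gamma)^2$, which itself can be viewed as a limit of conditioned cable-graph GFFs where the ``value at infinity'' is set to $\alpha$. Concretely, I fix a sequence $y_N\to\infty$ in $\Z^d$ and choose $b_N>0$ growing like $\|y_N\|^{d-2}$ (calibrated through the Green's function asymptotic $G_{\Z^d}(0,y)\sim c_d/\|y\|^{d-2}$) so that the law of a GFF conditioned on $\Gamma(y_N)=b_N$ converges weakly on compact sets to the $\alpha$-shifted GFF appearing in Sznitman's isomorphism.

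With this calibration, I would apply Theorem~\ref{mainthm} to the pair $(x,y_N)$, conditioned on $\Lambda(x)=a^2$, $\Lambda(y_N)=b_N^2$ and $x\leftrightarrow y_N$, and pass to the $N\to\infty$ limit in both sides. On the left, the limit is by construction the occupation time of $I(\alpha)$ conditioned on $\Lambda_{I(\alpha)}(x)=a^2$ together with the event $x\leftrightarrow\infty$, which is the limit of $\{x\leftrightarrow y_N\}$. On the right, the four independent inputs of Theorem~\ref{mainthm} pass to the limit as follows: the loop-soup in $\Z^d\setminus\{x,y_N\}$ converges to the loop-soup in $\Z^d\setminus\{x\}$; the Poisson excursion process away from $x$ with intensity $a^2\nu_{x,x,\Z^d\setminus\{y_N\}}$ converges to the analogous process in $\Z^d\setminus\{x\}$; the Poisson excursion process away from $y_N$ with intensity $b_N^2\nu_{y_N,y_N,\Z^d\setminus\{x\}}$ converges to an $\alpha$-interlacement conditioned not to hit $x$; and the Poisson process of excursions joining $x$ and $y_N$ with intensity $ab_N\nu_{x,y_N,\Z^d}$ conditioned to be odd converges to a Poisson process of excursions from $x$ to infinity with intensity $a\alpha C_0$ conditioned to be odd, where $C_0$ is the limit of $b_N\,\|\nu_{x,y_N,\Z^d}\|/\alpha$ read off from the Green's function asymptotics.

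The main obstacle is the third item: identifying the limit of the Poisson process of excursions away from $y_N$ with the appropriately scaled intensity as the $\alpha$-interlacement conditioned not to hit $x$. This is essentially a reformulation of Sznitman's construction of cable-graph interlacements \cite{Sz2}, with the benign extra constraint of avoiding $x$ (which can either be imposed directly in the excursion measure or enforced a posteriori by absorbing the loops hitting $x$ into the Poisson process of excursions away from $x$). A cleaner variant that bypasses this verification altogether is to add ``infinity'' as a formal boundary point to the cable graph of $\Z^d$ (legitimate by transience for $d\ge 3$), in which case an $\alpha$-interlacement is by definition the Poisson process of excursions away from $\infty$ with intensity $\alpha^2\nu_{\infty,\infty,\Z^d}$, and Theorem~\ref{mainthm} applies verbatim to the pair of marked points $(x,\infty)$. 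The parity conditioning in the fourth item is stable under the limit because the total mass $ab_N\|\nu_{x,y_N,\Z^d}\|$ stays bounded and converges to $a\alpha C_0$, so the number of joining excursions remains tight and the odd-parity conditioning descends to the limit; this is the interlacement analogue of the $a\to 0$ passage already discussed after Theorem~\ref{thm1}.
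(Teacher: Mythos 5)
Your proposal is correct and follows essentially the same route as the paper: Proposition \ref{interlacementresult} is obtained as the limit of Theorem \ref{mainthm} with the second marked point sent to infinity and $b$ calibrated via the Green's function asymptotics so that the conditioned field converges to the $\alpha$-shifted GFF of Sznitman's isomorphism. The paper implements this by applying the theorem on a compactified finite box $G_N$ with an added boundary point $\partial$ (with edge lengths matching the exterior transition probabilities), where the trace of the $\alpha$-interlacement inside the box is \emph{exactly} the Poisson process of excursions away from $\partial$ -- so the Sznitman-type convergence you flag as the main obstacle is built in by construction; this is precisely your ``cleaner variant''.
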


So in particular, regardless of $a$, the sign-cluster containing $x$ does contain a Brownian excursion from $x$ to infinity. Note that the union of the first two items give the occupation time of a loop-soup conditioned on its occupation time at $x$.

The even/odd switching mystery is again quite apparent in this particular incarnation of the switching property. Indeed, in an interlacement, the Brownian paths ``to infinity'' come in pairs (one for each ``side'' of the bi-infinite Brownian paths from the interlacement), while the above description (once conditioned by $x \leftrightarrow \infty$) ends up with an odd number of paths to infinity.

\begin {proof}
One very natural way to approximate the infinite lattice $\Z^d$ is to consider the cable-graph $G_N$ defined as follows: It is the box $[-N,N]^d$ with one additional point $\partial$  ``at infinity''. To all the edges (with unit length) of $\Lambda_N$, one adds edges between all pairs of points on the boundary $\lambda_N$ of $\Lambda_N$ as well as edges between the points of $\lambda_N$ and $\partial$, where the ``length'' of these edges is chosen to fit the transition probabilities for the Brownian excursions in the complement of $\Lambda_N \cup \{ \partial \}$. Then, the trace inside of $\Lambda_N$ of the loop-soup and the interlacements (in $\Z^d$) correspond exactly to the trace inside $\Lambda_N$ of the loop-soup and the excursions away from $\partial$ in $G_N$. In particular, we can note that in the large $N$ limit, the event that $0$ is in an infinite component of $I(\alpha)$ will correspond (up to an event of probability that goes to $0$ as $N \to \infty$) the event that $0$ is connected to $\partial$ for the loop-soup and excursions in $G_N$.

One then just applies Theorem \ref {mainthm} (or rather its version on compact graphs, see Remark \ref {compact}) to the cable-graph $G_N$ with $y=\partial$ and $b=b(N,\alpha)$ and lets $N \to \infty$, which readily leads to the above proposition.
\end {proof}

This proposition has also an ``integrated version'' where one does not condition on $\Lambda(x)$ just as in the case of two points $x$ and $y$. We leave the details to the reader.

\medbreak

One can also consider the two following limiting cases:

\begin {itemize}
 \item [(a)]
In the limit when $\alpha \to 0$, one recovers the IIC measure (i.e., the overlay of a loop-soup with one excursion to infinity). In other words (but this comes here more as a consequence of the explicit description of both the IIC measure and the conditioned ``interlacement + loop-soup'' measure): The IIC measure is the limit as $\alpha \to 0+$ of the measure of the overlay of an interlacement with intensity $\alpha$ with an independent loop-soup, conditioned on $0$ being connected to infinity (i.e. to one of the excursions away from infinity).

\item [(b)]
In the limit when $a \to 0$, one gets the following result valid for all positive $\alpha$:
\begin {proposition}
 Conditionally on the event that $x$ is on {\em the boundary of} the unbounded connected component of the union of the $\alpha$-interlacement with a loop-soup, the law of the occupation times is that of the sum of the occupation times of an $\alpha$-interlacement conditioned not to visit $x$, an independent loop-soup conditioned not to intersect $x$ and one independent Brownian excursion from $x$ to $\infty$.
\end {proposition}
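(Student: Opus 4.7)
The plan is to derive this proposition directly as the $a \to 0^+$ limit of Proposition \ref{interlacementresult}, in complete parallel with the way Theorem \ref{thm1} is obtained from Theorem \ref{mainthm} in Comment (1) of the introduction. The event that $x$ is on the boundary of the unbounded cluster of $I(\alpha)$ has zero probability, and the natural way to make sense of the corresponding conditioning is via the weak limit of the conditional laws given $\{\Lambda_{I(\alpha)}(x) = a^2\} \cap \{x \leftrightarrow \infty\}$ as $a \to 0^+$: by the Gaussian/Brownian structure of the field along any edge adjacent to $x$ (and the fact that $\{x \leftrightarrow \infty\}$ forces $\Lambda_{I(\alpha)}(x)>0$), the conditional density of $\Lambda_{I(\alpha)}(x)$ on $\{x \leftrightarrow \infty\}$ is continuous at $0$, so this limit is well-posed just as in the Bessel-type conditioning described after the statement of Theorem \ref{thm1}.

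Next I would inspect each of the four independent inputs in Proposition \ref{interlacementresult} as $a \to 0^+$. The loop-soup in $\Z^d \setminus \{x\}$ and the $\alpha$-interlacement conditioned not to visit $x$ do not depend on $a$, so they pass to the limit unchanged and are exactly the two ``unconditioned'' inputs described in the statement to prove. The Poisson point process of excursions away from $x$ with intensity $a^2 \nu_{x,x,\Z^d}$ has total intensity tending to $0$; using the standard fact that $\nu_{x,x,\Z^d}(\ell_e(z))= \phi_x(z)^2$ is finite on any compact, one sees that its occupation time field vanishes in probability (uniformly on compacts), so this input disappears in the limit. Finally, the Poisson point process of excursions joining $x$ to infinity has intensity $a\alpha C_0 \to 0$ and is conditioned to have an odd number of points; the probability that this odd number exceeds $1$ is $O((a\alpha C_0)^2)$, so in the limit it reduces to a single Brownian excursion from $x$ to $\infty$, distributed according to the normalized excursion probability measure already used in Theorem \ref{thmIIC}.

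Assembling the three surviving inputs gives exactly the description in the statement: the overlay of an $\alpha$-interlacement conditioned not to visit $x$, an independent loop-soup conditioned not to intersect $x$, and one independent Brownian excursion from $x$ to $\infty$. The main obstacle in this plan is really just the first step, namely the rigorous justification that conditioning on $\{\Lambda_{I(\alpha)}(x)=a^2,\, x \leftrightarrow \infty\}$ does converge weakly as $a \to 0^+$ to a probability measure that can legitimately be called ``the law conditioned on $x$ being a boundary point of the unbounded component''. Once this is in place, the identification of the limit is entirely mechanical from the structure of the four inputs, so there is no extra combinatorial content beyond what Proposition \ref{interlacementresult} already supplies. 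I would handle this justification as for Theorem \ref{thm1}, by reducing to the one-dimensional continuity statement on the edge(s) at $x$, which is a direct consequence of the Brownian description of the cable-graph GFF near a vertex.
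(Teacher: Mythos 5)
Your proposal is correct and follows essentially the same route as the paper, which obtains this proposition precisely as the $a \to 0^+$ limit of Proposition \ref{interlacementresult} (in parallel with the derivation of Theorem \ref{thm1} from Theorem \ref{thm2} sketched in the introduction): the two $a$-independent inputs survive, the excursions away from $x$ with intensity $a^2\nu_{x,x}$ vanish, and the odd-conditioned Poisson number of excursions to infinity degenerates to a single excursion. Your additional care about justifying the meaning of the zero-probability conditioning via the Brownian/Bessel structure near $x$ is consistent with how the paper treats such conditionings throughout.
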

\end {itemize}
These results can be related to aspects of recent nice work on various ``near-critical'' cable-graph loop-soup models, see for instance Drewitz, Pr\'evost and Rodriguez \cite {DPR1}.

\subsection {IIC measures on other graphs}
\label {SLiouville}
We see that in the previous results on the IIC, the only feature of $\Z^d$ that we used was that the excursion probability measure from $x$ to $y$ was converging as $y \to \infty$ to some probability measure on paths from $x$ to infinity (that we then called the measure on excursions from $x$ to infinity).
This shows that it is possible to generalize the IIC measure result to general transient infinite graphs (possibly with a boundary) as follows:

\begin {proposition}[IIC for general graphs]
Suppose that $x$ is fixed and consider a sequence $y_n$ that goes to infinity, with the property that the probability measure on excursions from $x$ to $y_n$ converges as $n \to \infty$ to a probability measure on paths ${\mathcal P}$ from $x$ to infinity (in the sense that for all $K$, their law up to the last exit of the metric ball of radius $K$ around $x$ converges weakly).
Then, the law of the loop-soup occupation times (for a loop-soup on $G$) conditioned on $x \leftrightarrow y_n$ will converge (in the same sense as in Theorem \ref {thmIIC}) to the occupation of the overlay of a loop-soup with distribution reweighed by $\sqrt {\Lambda (x)}$ with an independent path chosen according to ${\mathcal P}$.
\end {proposition}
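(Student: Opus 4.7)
The plan is to deduce the proposition from Theorem \ref{mainthm} applied at $(x, y_n)$, then integrate out the conditioning on $\Lambda(x)$ and $\Lambda(y_n)$ and take the limit $n \to \infty$ using the assumed convergence of the normalized excursion measure. Since weak convergence of the occupation times is tested on compact subsets, I fix a compact neighborhood $B$ of $x$ in $G$ and work throughout with the law of $\Lambda|_B$.

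By Theorem \ref{mainthm}, conditionally on $\Lambda(x) = a^2$, $\Lambda(y_n) = b^2$ and $x \leftrightarrow y_n$, $\Lambda$ is the sum of the occupation times of four independent inputs, each of which has a natural $n \to \infty$ limit when restricted to $B$. The loop-soup in $G \setminus \{x, y_n\}$ and the Poisson excursions away from $x$ in $G \setminus \{y_n\}$ converge in total variation to a loop-soup in $G \setminus \{x\}$ and to a Poisson point process of excursions from $x$ in $G$ of intensity $a^2 \nu_{x,x,G}$ respectively, because the only loops or excursions that differ must visit $y_n$ and hence travel between $y_n$ and $B$, an event of vanishing probability as $y_n \to \infty$. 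The excursions away from $y_n$ become supported arbitrarily far from $B$ and so do not affect $\Lambda|_B$ in the limit. Finally, the total mass $M(y_n)$ of $\nu_{x, y_n, G}$ tends to zero by transience, so the Poisson number of $x$-to-$y_n$ excursions (which has mean $abM(y_n)$ and is conditioned to be odd) equals exactly one with probability $1 - O(M(y_n)^2)$, and the law of that single excursion, being the normalization of $\nu_{x, y_n, G}$, converges to $\mathcal{P}$ by assumption. The first two inputs together produce the occupation time of an unconditioned loop-soup on $G$ conditioned on $\Lambda(x) = a^2$, by the standard excursion decomposition.

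Next I would integrate over the conditional joint law of $(\Lambda(x), \Lambda(y_n))$ given $x \leftrightarrow y_n$. Using the explicit formula $P(x \leftrightarrow y_n \mid \Lambda(x) = a^2, \Lambda(y_n) = b^2) = \tanh(abM(y_n))$ derived in Section \ref{SD}, together with $\tanh(t) \sim t$ as $t \to 0$, the joint density of $(|\Gamma(x)|, |\Gamma(y_n)|)$ on $\{x \leftrightarrow y_n\}$ divided by $M(y_n)$ is asymptotically proportional to $ab$ times the unconditional joint density. Since $G(x, y_n) \to 0$ on a transient graph, this joint density factorizes asymptotically; integrating against the $b$-marginal only affects inputs that disappear from $B$ in the limit and thus contributes merely to the overall normalization. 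The remaining factor $a$, interpreted as a density in $\Lambda(x) = a^2$ rather than in $a = |\Gamma(x)|$, is exactly the $\sqrt{\Lambda(x)}$ Radon--Nikodym reweighting appearing in the statement.

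The main obstacle I foresee is justifying the exchange of the limit $n \to \infty$ with the $(a,b)$-integration. Uniform integrability is supplied by the elementary bound $\tanh(abM(y_n)) \le abM(y_n)$ together with standard Gaussian tail estimates on $|\Gamma(x)|$ and $|\Gamma(y_n)|$. Once this is in place, combining the four conditional limits with the limit of the conditional joint $(\Lambda(x), \Lambda(y_n))$-law yields exactly the claimed description: the occupation time of a $\sqrt{\Lambda(x)}$-reweighted loop-soup on $G$ overlaid with one independent excursion from $x$ to infinity distributed as $\mathcal{P}$.
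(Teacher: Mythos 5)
Your proposal is correct and follows essentially the same route as the paper: the paper obtains this proposition (like Theorem \ref{thmIIC}) directly from the switching property of Theorem \ref{mainthm} combined with the assumed convergence of the normalized excursion measures from $x$ to $y_n$, exactly as you do. Your write-up merely makes explicit the intermediate steps (the vanishing on compacts of the inputs attached to $y_n$, the fact that the odd-conditioned Poisson number of $x$-to-$y_n$ excursions is $1$ with probability tending to $1$, and the $\tanh(abM)\sim abM$ computation producing the $\sqrt{\Lambda(x)}$ reweighting) that the paper treats as immediate.
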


The similar statements for the other conditionings hold as well. This can then be used in different ways:
\begin {itemize}
 \item If the law ${\mathcal P}$ does not depend on the choice of the sequence $y_n$ that goes to infinity (this is for instance the case in $\Z^d$), we get a unique IIC measure.
 \item If the law ${\mathcal P}$ does depend on the choice of the sequence $y_n$ (this is of course not unrelated to the notions of Poisson boundary or the Liouville property), then one gets a family of different IIC measures. For instance, if the cable-graph is a transitive hyperbolic graph (where loops will actually typically be quite small), one ends up with infinitely many IIC measures (one with each point on the boundary) -- and this leads to a picture a bit closer to the IIC Bernoulli percolation measures on trees.
\end {itemize}

Similar ideas can be also used in the cases of interlacements+loop-soups.

\subsection {Comments on massive loop-soups}
\label {Smassive}

Recall the observation in the introduction that the switching identities work also for Brownian motion with killing rates. This includes the case of constant killing rates, i.e., the massive loop-soups where loops $\gamma$ from an ordinary Brownian loop-soup on the cable-graph are removed independently with a probability $1- \exp ( - m  T(\gamma))$ (where $T(\gamma)$ denotes the time-length of the loop). The corresponding occupation time $\Lambda$ is then the square of a centered Gaussian Field, but with correlation function given by the massive Green's Function $G_m$ instead of $G$. The last object involved in the switching property is then the massive excursion measure on excursions $\eta$ between two points $x$ and $y$, which is again just the usual Brownian excursion measure weighed by $\exp (- m T(\eta))$.

In many respects, the massive loop-soup in $\Z^d$ is to the normal loop-soup what a near-critical model is to a critical model. It looks the same at small scale but behaves in a subcritical way at large scale. The switching property provides for free features that are sometimes quite hard and technically involved when one studies subcritical models. For instance, we see that if we fix $m$ and condition on the origin being connected to $y_n$ (and then let $y_n \to \infty$ as $n \to \infty$), the (typically unique when $n \to \infty$) Brownian excursion appearing in the switching will be chosen according to the massive Brownian excursion measure, reweighed to be a probability measure. It is well-known how these excursion trajectories converge to straight lines when $n \to \infty$ with good control of the transversal fluctuations.
One can also use standard large/moderate deviations results when $m=m(n) \to 0$ sufficiently slowly. So, one not only has the description of clusters containing two far-away points in the usual critical loop-soup, but also the description of this entire window of near-critical regimes that come from the massive cases (for instance, one can see precisely how isotropy is lost simply by looking at the behaviour of the massive excursion measure), which is again something that is typically very hard to obtain for other percolation models.

\section {Some first consequences for $n$-point functions and other features}
\label {Se6}
 \label {S6}

As already illustrated in Section \ref {S2}, this switching property provides a useful additional tool to study various aspects of the large scale properties of these loop-soup clusters on various cable-graphs. It seems possible  to revisit aspects of the recent  papers on the topic, and to derive further results.  The goal of this section is to describe the type of considerations that become available using the switching property. More detailed and further results will be discussed in forthcoming work.

One general remark is that up to the present paper, the study of large sign clusters in $\Z^d$ was focusing first on the derivation of connection probabilities between points, or between points and sets in the case of one-arm probabilities. For instance, a large part of the recent papers \cite {DPR1,DPR2,DPR3,DW,CD1,CD2,CD3,GJ} is devoted  on establishing bounds for the  ``one-arm'' estimates, i.e. for the behaviour of the probability of the event $A_N$ that the sign cluster containing a given point is not contained in the ball of radius $N$ around this point when $N \to \infty$). Those proofs were mostly building on Lupu's two-point function (and in some case on an extension from \cite {LW3}) on the one hand, and on
insightful delicate explorations of the GFF, involving very non-trivial renormalization arguments.
With the multiple-point function estimates in hand (that they then subsequently obtain), it is then possible (see in particular very recent papers \cite {DPR3,CD3,GJ}) to extract information about the large-scale properties
of ``typical'' large clusters. This last step (from $n$-point functions to the properties of clusters) is not really specific to this loop-percolation case (see for instance the considerations of \cite {Ai2} for Bernoulli percolation).

In this section, we will only outline possible ways to obtain some of these estimates for multiple-point functions using the switching property.
But we would like to first note that
the switching property does in fact contain in itself more information  (that is then lost when one only keeps the multiple point estimates) because one has the simple description of the conditional distribution of the loop-soup percolation picture given the existence of the connection.
So, for instance, instead of obtaining a description of the large clusters in high dimension that would in some sense converge to integrated super-Brownian excursions in the sense that the uniform measure on the cluster does converge to the natural measure on the ISE tree (which is what the $n$-point estimates ultimately do), one can get a stronger description of the large clusters in terms of coupling with discrete trees on the cable-graph. Another upshot is that one will be able
to discard (in many cases) the existence of other large but skinny clusters (that would not contribute to the multiple-point estimates), a type of result that is out-of-reach via $n$-point estimates. We plan to discuss such new results in \cite {W4,W5,CW}.

\subsection {High dimensions}

As conjectured in \cite {W2} and confirmed in various aspects in the aforementioned papers by Cai and Ding, and by Drewitz, Pr\'evost and Rodriguez, loop-soup percolation in high dimensions (i.e,, $d> 6$) behaves in many ways in the same manner as Bernoulli percolation does (or is conjectured to be -- see \cite {Ai2}).  We will here only show how to derive some of the intermediate statements in these papers using the switching properties.

Consider loop-soup percolation in the cable-graph of $\Z^d$ intersected with the box $\Lambda_N = [-N,N]^d$. Let us start with choosing three points $x$, $y$ and $z$ that are all at distance greater than $cN$ from each other, and try to derive an upper bound for $P[ x \leftrightarrow y \leftrightarrow z]$. If we use the switching property, we immediately see that, when first conditioning on $ x \leftrightarrow y$, we then have to bound the conditional probability that at least one of the three scenarios occurs, when $A$ and $B$ are (correlated) Gaussian random variables:
\begin {itemize}
 \item That for a Poisson point process ${\mathcal P}_1$ of intensity $A^2$ of excursions away from $x$, an independent loop-soup connects $z$ to one of the excursions.
 \item That for a Poisson point process ${\mathcal P}_2$ of intensity $B^2$ of excursions away from $y$, an independent loop-soup connects $z$ to one of the excursions.
 \item That for a Poisson point process ${\mathcal P}_3$ of intensity $|AB|$ of excursions from $x$ to $y$ conditioned to be odd, an independent loop-soup connects $z$ to one of the excursions.
\end {itemize}
It is very easy for each lattice point $w$ to bound the probability that it is in ${\mathcal P} := \cup_{i=1,2,3} {\mathcal P}_i$ i.e., it will be bounded by some constant over $d(w, \{ x,y \})^{d-2}$. We can therefore use a brutal union bound (recalling also that $P [ x \leftrightarrow y ] \le C / |x-y|^{d-2}$ and that for the remaining loop-soup in $\Z^d \setminus \{ x,y \}$, the probability of $z$ being connected to $w$ is also bounded by $C / |w-z|^{d-2}$),
$$
P [ x \leftrightarrow y \leftrightarrow z ]
\le \frac {C}{|x-y|^{d-2}}  \times \sum_{w \in \Z^d} \left( P [d ( w,\cup_{i=1, 2,3}  {\mathcal P}_i ) \le 1 ] \times \frac {C}{|w-z|^{d-2}} \right) $$
which gives a bound of the type $C'N^{6 -2d}$ (the constants $C,C',C'', C_k,\ldots$ in this section do not depend on $N$).
Summing this type of estimate over all points $y, z$ in $\Lambda_N$ leads to
$$ \sum_{y,z \in \Lambda_N}
P [ 0 \leftrightarrow y \leftrightarrow z ] \le C'' N^{6}$$
and more generally, the same argument gives
$$ \sum_{x_1,\ldots ,x_k \in \Lambda_N}
P [ 0 \leftrightarrow x_1 \leftrightarrow x_2 \leftrightarrow \cdots \leftrightarrow x_k  ] \le C_k  N^{-2} N^{4k}.$$
But this now allows to also derive lower bounds for the $n$-point function as well. Indeed, one can now bound for each ${\mathcal P}_3$, the expected number of points on ${\mathcal P}_3$ that $x$ is connected to by using the three-point function bound (this is not specific to this particular percolation model, and has/can be used in other settings where some triangular type conditions hold). This idea then leads to a matching lower bound
 $$ \sum_{x_1,\ldots ,x_k \in \Lambda_N}
P [ 0 \leftrightarrow x_1 \leftrightarrow x_2 \leftrightarrow \cdots \leftrightarrow x_k  ] \ge c_k  N^{-2} N^{4k}, $$
which in turn then allows to obtain some of the results that were expected to hold in these dimensions, for instance using simple estimates in the spirit of \cite {Ai2}.

Again, the statements that follow from such $n$-points estimates are just part of the story since the above switching property based arguments in fact provide more information about the branches appearing in these large percolation clusters. Detailed statements and proofs will appear in \cite {CW,W5}.

\subsection {Intermediate dimensions}

We now make some comments about the simplest case among the intermediate dimensions $d=3,4,5$, namely the case $d=3$ (some of these arguments can be used when $d$ is 4 and 5, but extra work is needed). In the $d=3$ case, one can use the switching property for the exact opposite reason than when $d >6$:  Instead of estimating the probability of the intersection between independent random walks by their mean number of intersection points, we can use the fact that two long random walks have in fact a positive probability of intersecting each other.

Again, we are just going to show one way to get the ball rolling here: Consider four points $x_1, x_2, y_1, y_2$ that are all in $\Lambda_N = [-N,N]^3$ and at distance greater than some constant times $N$ from each other.
Let us first condition on $x_1 \leftrightarrow y_1$. Then, if we apply the switching property , the conditional probability that in the remaining loop-soup in $\Z^3 \setminus \{ x_1, y_1\}$, the two points $x_2$ and $y_2$ are connected is bounded from below by $c / |x_2 -y_2|$. But then, when this happens, the conditional probability that an excursion $e_1$ from $x_1$ to $y_1$ (appearing in the first use of the switching property) and an excursion $e_2$ from $x_2$ to $y_2$ (appearing in the second use of the switching property) do intersect is bounded from below by a constant. We therefore end up with the lower bound
$$ P[ x_1 \leftrightarrow x_2 \leftrightarrow x_3 \leftrightarrow x_4  ] \ge
\frac C {N^2}.$$
The same argument gives a lower bound of the type $C_k' N^{-k}$ for the $2k$-point functions. If one sums over all well-separated $x_1, \ldots, x_{2k}$ in $\Lambda_N$, we end up with
a lower bound
$$ E [ \sum_i |{\mathcal C}_i \cap \Lambda_N|^{2k} ] \ge C_k'' N^{5k},$$
where $({\mathcal C}_i)$ denotes the collection of all loop-soup clusters and $|{\mathcal C}_i|$ their respective Lebesgue measure.

On the other hand, it is also possible to derive matching upper bounds. Let us describe one way to exploit the switching property in that direction.
If we fix $x_2$ and $y_2$, we can estimate the probability $P [ x_1 \leftrightarrow y_1  \leftrightarrow x_2 \leftrightarrow y_2 ]$ by first conditioning on $x_1 \leftrightarrow y_1$ and then on
estimating the probability that (in an independent loop-soup), both $x_2$ and $y_2$ are connected to some of the excursions joining $x_1$ and $y_1$. This means that $x_2$ is connected to one of these excursions and that $y_2$ to one excursion (that could be the same or a different one), but in any case, at most two of the excursions joining $x_1$ to $y_1$ would be involved. We can now notice that the union of two independent excursions can be viewed as a subset of a loop (chosen according to the loop-measure) that is conditioned to go through $x_1$ and $y_1$.
When summing over all $x_1$ and $y_1$ at distance of order $L$ from each other, we end up with the probability that $x_2$ is connected to $y_2$ when one adds one additional loop of size of order $L$ in the loop-soup, and this probability is then comparable to and bounded by $P[x_2 \leftrightarrow y_2]$ itself.
This idea can be used to see that for some $C_k'''$,
$$ E [ \sum_i |{\mathcal C}_i \cap \Lambda_N|^{2k} ] \le C_k''' N^{5k}.$$

The combination of these two matching bounds then yield information about the size of the largest loop-soup clusters in $\Lambda_N$ being $N^{5/2}$. Note that the recent paper \cite {DPR3} obtained precise information about this, namely that the law of this size (renormalized by $N^{5/2}$ is tight (and non-trivial) in the $N \to \infty$ (as well as the corresponding results for $d=4, 5$)  by other means.
Again, this will be further discussed in \cite {W4}.

\subsection {The continuum versions}
\label {Scont}
We conclude with the following point:
In dimensions $d=3,4,5$, one expects (see \cite {W2}) the collection of loop-soup clusters to have a scaling limit when the mesh of the lattice goes to $0$ (just as in $d=2$, see \cite {L2}), that will be coupled with a realization of the Brownian loop-soup in the continuum [and the properties of this scaling limit are then directly related with the large-scale properties of the models on cable-graphs]. In that context, given that the mass of excursions joining two far-away points in the cable-graph goes to $0$ in the scaling limit, one expects that in the continuum, the switching property would hold in a similar way to Theorem~\ref{thm1}: Conditioning two points $x$ and $y$ to be in the same ``continuum sign-cluster'' would amount to just adding one Brownian excursion joining these two points. As we have already briefly mentioned in the introduction, this has actually already be shown to hold in dimension 2 (and a domain with finite Green's function) by J\'ego, Lupu and Qian \cite {JLQ}. We plan to address such ``switching property in the continuum setting results'' in subsequent work \cite {W4}.

\subsection*{Acknowledgments.} This research has been supported by a Research Professorship of the Royal Society. I thank Elie A\"\i d\'ekon, Juhan Aru, Hugo Duminil-Copin, Lorca Heeney, Yves Le Jan, Marcin Lis and Titus Lupu  for their comments on first versions of this paper. Many thanks to Elie A\"\i d\'ekon for pointing out the Pitman-Yor paper \cite {PY}.

\end{document}